\newtheorem{thm}{Theorem}[section]
\newtheorem*{thmA}{Main Theorem A}
\newtheorem*{thmB}{Main Theorem B}
\newtheorem{cor}[thm]{Corollary}
\newtheorem{lem}[thm]{Lemma}
\newtheorem{prop}[thm]{Proposition}
\newtheorem{ques}[thm]{Question}
\theoremstyle{definition}
\newtheorem{defn}[thm]{Definition}
\newtheorem{example}[thm]{Example}
\theoremstyle{remark}
\newtheorem{rem}[thm]{Remark}
\numberwithin{equation}{section}
\def\date{\ifcase\month\or January\or February \or March\or April\or May %
\or June\or July\or August\or September\or October\or November           %
\or December\fi\space\number\day, \number\year}
\font\tenmsb=msbm10        
\font\sevenmsb=msbm7       
\font\teneuf=eufm10        
\font\seveneuf=eufm7       
\def\F{\mathbb{F}}          
\def\N{\mathbb{N}}          
\def\Q{\mathbb{Q}}          
\def\T{\mathbb{T}}          
\def\R{\mathbb{R}}          
\def\Z{\mathbb{Z}}          
\def\dsr{{\mathbb R}}       
\def\dsz{{\mathbb Z}}       
\def\card{\mathop{%
\mathrm{card}}\nolimits}    
\def\ssk{\smallskip}       
\def\msk{\medskip}         
\def\bsk{\bigskip}         
\def\.{{\cdot}}
\def\c #1{{\mathcal{S\hskip-1pt U\hskip-1pt B}}\left(#1\right)}
\def\cbig #1{{\mathcal{S\hskip-1pt U\hskip-1pt B}}\big(#1\big)}
\def\cab #1{{\mathcal{S\hskip-1pt U\hskip-1pt B}_{\mathrm{ab}}}\left(#1\right)}
\newcommand{\gen}[1]{\overline{\langle #1\rangle}}
\def\implies{\hbox{$\Rightarrow$}}
\def\hat{\widehat}
\def\til{\widetilde}
\def\defi{\buildrel\mathrm{def}\over=}
\newcommand{\set}[2]{\left\{#1\, \mid \, #2\right\}}
\def\cg{{\mathcal{S\hskip-.5pt U\hskip-.9pt B}}}
\def\cf{{\mathcal{F}_1}}
\def\cR{{\mathcal{R}_1}}
\def\comp{\mathrm{comp}}
\def\graph{\mathrm{graph}}
\def\id{\mathrm{id}}
\def\bR{{\mathrm b}\R}
\def\bZ{{\mathrm b}\Z}
\def\bR{{\mathrm b}\R}
\title[Approximable locally  compact  groups]{Locally  compact  groups
approximable by subgroups isomorphic to $\Z$ or $\R$}
\author[H. Hamrouni, K. H. Hofmann]{Hatem  Hamrouni and Karl H. Hofmann}
\address{Faculty of Sciences at Sfax.
Department of Mathematics.
Sfax University.
B.P. 1171. 3000 Sfax, Tunisia}
\email{hatemhhamrouni@gmail.com}
\address{Fachbereich Mathematik,
Technische Universit\"at Darmstadt,
Schlossgartenstrasse 7,
Darmstadt 64289, Germany
and
Department of Mathematics,
Tulane University,
New Orleans, LA 70118, USA}
\email{hofmann@mathematik.tu-darmstadt.de}
\keywords{Locally  compact  group, Chabauty  topology,
compact  element, monothetic group, inductively monothetic group,
solenoidal group, iterated limit.\\
\textit{MSC 2010:} 22B05, 54E45.}
\newtheoremstyle{plain}
{10pt} 
{10pt} 
{\itshape} 
{} 
{\bfseries} 
{.} 
{.5em} 
{}
\begin{document}
\date{
      \vskip 20pt}

\begin{abstract} Let $G$ be a  locally compact
topological group, $G_0$ the connected component of
its identity element, and $\comp(G)$ the union of all compact
subgroups. A topological group  will be called
inductively monothetic if any subgroup generated (as
a topological group) by finitely many elements is generated
(as a topological group) by a single element.
The space $\cg(G)$  of all closed subgroups of $G$ carries
a compact Hausdorff topology called the Chabauty topology.
Let $\cf(G)$, respectively, $\cR(G)$, denote the subspace of
all discrete subgroups isomorphic to $\Z$, respectively,
all subgroups isomorphic to $\R$. 
It is shown that a necessary and sufficient
condition for $G\in\overline{\cf(G)}$ to hold is that $G$ is
abelian, and either that
$G\cong \R\times \comp(G)$ and $G/G_0$ is inductively
monothetic, or else that $G$ is discrete and isomorphic to a subgroup
of $\Q$. It is further shown that a necessary and sufficient
condition for $G\in\overline{\cR(G)}$ to hold is that
$G\cong\R\times C$ for a compact connected abelian group $C$.\\
\textit{MSC 2010:} 22B05, 54E45.
\end{abstract}

\maketitle

\vskip-15pt

\centerline{Authors' addresses:}
\smallskip
\hbox{\vtop{\hsize=.45\hsize
Hatem Hamrouni\\ 
Faculty of Sciences at Sfax\\
Department of Mathematics\\
Sfax University\\
B.P. 1171. 3000 Sfax, Tunisia\\
{\tt hatemhhamrouni@gmail.com}}
\hfill\vtop{\hsize=.45\hsize
Karl H. Hofmann\\
Fachbereich Mathematik\\
Technische Universit\"at Darmstadt\\
Schlossgartenstrasse 7\\
Darmstadt 64289, Germany\\
{\tt hofmann@mathematik.tu-darmstadt.de}}}
\smallskip
\centerline{Running title:} 
\centerline{Locally Compact Groups Approximable by $\Z$ and $\R$}

\bigskip

\section{Preface}
\label{s:preface}

The simplest group arising directly from the
activity of counting is the group $\Z$ of integers.
On the other hand, one of the more sophisticated
concepts of group theory is that of a locally compact
topological group; it evolved widely and deeply
since David Hilbert in 1900 posed
the question whether a locally euclidean
topological group might  be parametrized differentiably
so that the group operations become differentiable.
Bringing $\Z$ and locally compact groups together
in a topologically systematic fashion is made possible
by a compact Hausdorff space
$\cg(G)$ attached to a locally compact group $G$
in a very natural fashion, namely, as the set of closed subgroups
endowed with  a suitably defined topology. Since $G$ itself
is a prominent element of $\cg(G)$, we pose and answer
completely the question under which circumstances
$G$ can be approximated in $\cg(G)$ by those subgroups
of $G$ which are isomorphic to $\Z$. A topological parameter
attached to a topological space $X$ is the smallest cardinal
of a basis for the collection of all open subsets;
this parameter is called the weight $w(X)$ of $X$.
If the weight of $X$ is not bigger than the first infinite
cardinal, one says that $X$ satisfies the Second Axiom of
Countability.
Our findings about the approximability of $G$ by
subgroups isomorphic to $\Z$ will show a fact that one
would not expect at first glance: Such an approximabilty does
not impose any bound whatsoever on the weight $w(G)$ of $G$.

\msk

If one ascends from the group $\Z$ of counting numbers to
the group $\R$ of real numbers which permits us to measure lengths
and distances, then the completely analogous question
suggests itself, asking indeed which locally compact groups $G$ can be
approximated in $\cg(G)$ by subgroups isomorphic to $\R$.
Again we answer that question completely and find, that the
answer to the second question yields a simpler structure than
the answer to the first question.

\msk

The answers are described in the Abstract which precedes our text.
  It is relatively immediate that
our discussion will usher us  into the domain of abelian locally compact
groups. But the final  proofs do  lead us more deeply
into the structure of these  groups than one might anticipate
and are, therefore, also longer than expected.
With these remarks, we now turn to the details.

\section{Preliminaries}
\label{s:prelim}

\subsection{Basic concepts and definitions} \label{ss:basics}
 Let  $G$  be  a  locally compact  group and $\comp(G)$ the union
of its compact subgroups.
By  Weil's  Lemma (\cite[Proposition 7.43]{hofmorr}),
an  element $g\in G$ is either contained in $\comp(G)$
or else the group $\langle g\rangle$ is isomorphic
as a topological group to $\Z$. Subgroups of this kind we shall
call \textit{integral}.
A subgroup $E$ of $G$ is called \textit{real} if it is isomorphic
 to $\R$ as a topological group.

\msk
 We  denote by $\c{G}$  the  space  of closed  subgroups  of  $G$
equipped with  the  \textit{Chabauty  topology};
this  is  a  compact  space. In this space,
each  closed subgroup~$H$ of~$G$ has a neighborhood base consisting of sets
\begin{equation}
\label{eq:Chabauty_base}
\mathcal{U}(H; K, W){\defi}\left\{L{\in}\cg(G) \mid L{\cap}K{\subseteq}WH
       \hbox{ and }H{\cap}K{\subseteq}WL\right\},
\end{equation}
where~$K$ ranges through the set $\mathcal K$ of all compact subsets
of~$G$ and~$W$ through the set $\mathcal U(e)$ of  all neighborhoods
of the identity.
In particular $G\in\cg(G)$ and the singleton subgroup $E=\{1\}$ have  bases for their
respective  neighborhoods of the form
\begin{eqnarray}
\label{eq:G_base}
\mathcal{U}(G; K, W) &=& \left\{L{\in}\cg(G)\mid  K{\subseteq}WL\right\},\label{eq:G_base}\\
\mathcal{U}(E; K, W) &=&\left\{L{\in}\cg(G)\mid L\cap K\subseteq W\right\},\label{eq:E_base}
\end{eqnarray}
$K\in{\mathcal K}$ and $W\in{\mathcal U}(e)$.

\begin{rem} \label{countability} Returning for a moment
to Equation (\ref{eq:Chabauty_base})
we assume  that $G$ is $\sigma$-compact and satisfies the First Axiom
of Countability. Then $G$ contains a sequence $(K_m)_{m\in\N}$
of compact subsets whose interiors for an scending sequence of open sets
covering $G$, and there is a sequence $(W_n)_{n\in\N}$ of open
identity neighborhoods forming a basis of the filter of identity neighborhoods.
Then each element $H\in\cg(G)$ has a countable basis
$\big(\mathcal{U}(H; K_m, W_n)\big)_{m,n\in \N}$ for its neighborhood filter
according to Equation (\ref{eq:Chabauty_base}).
Thus $\cg(G)$ satisfies the First Axiom of Countability.
\end{rem}

\msk

We denote by $\cf(G)$ the subspace of $\cg(G)$ containing all
$\langle g\rangle$ with $g\in G\setminus\comp(G)$, that is,
all subgroups isomorphic to the discrete group $\Z$ of all integers,
the {\it free group of rank} $1$.

\begin{example}[The additive group  $\dsr$] \label{e:R}
The mapping $\phi_{\dsr}\colon [0,\infty]\to\cg(\dsr)$ defined by
$$\phi_\R(r)=\begin{cases}%
\frac 1 r\.\Z  &\mbox{if } 0<r<\infty,\cr
    \{0\}      &\mbox{if } r=0,       \cr
    \R         &\mbox{if } r=\infty
\end{cases}$$
is a homeomorphism (see  Proposition 1.7 of \cite{Hattel1}).
Here $\comp(G)=\{0\}$, and $\cf(\R)=\{\langle r\rangle| 0<r\}$,
and if $(r_n)_{n\in \N}$ is  a  sequence
  of real  numbers converging  to  $0$, then
$(\langle r_n\rangle)_{n\in \N}$  converges  to  $\dsr$.
Therefore, $\R\in\overline{\cf(\R)}$
\end{example}

\begin{defn} \label{integral-subgroups}
A locally compact group $G$ is said to be
\textit{integrally approximable}
if $G\in\overline{\cf(G)}$.
\end{defn}

Here is an equivalent way of expressing that $G$ is
integrally approximable:

{\it There is a net
$(S_j)_{j\in J}$ of  subgroups isomorphic to $\Z$ in $G$
such that $G=\lim_{j\in J} S_j$ in $\cg(G)$.}

\begin{rem}\label{r:immediate}
If $G$ is integrally approximable, $\overline{\cf(G)}\ne\emptyset$,
and so $G\ne\comp(G)$. In particular, $G$ is not singleton.
\end{rem}

\medskip

Our  objective is to describe precisely which locally compact groups
are integrally approximable. The outcome is anticipated in the
abstract. It may be instructive for our intuition
to consider some examples right now.

\bigskip

\subsection{Some examples}\label{ss:examples}
We noticed in Example \ref{e:R} above that $\R$ is integrally approximable.
A bit more generally, we record

\begin{example} \label{p:vector-groups}
For $n\in\N$ and $G=\R^n$, the following statements are equivalent:
\begin{itemize}
\item[(1)] $G$ is integrally approximable.
\item[(2)] $n=1$.
\end{itemize}
\end{example}
\begin{proof} By Example \ref{e:R}, (2) implies (1).
For proving the reverse implication,  we define  the elements
$e_k=(\delta_{km})_{m=1,\dots,n}$, $k=1,\dots, n$ for the Kronecker
deltas
$$\delta_{km}=\begin{cases}%
1  &\mbox{if }\; k=m \\
0  &\mbox{otherwise.}\\
\end{cases}$$
Now we assume (1) and $n\ge2$ and propose to  derive a contradiction.

We let
$W\in{\mathcal U}((0,\dots,0))$ be the open ball of
radius $\frac 1 2$ with respect to the euclidean metric on
$\R^n$,  and let $K$ be the closed ball of radius 2 around $(0,\dots,0)$.
 By (1) there is an
 integral  subgroup $S$ in the neighborhood
$\mathcal U(G;K,W)$ of $G$ in $\cg(G)$.

In view of Equation (\ref{eq:G_base}) this means
$K\subset W+S$. Since $n\ge 2$ we know that
$e_1$ and $e_2$ are contained in $K$ and therefore in $W+S$, that is
there are elements $w_1, w_2\in W$ such that $s_1=e_1-w_1$
and $s_2=e_2-w_2$ are contained $S$. Now the euclidean
distance of $s_m$ from $e_m$ for $m=1,2$ is $<{\frac 1 2}$
in the euclidean plane $E_2\cong\R^2$ spanned by $e_1$ and $e_2$.
Therefore, the two elements $s_1$ and $s_2$ are linearly
independent. On the
other hand, being elements of the subgroup $S\cong \Z$,
they must be linearly dependent.
This contradiction proves that (1) implies (2).
\end{proof}

\begin{example} \label{e:rational} The group $\Q$ (with the discrete topology) is integrally approximable.
\end{example}

\begin{proof} For each natural number $n$ set $H_n=\frac{1}{n!}\Z$.
Since   $(H_n)$  is  an  increasing  sequence  and
\begin{equation*}
\bigcup_{n\in \N} H_n = \Q,
\end{equation*}
we  also have $\lim_{n\in \N} H_n = \Q$ as we may conclude
directly using (\ref{eq:G_base}) or by invoking
Proposition 2.10 of \cite{Hamr-Sad-JOLT}. This proves the claim.
\end{proof}

\begin{example} \label{e:R-times-four} The group $G=\R\times\Z(p)^n$
for any prime $p$  and any natural number $n\ge2$
 is not approximable by  integral subgroups. The smallest example
in this class is $\R\times\Z(2)^2$.
\end{example}

\begin{proof} We note that $\Z(p)^n$ is a vector space $V$ of
dimension $n$ over the field $\F=\mathrm{GF}(p)$.
By way of contradiction suppose that
$G$ is integrally approximable. Let $W=[-1,1]\times\{0\}$
and let $K=\{0\}\times B$ for  a basis $B$ of $V$.
Since $G$ is integrally approximable, there is a $Z\in \cf(G)$
such that $Z\in \mathcal U(G; K, W)$, that is,
$K\subseteq W+Z$. There is an $r\in\R$ and a $v\in V$
such that $Z=\Z\.(r,v)$. Thus
\begin{multline*}
\{0\}\times B\subseteq ([-1,1]\times\{0\})+\Z\.(r,v)\\
=\bigcup_{n\in\Z} \{nr+[-1,1]\}\times \{n\.v\}
\subseteq \R\times \F\.v,
\end{multline*}
 and thus $B\in \F\.v$.
This implies $\dim_\F V\le 1$ in contradiction to
the assumption $n\ge2$.
\end{proof}

For any prime $p$ we recall the basic groups
$\Z(p^n)$, $n=1,\dots,\infty$, and $\Z_p\subset\Q_p$,
where $\Z(p^\infty)$ is the divisible Pr\"ufer group and
$\Z_p$ its character group, the group of $p$-adic integers
(see. e.g.\ \cite{hofmorr}, Example 1.38(i), p.~27) and where
$\Q_p$ denotes the group of $p$-adic rationals
(see loc.~cit. Exercise E1.16, p.~27). Then Example \ref{e:R-times-four}
raises at once the following question:

\begin{ques} \label{q:question} Are the  locally compact
groups $G=\R\times C$ for $C=\Z(p^n)$, $C=\Z(p^\infty)$, $C=\Z_p$, or
$C=\Q_p$ integrally approximable?
\end{ques}

In the light of the negative Example \ref{e:R-times-four}, this
may not appear so simple a matter to answer. Our results will show that
they all are integrally approximable.

\medskip

While the group $\Z$ is integrally approximable trivially from
the definition, we have, in the context of the group $\Z$,
the following lemma, whose proof we
can handle as an exercise directly from the definitions and which serves
as a further example of the particular role played by $\Z$ in
the context of integrally approximable groups.

\begin{lem}  \label{l:G=Z} Let $A$ be a locally compact
abelian group and assume that $A\times \Z$
is integrably approximable. Then $|A|=1$.
\end{lem}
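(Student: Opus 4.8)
The plan is to argue by contradiction, assuming $A\times\Z$ is integrally approximable while $|A|\ge 2$, and to extract a contradiction from a single well-chosen Chabauty neighborhood of $G=A\times\Z$. The guiding idea is that any integral subgroup $S=\gen{(a,k)}\cong\Z$ sitting inside $A\times\Z$ projects onto a subgroup of the second factor $\Z$; if that projection is nontrivial (say equal to $m\Z$ with $m\ne 0$), then $S$ is extremely ``thin'' in the first-factor direction — between consecutive elements of $S$ whose $\Z$-coordinates differ by $m$, nothing happens in $A$ — whereas if the projection is trivial then $S\subseteq A\times\{0\}$ and cannot approach the element $(a_0,1)$ for any $a_0$. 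Either way $S$ cannot lie in a suitable neighborhood $\mathcal U(G;K,W)$.

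Concretely, first I would fix an identity neighborhood of the form $W=U\times\{0\}$ where $U$ is a compact symmetric identity neighborhood in $A$ small enough that $U$ is a proper subset of $A$ (possible since $|A|\ge 2$, so $A\ne\{e\}$); shrinking further, since $A$ is locally compact we may take $U$ compact. Next pick a nonidentity element $a_0\in A$, and set the compact test set $K=(\{e,a_0\}\times\{0,1\})\subseteq A\times\Z$. By integral approximability there is $S=\gen{(a,k)}\in\cf(G)$ with $S\in\mathcal U(G;K,W)$, i.e. $K\subseteq W\cdot S=(U\times\{0\})\cdot S$. From $(e,1)\in WS$ and $(a_0,1)\in WS$ we get integers $n_1,n_2$ with $n_1 k=1=n_2 k$, forcing $k=\pm 1$, hence $n_1=n_2=k^{-1}=\pm1$; but then the first coordinates give $a_0\in U\cdot\{a^{n_1}\}$ and $e\in U\cdot\{a^{n_1}\}$, so $a_0\in U\cdot U^{-1}\cdot U=U^3$. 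The resolution is now to choose $U$ at the very start so small that $a_0\notin U^3$ (again using local compactness and the fact that $\{a_0\}$ is closed and does not contain $e$, so some identity neighborhood $U$ has $U^3\not\ni a_0$). This contradiction shows $|A|=1$.

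I would present the argument in that order: (i) reduce to the contradiction hypothesis $|A|\ge2$; (ii) fix $a_0\ne e$ and choose $U$ with $a_0\notin U^3$; (iii) form $K$ and $W$ and invoke the definition of $\cf(G)$ and of the basic neighborhood $\mathcal U(G;K,W)$ from Equation~(\ref{eq:G_base}); (iv) use the $\Z$-coordinate to pin down $k=\pm1$ and the exponents; (v) derive $a_0\in U^3$ and conclude. The main obstacle — really the only subtle point — is the correct choice of the test data $K$ and $W$ so that membership of $S$ in $\mathcal U(G;K,W)$ simultaneously controls both coordinates; once one realizes that putting $a_0$ at $\Z$-height $1$ (rather than height $0$) is what forces the generator's $\Z$-component to be a unit, the rest is bookkeeping. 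A minor point to be careful about is that $U$ must be chosen compact (so that $W\subseteq\mathcal U(e)$ genuinely gives a Chabauty-neighborhood as in~(\ref{eq:Chabauty_base})) and symmetric, which costs nothing in a locally compact group.
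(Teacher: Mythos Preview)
Your proof is correct and follows essentially the same approach as the paper: argue by contradiction, fix a nonidentity $a_0\in A$ and a small symmetric identity neighborhood $U$ with $a_0$ outside a suitable power of $U$ (the paper uses $W+W$, you use $U^3$, which is harmless overkill), test with the two points $(e,1)$ and $(a_0,1)$ to force the generator's $\Z$-coordinate to be a unit, and read off the contradiction in the $A$-coordinate. Two cosmetic remarks: the height-$0$ points you included in $K$ are never used, and $U$ need not be compact for $W=U\times\{0\}$ to serve in a basic Chabauty neighborhood---any identity neighborhood works by~(\ref{eq:Chabauty_base}).
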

\begin{proof} By way of contradiction assume that there is an
$a\ne 0$ in $A$. Then there is a zero-neighborhood $W=-W$ in $A$
such that $a\notin W+W$. Set $G=A\times\Z$ and
 $K=\{(a,1),(0,1)\}\subseteq G$.
Since $G$ is integrally approximable there is
a $Z\cong\Z$ in $\cf(G)$ such that
$$Z\in{\mathcal U}(G; K,W\times\{0\}).$$
Since $Z$ is an integral subgroup, there are
elements $b\in A$ and $0<n\in\Z$ such that
$Z=\Z\.(b,n)$. So $(a,1)$ and $(0,1)$ are contained in
$(W\times\{0\})+\Z\.(b,n)$. Therefore
there are elements
 $w_a, w_0\in W$
and integers $m_a, m_0\in\Z$
such that
\begin{enumerate}
\item[$(\mathrm{i})$] $w_0+m_0\.b=0$,
\item[$(\mathrm{ii})$] $w_a+m_a\.b=a$, and
\item[$(\mathrm{iii})$] $m_0n=1$.
\item[$(\mathrm{iv})$]  $m_an=1$.
\end{enumerate}

Equations (iii) and (iv), holding in $\Z$, imply
$m_0=m_a=n=1$.
Thus equation (i) implies $b\in -W=W$. Then from (ii) it follows that
$a\in W+W$. This is a contradiction, which proves our claim for the
example.
\end{proof}

\bigskip

\section{The class of approximable groups}

\label{s:class}

As a first step towards the main results it will be helpful
to observe the available closure properties of the class of
integrally approximable groups.

\subsection{Preservation properties} \label{ss:preserv}

\begin{prop}\label{p:class(appro-integral)}
The class of integrally approximable locally compact groups
is closed under the following operations:
\begin{enumerate}
\item[(OS)] Passing to open nonsingleton subgroups,
\item[(QG)] passing to quotients modulo compact subgroups,
\item[(QO)] passing to torsion-free quotients modulo open subgroups,
\item[(DU)] forming directed unions of closed subgroups,
\item[(PL)] forming strict projective limits of quotients
            $G/N$ modulo compact subgroup $N$.
\end{enumerate}
\end{prop}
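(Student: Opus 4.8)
The plan is to verify the five closure properties one at a time, in each case unwinding the meaning of integral approximability through the description \eqref{eq:G_base} of the basic Chabauty neighborhoods: a locally compact group $A$ is integrally approximable exactly when, for every compact $K\subseteq A$ and every identity neighborhood $W$, there is a subgroup $S\cong\Z$ of $A$ with $K\subseteq WS$. For \textbf{(OS)}, let $H$ be an open (hence closed) nonsingleton subgroup of an integrally approximable $G$, and fix $K\subseteq H$ compact and $W\subseteq H$ an identity neighborhood (we may take $W$ inside $H$ since $H$ is open). If $S\cong\Z$ in $G$ satisfies $K\subseteq WS$, then writing $k=ws$ with $w\in W$, $s\in S$ we get $s=w^{-1}k\in H$, so in fact $K\subseteq W(S\cap H)$, and $S\cap H$ is a subgroup of $S\cong\Z$, hence $\{e\}$ or a copy of $\Z$. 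Applying this with $K=\{e,h\}$ for a point $h\in H\setminus W$ forces $S\cap H\ne\{e\}$ for an appropriate $S$, so $H$ does contain a copy of $\Z$; feeding that copy back in settles the remaining pairs $(K,W)$ (those with $K\subseteq W$ being trivial), and $H$ is integrally approximable.

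\textbf{(QG)}: with $q\colon G\to G/N$, $N$ compact normal, I would lift a test pair $(\bar K,\bar W)$ for $G/N$ to $K=q^{-1}(\bar K)$ (compact, as $q$ is proper) and $W=q^{-1}(\bar W)$; a subgroup $S=\langle s\rangle\cong\Z$ of $G$ with $K\subseteq WS$ maps onto $\bar K\subseteq\bar W\,q(S)$, where $q(S)=\langle q(s)\rangle$ is a closed subgroup of $G/N$ (image of a closed set under the proper map $q$), algebraically isomorphic to $\Z$ since $S\cap N$ is discrete and compact, hence finite, hence trivial; a locally compact group algebraically isomorphic to $\Z$ is discrete, so $q(S)\in\cf(G/N)$. \textbf{(QO)}: for $H$ open normal with $G/H$ discrete and torsion free, apply the criterion to $K=\{e,s_1,s_2\}$, with $q(s_1),q(s_2)$ any two elements of $G/H$, and to $W=H$: a subgroup $\langle s\rangle\cong\Z$ of $G$ with $K\subseteq H\langle s\rangle$ forces $q(s_i)\in\langle q(s)\rangle$, so any two elements of $G/H$ lie in a common cyclic subgroup; torsion-freeness then exhibits $G/H$ as a directed union of copies of $\Z$, and \textbf{(DU)} finishes the argument. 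And \textbf{(DU)} itself is essentially formal: for a directed family of closed subgroups $H_i$ with $G=\overline{\bigcup_i H_i}$ one has $\cf(H_i)\subseteq\cf(G)$, hence $H_i\in\overline{\cf(H_i)}\subseteq\overline{\cf(G)}$, while $G=\lim_i H_i$ in $\cg(G)$ (an increasing net of closed subgroups converges to the closure of its union, cf.\ Proposition~2.10 of \cite{Hamr-Sad-JOLT}), so that $G\in\overline{\{H_i\}}\subseteq\overline{\cf(G)}$.

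The real work is \textbf{(PL)}. Write $G=\lim_N G/N$ over a filter basis $\mathcal N$ of compact normal subgroups with $\bigcap\mathcal N=\{e\}$, and let $q_N\colon G\to G/N$ and $\pi_N\colon\cg(G)\to\cg(G/N)$, $L\mapsto q_N(L)$, be the induced maps. The key point — valid because the members of $\mathcal N$ are compact, normal, and downward directed — is that the preimages $q_N^{-1}(\bar W)$ (with $N\in\mathcal N$ and $\bar W$ an identity neighborhood in $G/N$) form a neighborhood basis of the identity of $G$, and every compact $K\subseteq G$ lies in some $q_{N_0}^{-1}(\bar K_0)$ with $\bar K_0=q_{N_0}(K)$. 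Consequently a given basic neighborhood $\mathcal U(G;K,W)$ of $G$ can be shrunk to one with $W\supseteq N_0$ and $K=q_{N_0}^{-1}(\bar K_0)$, whereupon the elementary identities $q_{N_0}^{-1}(\bar A\bar B)=q_{N_0}^{-1}(\bar A)\,q_{N_0}^{-1}(\bar B)$ and $WN_0=W$ identify it with $\pi_{N_0}^{-1}\bigl(\mathcal U(G/N_0;\bar K_0,\bar W_0)\bigr)$. Since $G/N_0$ is integrally approximable, $\mathcal U(G/N_0;\bar K_0,\bar W_0)$ contains some $T=\langle\bar t\rangle\in\cf(G/N_0)$; choosing $t\in G$ with $q_{N_0}(t)=\bar t$ (possible because $\Z$ is free, so the extension $q_{N_0}^{-1}(T)\to T$ splits) and putting $S=\langle t\rangle$, we get $\pi_{N_0}(S)=\langle\bar t\rangle=T$, so $S\in\mathcal U(G;K,W)$, and $S\cong\Z$ because $t\notin\comp(G)$ — otherwise $q_{N_0}\bigl(\overline{\langle t\rangle}\bigr)$ would be a compact subgroup of $G/N_0$ containing the closed non-compact group $T$. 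Thus every basic neighborhood of $G$ meets $\cf(G)$, i.e.\ $G\in\overline{\cf(G)}$.

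The main obstacle I anticipate is, therefore, inside (PL): proving that the preimages $q_N^{-1}(\bar W)$ genuinely form a neighborhood basis at the identity of $G$ — equivalently, that the given strict projective limit reproduces the topology of $G$ — together with the bookkeeping that collapses the Chabauty neighborhood $\mathcal U(G;K,W)$ down to the single finite level $N_0$. The lifting of a $\Z$-quotient to a $\Z$-subgroup and the verification $S\cong\Z$ are, by contrast, soft, and the four remaining parts should be routine once the criterion \eqref{eq:G_base} is available.
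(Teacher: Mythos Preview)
Your proof is correct and follows essentially the same route as the paper's: each of the five items is handled by unwinding the basic Chabauty neighborhood criterion $K\subseteq WS$ and lifting or pushing down a generator of a copy of $\Z$. The only tactical differences are that for (QG) you argue directly where the paper simply cites the continuity of $A\mapsto\pi(A)\colon\cg(G)\to\cg(G/N)$, and for (QO) you pass through (DU) (showing $G/H$ is a directed union of infinite cyclic groups) whereas the paper observes directly that for each finite $\widetilde K\subseteq G/U$ one already has $\widetilde K\subseteq\pi(Z)\cong\Z$, which is the discrete form of integral approximability without further reduction.
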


\begin{proof} (OS) Let $G$ be a locally compact group
$G$ that is approximable by integral subgroups and  let $U$ be
an open nonsingleton subgroup. Let $K$ be a compact subspace of
$U$  and $W$ an identity neighborhood contained in $U$; we
may take $K\ne\{1\}$ and $W$ small enough so that $K\not\subseteq W$.
 We must
find a subgroup $Z\cong\Z$ inside $U$ such that
$Z\in \mathcal U(U;K,W)$, that is, $K\subseteq WZ$.

However, $G$ is integrally approximable, and so there is a
subgroup $E\cong\Z$ of $G$ such that $E\in \mathcal U(G;K,W)$,
that is, $K\subseteq WE$. Then $K=K\cap U\subseteq WE\cap U$.
By the modular law, $W(E\cap U)=WE\cap U$ and  so
$K\subseteq W(E\cap U)$. The condition $K\not\subseteq W$
rules out the possibility that $E\cap U=\{1\}$. Since
$E\cong\Z$ we know that $E\cap U\cong\Z$ and so we may take
$Z=E\cap U$ and thus obtain $K\subseteq WZ$ which is what
we have to prove.
\msk

(QG)  Let $N$ be a compact  subgroup of  a  locally  compact
group  $G$  approximable by integral subgroups  and
let $\pi\colon G\to H$, $H=G/N$, be the quotient morphism. Then
the  continuity of  the
map $A\mapsto \pi(A):\c{G}\to \c{H}$ (see Corollary 2.4  of \cite{Hamr-Kad-JOLT}) implies that $H$
is approximable by integral subgroups.

\msk

(QO) Let $U$ be an open  subgroup of  a  locally  compact
group  $G$  approximable by integral subgroups
so that $H\defi G/U$
is torsion-free,  and
let $\pi\colon G\to H$  be the quotient morphism.
Since $U$ is open, $H$ is discrete, and the singleton
set containing the identity $\til e =U$ in $H=G/U$ is an
identity neighborhood. So for a given compact, hence finite,
subset $\til K$ of  $H$ we have to find a $\til Z\in \cf(H)$
with $\til Z \subseteq \mathcal U(H;\til K,\{\til e\})$,
that is $\til K\subseteq \til Z$ according to
Equation (\ref{eq:G_base}). We may and will assume that there
is at least one $\til k\in \til K$ such that $\til k\ne\til e$.

Now by the local compactness of $G$ we find  a compact
set  $K$  of  $G$  such  that $\pi(K)= \til K$.

Since $U$ is an identity neighborhood  in $G$
and since $G$ is integrally approximable, there is a
subgroup $Z\in\cf(H)$ contained in
$\mathcal{U}(G; K, U)$, that is $K\subseteq UZ$. Applying $\pi$,
we get $\til K\subseteq \pi(Z)$. In particular,
$\til e\ne\til k\in \pi(Z)$. Since $H$ is torsion-free
and $Z\cong\Z$ we conclude that $\pi(Z)\cong\Z$, and so
we can set $\til Z=\pi(Z)$, getting $\til K\subseteq \til Z$,
which we had to show.

\msk

(DU)  Let  $(G_i)_{i\in I}$  be  a  directed family of  closed
subgroups of  a locally compact group $G$ such that
$G=\overline{\bigcup_{i\in I} G_i}$.
Then from Proposition 2.10 of \cite{Hamr-Sad-JOLT} we know
\begin{equation} \label{eq:no-one}
G=\lim_i G_i\mbox{ in $\c{G}$}.
\end{equation}
Now assume
\begin{equation} \label{eq:no-two}
(\forall i\in I)\ ~G_i \mbox{ is approximable  by integral
subgroups.}
\end{equation}
 Let $\mathcal U$ be an open
neighborhood of $G$ in $\c{G}$. Then by (\ref{eq:no-one}) there is some $j\in I$ with
$G_j\in {\mathcal U}$, and so ${\mathcal U}$ is also an open neighborhood
of $G_j$ in $\c{G}$. Then by (\ref{eq:no-two}) there is a closed subgroup $Z\cong\Z$
in $G$ with $Z\in{\mathcal U}$. This proves that $G$ is approximable
by integral subgroups.

\msk

(PL) Let the locally compact group $G$ be a strict projective
limit $G=\lim_{N\in{\mathcal N}}G/N$ of integrally approximable
quotient groups modulo compact
normal subgroups $N$. Then $G$ has arbitrarily small open identity
neighborhoods $W$ for which there is an $N\in{\mathcal N}$ such that
$W=NW$. Let $K\in\mathcal K$ be a compact subspace of $G$. If $W$ is
given, we note that $NK$ is still compact, and so we assume that
$NK=K$ as well. We aim to show that there is a subgroup $Z\cong\Z$
of $G$ such that $K\subseteq WZ$ which will show that
$Z\in{\mathcal U}(G;K,W)$ as in Equation (\ref{eq:G_base}), and  this will
complete the proof.

Now we assume that for all $M\in\mathcal N$ the group $G/M$
is approximable by integral subgroups. Then, in particular,
$G/N$ is approximable by integral subgroups. Therefore
we find a subgroup $Z_N\subseteq G/N$ such that $Z_N\cong\Z$ and
$Z_N\in {\mathcal U}(G/N; K/N, W/N)$ according to
(\ref{eq:G_base}). This means that
\begin{equation} \label{eq:in N}
K/N\subseteq (W/N)\.Z_N.
\end{equation}
Let $z\in G$ be such that $Z_N=\gen{zN}$. Set $Z=\gen z$, then
$Z\cong\Z$ and $Z_N=ZN/N$. Then (\ref{eq:in N}) is equivalent
to $K/N\subseteq (W/N)(ZN/N)=WZ/N$ which in turn is equivalent
to $K\subseteq WZ$ and this is what we had to show.
\end{proof}

\begin{rem}
In the proof of (QO) it is noteworthy that we did not invoke
an argument claiming the continuity of the function
$A\mapsto AU/U:\cg(G)\to \cg(H)$. Indeed
 let $\pi\colon\R\times \Z\to \Z$ be the projection. The  sequence
$(\langle(n, 1)\rangle)_{n\in\N}$
converges to  the  trivial  subgroup $\{(0,0)\}$ in $\cg(G)$, but
its image is the constant sequence with value $\langle 1\rangle=\Z$
and therefore   converges to  $\Z$. This shows that the induced
map $\cg(\pi)\colon\cg(G)\to\cg(H)$ need not be continuous in general.
\end{rem}

\section{The case of discrete groups}
\label{s:discr}
In order to demonstrate the workings of some of the operations
discussed in Proposition \ref{p:class(appro-integral)} we show

\begin{lem} \label{l:discreteness}
Assume that an integrally approximable locally compact
group $G$ has a compact identity component $G_0$. Then $G$ is discrete.
\end{lem}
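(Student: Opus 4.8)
The plan is to use the closure property (QG) to reduce to the totally disconnected case, and then to use (QO) to reach a contradiction unless $G$ is already discrete. Concretely, since $G_0$ is a compact subgroup of $G$ and $G$ is integrally approximable, Proposition~\ref{p:class(appro-integral)}(QG) tells us that $H\defi G/G_0$ is integrally approximable as well. Now $H$ is a totally disconnected locally compact group, so by van Dantzig's theorem it has a compact open subgroup $U$. I would like to argue that passing modulo $U$ forces a finite discrete quotient to be integrally approximable, which by Remark~\ref{r:immediate} is impossible unless that quotient is trivial; this would say $U=H$, i.e.\ $H$ is compact, whence $G$ is an extension of the compact group $G_0$ by the compact group $H$ and hence compact --- but a compact group equals $\comp(G)$, contradicting Remark~\ref{r:immediate} again and forcing the contradiction to have been a degenerate one, which can only mean $G$ was discrete to begin with.

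The obstacle is that (QO) as stated requires the quotient $H/U$ to be \emph{torsion-free}, and $H/U$ is a discrete group that is typically all torsion (it need not even be a group in general, but if $U$ is chosen normal it is). So the clean approach is: first run (QG) to get $H=G/G_0$ integrally approximable and totally disconnected; then show directly that a totally disconnected integrally approximable locally compact group must be discrete. For the latter, suppose $U$ is a compact open subgroup of $H$ with $U\ne\{e\}$. Pick a nontrivial element and build a compact set $K$ witnessing, via Equation~(\ref{eq:G_base}), that any integral subgroup $Z$ approximating $H$ must meet $U$ in a nontrivial way. But $Z\cong\Z$ is discrete and closed, so $Z\cap U$ is a discrete subgroup of the \emph{compact} group $U$, hence finite; being a subgroup of $\Z$ it is then trivial. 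Choosing $K=\{x\}\cup\{e\}$ for a suitable $x\in U\setminus W$ (with $W\subseteq U$ a small identity neighborhood and $W=WU$-adapted, using that $U$ is open) forces $x\in WZ$, and intersecting with $U$ and using the modular law $WZ\cap U=W(Z\cap U)=W$ gives $x\in W$, a contradiction. Hence $U=\{e\}$ and $H$ is discrete.

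Finally I would assemble the pieces: $H=G/G_0$ is discrete, so $G_0$ is open in $G$; being also compact, $G_0$ is a compact open subgroup of $G$. If $G_0\ne\{e\}$, the same argument as in the previous paragraph --- run now inside $G$ itself with $U=G_0$ --- produces a contradiction, since an integral subgroup $Z\cong\Z$ of $G$ meets the compact open subgroup $G_0$ only trivially, while a well-chosen compact witness $K$ inside $G_0$ forces $K\subseteq WZ$ with $W\subseteq G_0$, whence $K\subseteq W(Z\cap G_0)=W$, contradicting $K\not\subseteq W$. Therefore $G_0=\{e\}$, which together with $G_0$ being open means $G$ is discrete. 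The one point to watch is the adaptation of identity neighborhoods: since $G_0$ (resp.\ $U$) is open, every identity neighborhood $W$ contained in it satisfies $W\subseteq G_0$ automatically, and one may shrink $W$ so that $K\not\subseteq W$; the modular-law step $WZ\cap G_0=W(Z\cap G_0)$ is exactly the computation already used in the proof of (OS), so no new machinery is needed. I expect the main subtlety to be purely bookkeeping --- making sure the compact witness set $K$ is chosen so that the containment $K\subseteq WZ$ genuinely forces a nontrivial intersection $Z\cap G_0$, which is then killed by compactness of $G_0$.
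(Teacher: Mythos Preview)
Your argument is correct, but it is considerably more roundabout than the paper's proof, and the detour through $G/G_0$ together with the hands-on Chabauty computation can be collapsed entirely.

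The paper argues in three lines: since $G_0$ is compact, $G$ itself contains a compact open subgroup $U$ (Montgomery--Zippin; equivalently, pull back a van Dantzig subgroup from $G/G_0$ and note its preimage is compact because $G_0$ is). If $G$ is not discrete then $U\ne\{1\}$, so by (OS) the subgroup $U$ is integrally approximable, whence $U\ne\comp(U)$ by Remark~\ref{r:immediate}; but $U$ is compact, so $U=\comp(U)$, a contradiction.

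Your second and third paragraphs are essentially an inline re-derivation of this: the modular-law step $WZ\cap U=W(Z\cap U)$ together with the observation that $Z\cap U$ is a finite (hence trivial) subgroup of $\Z$ is exactly what (OS) plus Remark~\ref{r:immediate} already package for you. The initial pass to $H=G/G_0$ via (QG) is unnecessary, since the compact open subgroup exists in $G$ directly once $G_0$ is compact; and having established that $H$ is discrete you then repeat the same computation inside $G$ with $U=G_0$, doubling the work. Nothing is wrong, but you should recognize that your core computation \emph{is} the proof of (OS), and that citing (OS) and Remark~\ref{r:immediate} finishes the lemma immediately.
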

\begin{proof} By way of contradiction suppose that $G$ is not
discrete.
Since $G_0$ is compact, there is a compact open subgroup $U$
(see \cite{Montgomery--Zippin}, Lemma 2.3.1 on p.~54). Since $G$ is not
discrete, $U\ne\{1\}$. Since
$G$ is integrally approximable, so is $U$ by
Proposition \ref{p:class(appro-integral)} (OS).
Then $U\ne\comp(U)$ by Remark \ref{r:immediate}, but $U$ being
compact we have $U=\comp(U)$ and this is a contradiction which
proves the lemma.
\end{proof}

\subsection{Monothetic and inductively monothetic groups}
\label{ss:mon}

Before we proceed we need to recall some facts around
monothetic groups. A topological group $G$ is {\it monothetic}
if there is an element $g\in G$ such that $G=\gen g$.

\begin{defn}[Inductively monothetic group]\label{d:ind-mon} A topological group  $G$  is called
{\em inductively monothetic} if (and only if)  every finite
subset $F\subseteq G$ there is an element $g\in G$
such that $\gen F =\gen g$.
\end{defn}

The circle group $\T=\R/\Z$ contains a unique element
$t=2^{-1}+\Z\in\T$ such that $2\.t=0$. The group $\T^2$
 is monothetic but not inductively monothetic,
since the subgroup $\langle\{(t,0), (0,t)\}\rangle$ is finitely
generated but not monothetic. The discrete additive  group $\Q$
is inductively monothetic but is not monothetic.

In \cite{HHR}, Theorem 4.12 characterizes inductively monothetic
locally compact groups. Before we cite this result we recall
that Braconnier (see \cite{Braconnier}) called a locally
compact group $G$ a {\it local product}
$\prod_{j\in J}^{\mathrm{loc}}(G_j,C_j)$ for a family $(G_j)_{j\in J}$
of locally compact groups $G_j$ if each of them has a compact open
subgroup $C_j$ such that an element $g=(g_j)_{j\in J}\in\prod_{j\in J}G_j$
is in $G$ iff there is a finite subset $F_g\subseteq J$ such that
$g_j\in C_j$ whenever $j\notin F_g$.   

\begin{prop}[Classification of inductively monothetic groups]\label{p:ind-mon} A locally compact group $G$
is  inductively monothetic if one of the following conditions is
satisfied:
\begin{itemize}
\item[(1)] $G$ is  a one-dimensional compact connected group,
\item[(2)] $G$ is discrete and is isomorphic to
            a subgroup of $\Q$.
\item[(3)] $G$ is isomorphic to a local product
           $\prod_{p\ \mathrm{prime}}^{\mathrm{loc}}(G_p,C_p)$
           where each of its characteristic $p$-primary components $G_p$
           is either $\cong \Z(p^n)$, $n=0,1,\dots,\infty$, or $\Z_p$,
           or $\Q_p$.
\end{itemize}
\end{prop}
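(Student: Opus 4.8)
The plan is to verify the three sufficient conditions separately, in each case exploiting the reformulation that $G$ is inductively monothetic (Definition \ref{d:ind-mon}) precisely when every finitely generated closed subgroup $H=\gen F$, with $F\subseteq G$ finite, is monothetic. For the compact abelian groups $H$ that will arise I shall use the classical duality criterion: a compact abelian group $H$ is monothetic if and only if its Pontryagin dual $\widehat H$ is isomorphic, as an abstract group, to a subgroup of the circle group $\R/\Z$ (see \cite{hofmorr}). In particular it suffices to show that $\widehat H$ is \emph{locally cyclic}, since a torsion-free locally cyclic group has rank at most one and hence embeds into $\Q\hookrightarrow\R/\Z$, while a torsion locally cyclic group has cocyclic primary components and hence embeds into $\Q/\Z\subseteq\R/\Z$. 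Condition (2) needs none of this: if $G$ is discrete and $G\le\Q$, then any finite $F\subseteq G$ generates a finitely generated subgroup of $\Q$, which is torsion-free of rank one and therefore infinite cyclic (or trivial); thus $\gen F=\langle F\rangle$ is monothetic and $G$ is inductively monothetic.

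For condition (1), a one-dimensional compact connected group $G$ is necessarily abelian, and its dual $\widehat G$ is a torsion-free discrete abelian group of rank one, i.e.\ (isomorphic to) a subgroup of $\Q$. Any closed subgroup $H\le G$ is again compact abelian, and by duality $\widehat H\cong\widehat G/H^{\perp}$ is a quotient of a subgroup of $\Q$. Since both subgroups and quotients of a locally cyclic group are locally cyclic and $\Q$ is locally cyclic, $\widehat H$ is locally cyclic, whence $H$ is monothetic by the criterion above. As this holds for every closed subgroup, a fortiori for every finitely generated one, $G$ is inductively monothetic.

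Condition (3) is the substantial case. Fix a finite $F=\{g_1,\dots,g_n\}$ in the local product $G=\prod_{p}^{\mathrm{loc}}(G_p,C_p)$, let $H=\gen F$, write $\pi_p\colon G\to G_p$ for the coordinate projection, and set $H_p=\gen{\pi_p(g_1),\dots,\pi_p(g_n)}\le G_p$, so that $\pi_p(H)\subseteq H_p$ and hence $H\subseteq\prod_p H_p$. The first key observation is that each $H_p$ is a compact procyclic pro-$p$ group: neither $\Q_p$ nor $\Z(p^\infty)$ is finitely generated as a topological group (finitely many $p$-adic numbers all lie in a single $p^{m}\Z_p$, and finitely many elements of $\Z(p^\infty)$ generate a finite subgroup), so in each of the four admissible cases for $G_p$ the group $H_p$ is either a finite cyclic $p$-group or a group $p^{m}\Z_p\cong\Z_p$. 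Because $g_{i,p}\in C_p$ for all but finitely many $p$, we have $H_p\subseteq C_p$ off a finite set of primes; thus $\prod_p H_p$ is compact, and $H$, being closed in it, is itself compact and profinite.

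It remains to combine the per-prime generators into one. Once $H$ is compact, $\pi_p(H)$ is a closed subgroup of $G_p$ containing every $\pi_p(g_i)$, so $\pi_p(H)=H_p$. Using that any continuous homomorphism from a pro-$r$ group to a pro-$q$ group with $r\ne q$ is trivial, the primary (Sylow) decomposition $H=\prod_q H^{(q)}$ of the profinite abelian group $H$ satisfies $H^{(q)}=\pi_q(H)=H_q$, so $H=\prod_p H_p$ with each factor compact procyclic. Consequently $\widehat H=\bigoplus_p\widehat{H_p}$ with each primary component $\widehat{H_p}\in\{0,\Z(p^k),\Z(p^\infty)\}$ cocyclic; hence $\widehat H$ is locally cyclic and $H$ is monothetic. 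The main obstacle, and the step where the distinctness of the primes is essential, is exactly this last decomposition: it is what upgrades ``monothetic in each coordinate'' to ``monothetic'', and it would fail outright if two factors shared a prime. With $\gen F$ shown monothetic for every finite $F$, $G$ is inductively monothetic, completing condition (3).
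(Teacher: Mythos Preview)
The paper does not prove Proposition~\ref{p:ind-mon}; it merely quotes the result from \cite{HHR}, Theorem~4.12, and uses it as a black box. Your argument therefore supplies what the paper omits, and it is correct.

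A few remarks on the individual cases. For (2) there is nothing to add. For (1), your claim that a one-dimensional compact connected group is abelian is justified by the Levi--Mal'cev type decomposition of compact connected groups: any nonabelian such group contains a simple compact Lie factor, hence has dimension $\ge 3$. With abelianness in hand, your duality argument (quotients of rank-one torsion-free groups are locally cyclic, and locally cyclic groups embed into $\R/\Z$) is clean and standard. For (3), the crucial step---identifying the $q$-Sylow of the compact subgroup $H$ with the coordinate group $H_q$---is exactly right: once you know $H$ is profinite and $\pi_q(H)=H_q$, the triviality of continuous maps between pro-$r$ and pro-$q$ groups for $r\ne q$ forces $H=\prod_p H_p$, and the monotheticity of $H$ then follows from the embeddability of $\bigoplus_p\widehat{H_p}$ into $\Q/\Z$. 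Your observation that finitely many elements of $\Q_p$ lie in a single $p^{-N}\Z_p$, and finitely many elements of $\Z(p^\infty)$ in a single $\Z(p^k)$, is precisely what makes each $H_p$ compact procyclic.

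In short, your proof is sound and is in fact more than the paper offers for this statement.
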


It follows, in particular, that a \emph{totally disconnected compact
monothetic group is inductively monothetic}  so that the concept
of an inductively monothetic locally compact group is more general
than that of a locally compact monothetic group \emph{in the
totally disconnected domain}.
\ssk We remark that a locally compact group is called {\it periodic} if
it is totally disconnected and has no subgroups isomorphic to $\Z$.
Thus the class (3) of Proposition \ref{p:ind-mon} covers precisely the
periodic inductively monothetic groups.

Our present stage of information  allows us to clarify on an elementary level
the discrete side of our project:

\begin{thm} \label{th:discrete-case}  Let  $G$  be  a locally compact group
such that $G_0$ is a compact  group.
 Then the  following assertions  are  equivalent:
\begin{enumerate}
  \item[$(1)$] $G$ is integrally approximable.
  \item[$(2)$] $G$ is discrete and isomorphic to a nonsingleton
               subgroup of $\Q$.
\end{enumerate}
\end{thm}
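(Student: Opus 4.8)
The plan is to prove the equivalence of $(1)$ and $(2)$ by treating the two implications separately, with the bulk of the work in showing $(1)\Rightarrow(2)$.

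\medskip

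\noindent\textbf{The direction $(2)\Rightarrow(1)$.} This should be short. A nonsingleton subgroup $A$ of $\Q$ is a directed union of its finitely generated subgroups, each of which is infinite cyclic, hence isomorphic to $\Z$ and therefore trivially integrally approximable. By closure property (DU) of Proposition \ref{p:class(appro-integral)}, $A$ is integrally approximable. (Alternatively one can run the argument of Example \ref{e:rational} directly, writing $A$ as an increasing union of cyclic groups and invoking Equation (\ref{eq:G_base}).)

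\medskip

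\noindent\textbf{The direction $(1)\Rightarrow(2)$.} First I would use Lemma \ref{l:discreteness}: since $G_0$ is compact and $G$ is integrally approximable, $G$ is discrete. So it remains to show that a discrete integrally approximable group $G$ is isomorphic to a nonsingleton subgroup of $\Q$. By Remark \ref{r:immediate}, $G$ is nonsingleton, so the content is that $G$ embeds in $\Q$. Equivalently (for a discrete abelian group), $G$ is torsion-free of rank $1$, i.e.\ any two elements of $G$ are $\Z$-linearly dependent and $G$ has no nontrivial torsion; actually $G$ must first be shown to be abelian. The key tool is the defining approximation property together with the concrete description (\ref{eq:G_base}) of neighborhoods of $G$: since $G$ is discrete, $\{e\}$ is an identity neighborhood, so for every finite $K\subseteq G$ there is a cyclic subgroup $Z=\langle z\rangle\cong\Z$ with $K\subseteq Z$. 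In other words, \emph{every finite subset of $G$ is contained in a cyclic subgroup}. From this one gets immediately that $G$ is abelian (any two elements lie in a common cyclic, hence abelian, subgroup), that $G$ is torsion-free (any torsion element lies in some $\langle z\rangle\cong\Z$, forcing it to be trivial), and that $G$ has torsion-free rank $\le 1$ (any two elements lie in a common $\langle z\rangle$ and are therefore $\Z$-dependent). A torsion-free abelian group of rank $1$ is isomorphic to a subgroup of $\Q$ (a standard fact), and it is nonsingleton, completing $(1)\Rightarrow(2)$.

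\medskip

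\noindent\textbf{Where the difficulty lies.} The only subtle point is the passage from ``$G$ is integrally approximable'' to ``every finite subset of $G$ lies in a cyclic subgroup,'' and this is precisely where discreteness (already secured by Lemma \ref{l:discreteness}) is essential: it makes $\{e\}\in\mathcal U(e)$ available, so that $\mathcal U(G;K,\{e\})=\{L\in\cg(G)\mid K\subseteq L\}$, and a subgroup $Z\cong\Z$ in this neighborhood is exactly a cyclic overgroup of $K$. Once this reformulation is in hand, the algebraic steps (abelian, torsion-free, rank $\le 1$, embed in $\Q$) are routine. I would present the argument in the order: (a) $(2)\Rightarrow(1)$ via (DU); (b) reduce $(1)\Rightarrow(2)$ to the discrete case via Lemma \ref{l:discreteness}; (c) extract the ``finite subsets lie in cyclic subgroups'' property from (\ref{eq:G_base}); (d) deduce abelian, torsion-free, rank $1$, hence a nonsingleton subgroup of $\Q$.
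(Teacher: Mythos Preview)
Your proof is correct and follows essentially the same route as the paper's: Lemma \ref{l:discreteness} forces discreteness, and then the neighborhood basis (\ref{eq:G_base}) with $W=\{e\}$ shows every finite subset lies in a cyclic subgroup. The only minor differences are that for $(2)\Rightarrow(1)$ the paper invokes (OS) (applied to $\Q$, via Example \ref{e:rational}) rather than (DU), and for $(1)\Rightarrow(2)$ it names the resulting property \emph{inductively monothetic} and cites the classification Proposition \ref{p:ind-mon} instead of arguing directly that a torsion-free abelian group of rank $\le1$ embeds in $\Q$.
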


\begin{proof} In Example \ref{e:rational}
we saw that $\Q$ is integrally approximable. Then from
Proposition \ref{p:class(appro-integral)} (OS) if follows
that every nonsingleton subgroup of $\Q$ is integrally
approximable. Thus   (2) \implies (1).

We have to show  $(1)\implies (2)$:\quad
Thus we assume (1). In particular, $G$ is nonsingleton.
Since the subgroup $G_0$ is compact, Lemma \ref{l:discreteness}
applies and shows that $G$ is discrete.
Then by Equation \ref{eq:G_base} in $\cg(G)$ the element
$G$ has a basis of neighborhoods
$\mathcal U(G; F, \{0\})=\{H\in\cg(G): F\subseteq H\}$ as
$H$ ranges through the finite subsets of $G$.
Since $G$ is integrally approximable, there exists
a $Z\in \cf(G)$ such that $Z\in\mathcal U(G;F,\{0\})$,
that is, $F\subseteq Z$. Then $\langle F\rangle$ is infinite
cyclic as a subgroup of a
group $\cong\Z$. Therefore $G$ is discrete, torsion-free,
 and inductively monothetic.
Then Proposition \ref{p:ind-mon} shows that $G$ is isomorphic to
a subgroup of $\Q$.
\end{proof}

\section{Necessary conditions}
\label{s:necessary}

For the remainder  of the effort to classify integrally approximable
groups we may therefore concentrate on nondiscrete groups, and
indeed on locally compact groups $G$ whose identity component
$G_0$ is noncompact.

\subsection{Background  on abelian
                locally compact groups}
\label{ss:commutativity}

We first point out why we have to focus on commutative
locally compact groups.
Indeed in \cite[Proposition 3.4]{Harp1} the
following fact was established:

\begin{prop}\label{space-abelian-closed}
Let  $G$  be  a  locally  compact  group.
Then the  space $\cab{G}$  of closed
abelian subgroups of  $G$  is  closed  in $\c{G}$.
\end{prop}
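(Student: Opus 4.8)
The plan is to show that the complement of $\cab{G}$ in $\cg(G)$ is open, i.e.\ that if a closed subgroup $L \le G$ is \emph{not} abelian, then some Chabauty-basic neighborhood $\mathcal U(L;K,W)$ of $L$ consists entirely of nonabelian closed subgroups. First I would fix a witness to noncommutativity: choose $a,b \in L$ with $aba^{-1}b^{-1} \ne e$, and then pick an identity neighborhood $W_0$ small enough that $aba^{-1}b^{-1} \notin W_0$. The key point is a continuity/perturbation argument: if $L'$ is Chabauty-close to $L$, then $L'$ contains elements $a', b'$ that are close (in the sense controlled by a conjugation-stable identity neighborhood) to $a,b$ respectively, and one wants to conclude that $a'b'(a')^{-1}(b')^{-1}$ is then close to $aba^{-1}b^{-1}$, hence nonzero, hence $L'$ is nonabelian.

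The technical heart is choosing the compact set $K$ and the identity neighborhood $W$ in $\mathcal U(L;K,W)$ correctly so that membership of $L'$ in this neighborhood really does force $L'$ to contain good approximants $a',b'$ to $a,b$. Concretely: from the definition \eqref{eq:Chabauty_base}, if $L' \in \mathcal U(L;K,W)$ with $a,b \in K$, then $a \in WL'$ and $b \in WL'$, so there are $a',b' \in L'$ with $a \in Wa'$, $b \in Wb'$. Now I would use the joint continuity of the commutator map $(x,y)\mapsto xyx^{-1}y^{-1}$ on $G\times G$, together with continuity of multiplication, to find a symmetric identity neighborhood $W$ (depending on $a,b$ and on $W_0$, and chosen inside a fixed compact neighborhood of $e$ so that everything stays in a compact set) such that: whenever $a \in Wa'$ and $b \in Wb'$, the commutator $a'b'(a')^{-1}(b')^{-1}$ lies in $(aba^{-1}b^{-1})W_0$, which does not contain $e$. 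Hence $a'b'(a')^{-1}(b')^{-1}\ne e$, so $L'$ is not abelian. Taking $K = \{a,b,a^{-1},b^{-1}\}$ (or a slightly larger compact set if needed to absorb the neighborhood manipulations) and this $W$ gives the desired open neighborhood of $L$ inside the complement of $\cab{G}$.

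The main obstacle I anticipate is the bookkeeping in the non-abelian (and merely locally compact, not metric) setting: one must be careful that the identity neighborhoods used to compare $a$ with $a'$ and $b$ with $b'$ are the \emph{same} $W$ appearing in $\mathcal U(L;K,W)$, and that conjugating by the (a priori unbounded) elements $a',b'$ does not destroy the smallness estimates. The standard fix is to first restrict attention to a compact symmetric neighborhood $V$ of $e$, note that $a' \in Va$ and $b' \in Vb$ forces $a',b'$ to lie in the fixed compact set $Va \cup Vb$, and then use uniform continuity of the commutator map on the compact set $(Va\cup Vb)\times(Va\cup Vb)$ to extract the required $W \subseteq V$. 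Once this compactness reduction is in place, the estimates are routine. I would organize the write-up as: (i) fix $a,b,W_0$; (ii) the compactness reduction producing $V$ and the uniform continuity statement; (iii) extract $W$; (iv) verify $\mathcal U(L;K,W)\cap \cab{G}=\emptyset$.
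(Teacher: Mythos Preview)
Your argument is correct: showing that the complement of $\cab{G}$ is open by fixing a noncommuting pair $a,b\in L$, using continuity of the commutator map at $(a,b)$ to find a symmetric identity neighborhood $W$ (contained in a fixed compact $V$) so that $a'\in Wa$, $b'\in Wb$ forces $[a',b']\ne e$, and then taking $K=\{a,b\}$ so that any $L'\in\mathcal U(L;K,W)$ produces such $a',b'\in L'$, is exactly the standard proof. One small simplification: you do not need uniform continuity on $(Va)\times(Vb)$, only continuity of the commutator map at the single point $(a,b)$; the compact $V$ is still useful to guarantee that the approximants $a',b'$ stay in a controlled set, but no uniformity is required.

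As for the comparison with the paper: the paper does not give its own proof of this proposition. It simply records the statement and cites \cite[Proposition 3.4]{Harp1} (Bridson--de la Harpe--Kleptsyn) for it. Your direct argument is essentially the one found in that reference, so there is no substantive divergence to discuss.
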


We have $\cf(G)\subseteq\cab{G}$ and thus
$\overline{\cf(G)}\subseteq \cab{G}$.
Accordingly, in view of Definition \ref{integral-subgroups}
we therefore have

\begin{cor} \label{focus-abelian}
An integrally approximable locally compact group is abelian.
\end{cor}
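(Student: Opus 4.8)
The plan is to exploit the fact, recorded in Proposition~\ref{space-abelian-closed}, that $\cab{G}$ is a \emph{closed} subset of $\cg(G)$. First I would note that every subgroup of $G$ isomorphic to $\Z$ is in particular abelian, so that $\cf(G)\subseteq\cab{G}$; taking closures in the compact space $\cg(G)$ and invoking Proposition~\ref{space-abelian-closed} yields $\overline{\cf(G)}\subseteq\cab{G}$.

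Now suppose $G$ is integrally approximable. By Definition~\ref{integral-subgroups} this means $G\in\overline{\cf(G)}$, and hence $G\in\cab{G}$ by the inclusion just established. But $G\in\cab{G}$ says precisely that $G$, viewed as a closed subgroup of itself, is abelian. This is exactly the assertion of the corollary.

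No genuine obstacle arises in this deduction: the entire substance is carried by Proposition~\ref{space-abelian-closed}, i.e.\ by the (nontrivial) fact that a Chabauty limit of abelian closed subgroups is again abelian. Given that input, the corollary is immediate, and it is really only a matter of chasing the definitions of "integrally approximable" and of the Chabauty closure. One could, if desired, phrase the argument contrapositively --- a non-abelian $G$ cannot lie in $\overline{\cf(G)}$ because $G\notin\cab{G}$ while $\overline{\cf(G)}\subseteq\cab{G}$ --- but the direct formulation above is the most economical.
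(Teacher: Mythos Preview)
Your argument is correct and is precisely the paper's own: observe $\cf(G)\subseteq\cab{G}$, take closures using Proposition~\ref{space-abelian-closed} to get $\overline{\cf(G)}\subseteq\cab{G}$, and conclude that $G\in\overline{\cf(G)}$ forces $G\in\cab{G}$. There is nothing to add.
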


Thus we now focus on locally compact abelian groups and their
duality theory. As a consequence we shall henceforth write
the groups we discuss in additive notation.
An example is the following result of Cornulier's
(see \cite{Cornulier-LCA-Chabauty}, Theorem 1.1):

\begin{prop}[Pontryagin-Chabauty Duality] \label{Pon-Cha-duality}
Let  $G$  be  an  abelian  locally  compact  group. Then  the
annihilator map
\begin{equation*}
\ H\mapsto H^\bot : \c{G}\to \cbig{\hat G}
\end{equation*}
is  a homeomorphism.
\end{prop}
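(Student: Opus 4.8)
The plan is to prove that the annihilator map $\beta\colon H\mapsto H^\perp$ from $\cg(G)$ to $\cg(\hat G)$ is a homeomorphism by exhibiting the natural candidate for an inverse and checking continuity. First I would recall the standard Pontryagin duality facts: the map $H\mapsto H^\perp$ is an inclusion-reversing bijection between the closed subgroups of $G$ and the closed subgroups of $\hat G$, with $H^{\perp\perp}$ corresponding to $H$ under the canonical isomorphism $G\cong\hat{\hat G}$; also $H^\perp\cong\widehat{G/H}$ and $\hat G/H^\perp\cong\hat H$. Thus on the level of sets $\beta$ is already a bijection with inverse (again up to the canonical identification $G=\hat{\hat G}$) the annihilator map $\cg(\hat G)\to\cg(\hat{\hat G})=\cg(G)$. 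Since $\cg(G)$ and $\cg(\hat G)$ are compact Hausdorff (as noted in Section~\ref{s:prelim}), it suffices to prove that $\beta$ is continuous; continuity of the inverse is then the same statement applied to $\hat G$ in place of $G$, and a continuous bijection between compact Hausdorff spaces is automatically a homeomorphism.

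For continuity, I would work directly with the neighborhood basis \eqref{eq:Chabauty_base}. Fix $H\in\cg(G)$ and a basic neighborhood $\mathcal U(H^\perp;Q,V)$ of $H^\perp$ in $\cg(\hat G)$, where $Q\subseteq\hat G$ is compact and $V$ is an identity neighborhood in $\hat G$; I must produce a basic neighborhood $\mathcal U(H;K,W)$ of $H$ in $\cg(G)$ whose $\beta$-image lands inside $\mathcal U(H^\perp;Q,V)$. The key is to translate the two defining conditions $L^\perp\cap Q\subseteq V H^\perp$ and $H^\perp\cap Q\subseteq V L^\perp$ into statements about $L$ and $H$ that can be controlled by requiring $L\in\mathcal U(H;K,W)$. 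Here one uses that characters are uniformly continuous: a compact set $Q\subseteq\hat G$ and a small identity neighborhood $V$ determine, via the evaluation pairing, a compact set $K\subseteq G$ and an identity neighborhood $W\subseteq G$ such that "$\chi$ is close to $H^\perp$ relative to $Q$'' is implied by "$\chi$ annihilates the subgroup generated by $K$ up to $W$-errors,'' and dually. Concretely, an identity neighborhood in $\hat G$ can be taken of the form $\{\chi : |\chi(k)-1|<\varepsilon \text{ for } k\in K\}$ for suitable finite-or-compact $K\subseteq G$ and $\varepsilon>0$, and this is exactly the kind of condition that the Chabauty proximity $L\in\mathcal U(H;K,W)$ lets one verify for $L^\perp$ versus $H^\perp$ after enlarging $K$ and shrinking $W$.

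The main obstacle I anticipate is precisely this bookkeeping: converting the Chabauty closeness of $L$ to $H$ in $G$ into Chabauty closeness of $L^\perp$ to $H^\perp$ in $\hat G$, because the annihilator operation is inclusion-reversing and the two are "dual" in a way that intertwines the roles of compact sets and identity neighborhoods. One must be careful that the compact set $Q\subseteq\hat G$ is not arbitrary — but since every compact subset of $\hat G$ is contained in a set of the form $\{\chi:|\chi(k)-1|\le\delta,\ k\in K_0\}^{\text{(iterated)}}$ is false in general, the cleaner route is to use equicontinuity: by the Ascoli-type description of compact subsets of $\hat G$, $Q$ is equicontinuous on $G$, so there is an identity neighborhood $W_0\subseteq G$ with $|\chi(w)-1|<\varepsilon$ for all $\chi\in Q$, $w\in W_0$; pairing this with a compact $K\subseteq G$ large enough to "see" $Q$ gives the required $\mathcal U(H;K,W)$. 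I would also invoke, if available from the cited literature, the fact that the double annihilator recovers $H$ continuously, which reduces the whole argument to a single continuity statement rather than two. Once continuity of $\beta$ is in hand, compactness of the domain and Hausdorffness of the range close the proof.
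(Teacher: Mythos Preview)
The paper does not prove this proposition at all: it simply quotes it as Theorem~1.1 of Cornulier's paper \cite{Cornulier-LCA-Chabauty}. So there is no in-paper argument to compare your proposal against; any direct proof you produce is automatically ``different'' from what the paper does.

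On its own merits, your strategy is the right one. The reduction you set up is sound: $H\mapsto H^\perp$ is a bijection by classical Pontryagin duality, both $\cg(G)$ and $\cg(\hat G)$ are compact Hausdorff, so it is enough to prove continuity of $\beta$; and by applying the same statement to $\hat G$ you get continuity of the inverse for free (or, as you note, a continuous bijection between compact Hausdorff spaces is already a homeomorphism). The tools you single out for the continuity step are also the correct ones: compact subsets $Q\subseteq\hat G$ are equicontinuous, and basic identity neighborhoods in $\hat G$ have the form $N(K,\varepsilon)=\{\chi:|\chi(k)-1|<\varepsilon\ \text{for all}\ k\in K\}$ for compact $K\subseteq G$. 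What remains is exactly the bookkeeping you flag: from $L\cap K\subseteq WH$ and $H\cap K\subseteq WL$ one must manufacture both inclusions $L^\perp\cap Q\subseteq VH^\perp$ and $H^\perp\cap Q\subseteq VL^\perp$. This can be done, but it is not a one-line computation; the inclusion-reversal means that ``$L$ has enough points near $H$'' must be turned into ``$L^\perp$ does not have too many points far from $H^\perp$'', and this is where the equicontinuity of $Q$ and a judicious choice of $K$ (large enough that characters in $Q$ which differ on $K$ are already separated in $\hat G$) have to be combined carefully. Your sketch gestures at this but does not carry it out; if you want a self-contained proof rather than a citation, that paragraph is the one you actually have to write.
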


This will allow us to apply the so-called {\it annihilator mechanism}
as discussed e.g. in \cite{hofmorr}, 7.12 ff., pp.314 ff..

What will be relevant in our present context is a main
structure theorem for abelian groups (see \cite{hofmorr},
Theorem 7.57, pp. 345 ff.).

\begin{prop} \label{p:vector-splitting} Every
locally compact abelian group  $G$ is algebraic\-al\-ly and topologically of
the form $G=E\oplus H$ for a subgroup $E\cong \R^n$ and a locally compact
abelian subgroup $H$ which has the following properties
\begin{itemize}
\item[\rm(a)] $H$ contains  a compact  subgroup which is open in $H$.
\item[\rm(b)] $H$ contains $\comp(G)$.
\item[\rm(c)] $H_0=(\comp(G))_0=\comp(G_0)$ is the unique maximal
              compact connected subgroup of $G$.
\item[\rm(d)] The subgroup $G_1\defi G_0+\comp(G)$ is an open,
              hence closed, fully characteristic
              subgroup which is isomorphic to $\R^n\times \comp(G)$.
\item[\rm(e)] $G/G_1$ is a discrete torsion-free group and
              $G_1$ is the smallest open subgroup with this property.
\end{itemize}
 \end{prop}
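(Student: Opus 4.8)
The plan is to separate off the one genuinely deep ingredient --- the vector group splitting theorem --- and then obtain (a)--(e) by elementary bookkeeping with identity components and compact subgroups. First I would establish (this is precisely Theorem~7.57 of \cite{hofmorr}) that $G$ decomposes, algebraically and topologically, as $G=E\oplus H$ with $E\cong\R^n$ for some integer $n\ge0$ and $H$ possessing a compact open subgroup $B$. The customary route is: the identity component $G_0$ is a connected locally compact abelian group, hence $G_0\cong\R^n\times C$ with $C$ compact connected (via Pontryagin duality, by analysing the discrete torsion-free dual $\widehat{G_0}$); since $G/G_0$ is totally disconnected and locally compact it has a compact open subgroup, so $G$ has an open subgroup $D\supseteq G_0$ with $D/G_0$ compact; and the vector subgroup $\R^n\subseteq G_0$, being divisible with no nontrivial compact subgroup, splits off as a direct summand of $D$, and then of all of $G$ because $G/D$ is discrete, yielding $G=E\oplus H$ with $B\defi D\cap H$ compact open in $H$. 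This is the hard part; everything below is routine once the splitting is in hand.

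With $G=E\oplus H$ fixed, statement~(a) is immediate. Since $E\cong\R^n$ has no nontrivial compact subgroup, the projection $G\to E$ annihilates every compact subgroup of $G$, so every compact subgroup of $G$ lies in $H$ and $\comp(G)=\comp(H)\subseteq H$, which is~(b). As $B$ is one such compact subgroup, $\comp(H)$ is an open subgroup of $H$; and $H_0\subseteq B$ (a connected set containing the identity lies in every open subgroup), so $H_0$ is compact connected and $G_0=E\times H_0\cong\R^n\times H_0$.

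For~(c): $\comp(G_0)=\comp(\R^n\times H_0)=\{0\}\times H_0=H_0$, and $(\comp(G))_0=(\comp(H))_0=H_0$ because $\comp(H)$ is open in $H$ and contains $B\supseteq H_0$; moreover any compact connected subgroup $M$ of $G$ lies in $G_0\cong\R^n\times H_0$ by connectedness and projects trivially to $\R^n$ by compactness, hence $M\subseteq H_0$, so $H_0$ is the unique maximal compact connected subgroup. For~(d): since $H_0\subseteq B\subseteq\comp(H)$ one gets $G_1=G_0+\comp(G)=(E\times H_0)+(\{0\}\times\comp(H))=E\times\comp(H)\cong\R^n\times\comp(G)$, which is open --- hence closed --- because $\comp(H)$ is open in $H$, and fully characteristic because every continuous endomorphism of $G$ maps the connected set $G_0$ into $G_0$ and maps compact subgroups to compact subgroups, hence carries $G_1$ into $G_1$. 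For~(e): $G/G_1\cong H/\comp(H)$ is discrete since $\comp(H)$ is open; it is torsion-free because $kx\in\comp(H)$ forces $\langle x\rangle+K$, with $K$ a compact subgroup containing $kx$, to be a finite union of cosets of $K$ and hence relatively compact, so $x\in\comp(H)$; and $G_1$ is the smallest open subgroup with this property, since any open subgroup $U$ with $G/U$ torsion-free contains $G_0$ and contains each $c\in\comp(G)$ (the image of the compact group $\gen{c}$ in the discrete group $G/U$ is finite, hence trivial), whence $U\supseteq G_0+\comp(G)=G_1$.

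The single real obstacle is the vector group splitting of the first step --- equivalently, the structure theorem for connected locally compact abelian groups together with the injectivity of $\R^n$ in the category of locally compact abelian groups; the verification of (a)--(e) afterwards is merely a matter of pushing the decomposition $G=E\oplus H$ through the definitions of $G_0$ and $\comp(\,\cdot\,)$.
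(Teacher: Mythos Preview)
The paper does not give its own proof of this proposition: it is quoted as a known structure theorem, with the preamble ``a main structure theorem for abelian groups (see \cite{hofmorr}, Theorem~7.57, pp.~345 ff.)'' standing in for the argument. Your proposal is correct and in fact supplies considerably more detail than the paper does; you identify the same source for the hard step (the vector group splitting $G=E\oplus H$ with $E\cong\R^n$ and $H$ having a compact open subgroup) and then verify (a)--(e) by straightforward manipulations with $G_0$ and $\comp(\cdot)$, which is exactly the intended reading of the citation.
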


\begin{gather}
\begin{aligned}
\xymatrix{
&&&G \ar@{-}[dr]  \ar@{-}[dl]\\
&&G_1 \ar@{-}[dr] \ar@{-}[dl]&&H\ar@{-}[dl]\\
&G_0  \ar@{-}[dr] \ar@{-}[dl] &&C\ar@{-}[dl]\\
E\ar@{-}[dr]&&C_0 \ar@{-}[dl]\\
&0
}\end{aligned}
\label{Fig1}
\end{gather}
\begin{center}
{$C=\comp(G),\quad G_1=G_0+C=E\oplus C.$}
\end{center}


\subsection{Necessity}
\label{ss:nec}

These results allow us to narrow our scope onto integrally
approximable groups  $G$ further and to derive necessary
conditions for $G$ to be integrally approximable.

\begin{prop} \label{p:vector-splitting-int-appr} Every nondiscrete
integrally approximable locally compact abelian group  $G$
is algebraically and topologically of
the form $G=E\oplus\comp(G)$ for a subgroup $E\cong \R$ and
the locally compact abelian subgroup $\comp(G)$.
 \end{prop}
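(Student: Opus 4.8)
\textit{Plan.} The idea is to feed the vector‑group splitting of Proposition~\ref{p:vector-splitting} into the closure properties (OS) and (QG) of Proposition~\ref{p:class(appro-integral)}, using Example~\ref{p:vector-groups} and Lemma~\ref{l:G=Z} as the two obstructions. First, by Corollary~\ref{focus-abelian} the group $G$ is abelian, and since $G$ is assumed nondiscrete, Lemma~\ref{l:discreteness} forces $G_0$ to be noncompact. Write $G=E\oplus H$ with $E\cong\R^n$ as in Proposition~\ref{p:vector-splitting}, and fix a compact open subgroup $C$ of $H$ (which exists by part~(a)). Because $G_0=E\oplus H_0$ with $H_0=\comp(G_0)$ compact (part~(c)), noncompactness of $G_0$ gives $n\ge 1$.

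Next I would pin down $n=1$. The set $U\defi E\oplus C$ is a subgroup of $G=E\oplus H$; it corresponds to $E\times C$ inside $E\times H$, and since $C$ is open in $H$ it is an open subgroup of $G$, nonsingleton because $n\ge 1$. By Proposition~\ref{p:class(appro-integral)}(OS), $U$ is integrally approximable, and $C$ is a compact subgroup of $U$ with $U/C\cong E\cong\R^n$; hence Proposition~\ref{p:class(appro-integral)}(QG) shows that $\R^n$ is integrally approximable, so $n=1$ by Example~\ref{p:vector-groups}.

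It remains to prove $H=\comp(G)$. Since $\comp(E)=\{0\}$ and the component function respects a topological direct sum ($\comp(A\oplus B)=\comp(A)\oplus\comp(B)$, the nontrivial inclusion coming from the projections mapping $\comp$ into $\comp$), we get $\comp(G)=\comp(H)$, so it suffices to show $\comp(H)=H$. As $C$ is compact and open, one checks routinely that $h\in\comp(H)$ if and only if $h+C$ has finite order in the discrete group $H/C$; thus $\comp(H)=H$ is equivalent to $H/C$ being a torsion group. Suppose it is not, and pick $\bar h\in H/C$ of infinite order. The compact (normal, as $G$ is abelian) subgroup $C$ yields, by Proposition~\ref{p:class(appro-integral)}(QG), that $G/C\cong E\oplus(H/C)\cong\R\times(H/C)$ is integrally approximable. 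Since $H/C$ is discrete, $\gen{\bar h}\cong\Z$ is an open subgroup of $H/C$, so $\R\times\gen{\bar h}\cong\R\times\Z$ is a nonsingleton open subgroup of $G/C$; by Proposition~\ref{p:class(appro-integral)}(OS) it is integrally approximable, contradicting Lemma~\ref{l:G=Z} (with $A=\R$). Hence $H/C$ is torsion, $\comp(H)=H$, and therefore $G=E\oplus H=E\oplus\comp(G)$ with $E\cong\R$.

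I expect the only genuinely delicate point to be choosing the right reduction in the last step, namely quotienting by a \emph{compact open} subgroup of $H$ in order to land in a group of the form $\R\times(\text{discrete})$ where Lemma~\ref{l:G=Z} bites via (OS); the facts that $\comp$ splits over the direct sum and that membership in $\comp(H)$ is detected by finiteness of order in $H/C$ are elementary. One should also record that the directedness of the sum $E\oplus H$ is used only in the topological form needed for ``$U$ open'' and for the identification $G/C\cong\R\times(H/C)$.
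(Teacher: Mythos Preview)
Your proof is correct and uses the same ingredients as the paper (Proposition~\ref{p:vector-splitting}, the closure properties (OS) and (QG) of Proposition~\ref{p:class(appro-integral)}, Example~\ref{p:vector-groups}, and Lemma~\ref{l:G=Z}), only arranged in reverse order: the paper first establishes $G=G_1=G_0+\comp(G)$ by applying Lemma~\ref{l:G=Z} with $A=G_1$ directly inside $G$ (using that $G_1+\langle g\rangle\cong G_1\times\Z$ for any $g\notin G_1$, via parts~(d),(e) of Proposition~\ref{p:vector-splitting}) and only then reduces to $n=1$, whereas you first fix $n=1$ and afterwards pass to the quotient $G/C$ before invoking Lemma~\ref{l:G=Z} with $A=\R$. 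The two routes are essentially the same.
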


\begin{proof} (i) Using the notation of
Proposition \ref{p:vector-splitting} (d) and (e) above we first claim
$G=G_1$. By (d) the subgroup $G_1=G_0+\comp(G)$ is open, and by
(e) the factor group $G/G_1$ is torsion free. Suppose our claim
is false. Then we find an element $g\in G\setminus G_1$. Then
$Z\defi \langle g\rangle$ is  cyclic and $(Z+G_1)/G_1
\cong Z/(Z\cap G_1)$ is  torsion-free by (e), and so $X\cap G_1=\{0\}$.
Then it follows from (d), since $G_1$ is open, that $Z$ is discrete.
Hence $U=G_1 +Z$ is a nonsingleton open subgroup $\cong G_1\times \Z$
by (e). Then by Lemma \ref{l:G=Z} we have $G_1=\{0\}$ which forces
$G$ to be discrete, contrary to our hypothesis.

(ii) Now by Proposition \ref{p:vector-splitting} again,
 we may identify $G$ with $\R^n\times H$ where $H_0$ is
compact and $H=\comp(H)$. Now $H$ contains a compact
open subgroup $U$ (cf.\ the proof of Lemma \ref{l:discreteness})
and $\R^n\times U$ is an open subgroup of $G$ which is nonsingleton
since $G$ is nondiscrete. Hence \break $\R^n\times U$ is integrally
approximable by Proposition \ref{p:class(appro-integral)}.
Then the projection $\R^n\times U\to\R^n$ is covered by part
(QG) of Proposition \ref{p:class(appro-integral)} and thus
we know that $\R^n$ is integrally approximable.
Then Example \ref{p:vector-groups} shows that $n=1$.
\end{proof}

\begin{rem} \label{r:iso} If the relation $G=G_0+\comp(G)$
holds for a locally compact abelian group $G$, then
 $G/G_0\cong \comp(G)/\comp(G)_0$. Moreover,
 $\comp(G)$ is the union of compact subgroups being open
in $\comp(G)$.
\end{rem}

From here on we concentrate on groups of the form
$\R\times H$ where $H$ is an abelian locally compact
group with $H=\comp(H)$.

\begin{defn} \label{d:periodic} We call a locally compact group
$H$ {\it periodic} if it is totally disconnected and satisfies
$H=\comp(H)$.
\end{defn}

Periodic abelian groups have known structure due to Braconnier
(see \cite{Braconnier}; cf.\ also \cite{HHR}).
Indeed, a periodic group $G$ is (isomorphic to) a local product
$$\prod_{p\ \mathrm{prime}}^{\mathrm{loc}}(G_p,C_p)$$
for the $p$-primary components (or $p$-Sylow subgroups)
$G_p$.

\begin{lem} \label{l:monothetic}
Let $G$ be an integrally approximable locally compact
group such that $\comp(G)$ is periodic and compact.
Then $\comp(G)$ is monothetic.
\end{lem}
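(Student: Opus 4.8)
The plan is to exploit the structure theorem for periodic abelian groups together with the splitting $G = \R\times\comp(G)$ from Proposition \ref{p:vector-splitting-int-appr}, and to realize that integral approximability of $G$ transfers to $\comp(G)$ via a projection. Since $G$ is integrally approximable and nondiscrete (because $\comp(G)$ is a nontrivial compact group by Remark \ref{r:immediate}), Proposition \ref{p:vector-splitting-int-appr} gives $G \cong \R\times C$ with $C = \comp(G)$, compact and periodic. The first step is to produce a net $(Z_j)_{j\in J}$ of integral subgroups $Z_j = \Z\cdot(r_j,c_j)$ with $G = \lim_j Z_j$ in $\cg(G)$; this is just the definition of integral approximability applied to $G \cong \R\times C$, and I would record the meaning of convergence using the basic neighborhood sets \eqref{eq:G_base}: for every compact $K\subseteq\R\times C$ and every identity neighborhood $W$, eventually $K\subseteq W + Z_j$.

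Next I would fix a compact neighborhood of the form $W = [-1,1]\times V$ for a fixed symmetric identity neighborhood $V$ in $C$, together with $K = \{0\}\times F$ for an arbitrary finite subset $F$ of $C$; I would show that the approximation condition $K\subseteq W + Z_j$ forces, for each $f\in F$, an equation $f = v + n_f\cdot c_j$ with $v\in V$ and $n_f\in\Z$. So $F \subseteq V + \langle c_j\rangle$ for each large $j$. The key point is then to control $r_j$: taking $K$ also to contain a segment like $\{1\}\times\{0\}$ (or rather, arranging that $1 \in [-1,1] + \Z r_j$ forces $|r_j|$ to be small infinitely often) lets me pass to a subnet on which $r_j\to 0$, which is exactly what pins the discrete subgroup $\Z\cdot(r_j,c_j)$ to projecting "densely enough" onto $C$. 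Concretely, on this subnet the closure $\overline{\langle c_j\rangle}$ in $C$ must come arbitrarily close to all of $C$, and because $C$ is compact I would like to conclude that some single $c_j$ already has $\overline{\langle c_j\rangle} = C$, i.e. $C$ is monothetic.

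The main obstacle is precisely this last deduction: approximability only gives a \emph{net} of elements $c_j$ each generating a possibly proper closed subgroup, and "$\overline{\langle c_j\rangle}$ gets arbitrarily close to $C$" does not formally say any one of them generates $C$. To bridge this I would use that $C$ is compact \emph{and} periodic, so by Braconnier's structure theorem $C \cong \prod_{p}^{\mathrm{loc}}(C_p,C_p)$ with each $C_p$ a compact $p$-group, and a compact group is monothetic iff its dual is a subgroup of $(\Q/\Z)^{\oplus}$-type countable-rank-bounded structure — more usefully, a compact abelian group is monothetic iff it has a dense cyclic subgroup iff its weight is at most $\mathfrak c$ and it embeds in $\T^{w}$ compatibly. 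The cleanest route is: via Pontryagin-Chabauty duality (Proposition \ref{Pon-Cha-duality}) translate the whole convergence into the dual group $\widehat{\R\times C} \cong \R\times\widehat C$, where the annihilators $Z_j^\perp$ are closed subgroups with $(\R\times\widehat C)/Z_j^\perp$ cyclic, and $Z_j^\perp \to \{0\}$; this means $\widehat C$ admits, for every finite subset of itself and every index, a quotient that is cyclic and separates that subset — and I would argue this forces $\widehat C$ to be \emph{reduced to} a group all of whose finitely generated subgroups lie in a common cyclic subgroup, hence $\widehat C$ is a discrete subgroup of $\Q$ or of $\Q/\Z$-type, and in any case $\widehat C$ is cyclic-by-the-structure-theorem-on-duals, giving that $C = \widehat{\widehat C}$ is monothetic. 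I would then cite Proposition \ref{p:ind-mon}(3) to identify the admissible $\widehat C$ and close the argument; the delicate bookkeeping is making the "finite subsets separated by cyclic quotients, uniformly" statement genuinely imply a single cyclic quotient works, and this is where compactness of $C$ (equivalently, discreteness of $\widehat C$) is essential.
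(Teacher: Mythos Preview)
Your duality approach can be made to work, but the sketch has a concrete error at the pivotal step and leaves the main deduction unfinished. The claim that $(\R\times\widehat C)/Z_j^\perp$ is cyclic is false: by annihilator duality this quotient is $\widehat{Z_j}\cong\T$. What you actually want is the restriction to the discrete factor: $(\{0\}\times\widehat C)\cap Z_j^\perp=\{0\}\times\{\chi:\chi(c_j)=0\}$, so that evaluation at $c_j$ gives a homomorphism $\widehat C\to\T$ with kernel $\langle c_j\rangle^\perp$. Now take for the compact test set not merely a finite subset but a finite \emph{subgroup} $F=\langle\chi_1,\chi_2\rangle$ of the torsion group $\widehat C$; then $Z_j^\perp\to\{0\}$ forces $F\cap\langle c_j\rangle^\perp=\{0\}$ eventually, so $F$ injects into $\T$ and is therefore cyclic. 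This yields local cyclicity of $\widehat C$, hence $\widehat C\hookrightarrow\Q/\Z$, hence $C$ monothetic. Your phrase ``a quotient that is cyclic and separates that subset'' is exactly the weak point: separating a finite set of points does not by itself give injectivity on the subgroup they generate, and the quotient in question is not cyclic anyway.

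The paper avoids all of this with a short contrapositive. If $C=\comp(G)$ is not monothetic then, since $\widehat C$ is torsion, some $p$-primary component has $p$-rank at least $2$, so $C$ surjects onto $\Z(p)^2$. By Proposition~\ref{p:vector-splitting-int-appr} we have $G\cong\R\times C$, and hence $\R\times\Z(p)^2$ is a quotient of $G$ by a compact kernel; Proposition~\ref{p:class(appro-integral)}(QG) would then make $\R\times\Z(p)^2$ integrally approximable, contradicting Example~\ref{e:R-times-four}. This route leverages the closure property (QG) and the already-established counterexample, and sidesteps any direct manipulation of approximating nets or annihilators.
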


\begin{proof} Write $H=\comp(G)$. Then we may identify
$H$ with the product $\prod_p H_p$ of its compact $p$-primary
components (see also \cite{hofmorr}, Proposition 8.8(ii)).
A compact group $H$ is monothetic iff there is a morphism
$\Z\to H$ with dense image iff (dually) there is an injective morphism
$\hat H\to \T$. As $H$ is totally disconnected, $\hat H$ is
a torsion group (see \cite{hofmorr}, Corollary 8.5, p.~377), and so the group
$\hat H$ is embeddable into $\T$ iff it is embeddable into
the torsion group $\Q/\Z=\bigoplus_p\Z(p^\infty)$ of $\T$ (see
\cite{hofmorr}, Corollary A1.43(ii) on p.~694) iff each $\hat H_p$
is embeddable into $\Z(p^\infty)$. Hence $H$ is monothetic iff
 each $H_p$ has $p$-rank $\le 1$. By way of contradiction suppose
that this is not the case and that there is a prime $p$ such that
the $p$-rank of $H_p$ is $\ge2$. So  the compact
group $H/p\.H$ has exponent $p$ and  is isomorphic to a power
$\Z(p)^I$ with $\card I\ge2$. Therefore we have a projection of
$H/p\.H\cong\Z(p)^I$ onto $\Z(p)^2$. This provides a surjective
morphism $H\to H/p\.H\cong \Z(p)^I\to \Z(p)^2$.
Thus Proposition \ref{p:vector-splitting-int-appr}
tells us that $G=\R\times H$ has a quotient group  $\R\times\Z(p)^2$ modulo
a compact kernel. By Proposition \ref{p:class(appro-integral)} (QG)
this quotient group is integrally approximable
 which we know to be impossible by
Example \ref{e:R-times-four}. This contradiction proves
the lemma.
\end{proof}

\begin{lem} \label{l:ind-mon} Let $G$ be a totally disconnected
locally compact abelian group  satisfying $G=\comp(G)$.
Assume that every compact open subgroup of $G$ is monothetic.
Then $G$ is inductively monothetic.
\end{lem}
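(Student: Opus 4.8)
The plan is to reduce the statement to a finite-subset condition and then promote finitely many compact open monothetic subgroups to a single one. Let $F = \{g_1,\dots,g_k\}$ be a finite subset of $G$; I must produce $g\in G$ with $\gen F = \gen g$. Since $G = \comp(G)$, each $g_i$ lies in some compact subgroup of $G$, and since $G$ is totally disconnected locally compact, $G$ has a neighborhood base at $0$ consisting of compact open subgroups. Using this, I would first find a \emph{single} compact open subgroup $V$ containing $F$: take compact open subgroups $V_i\ni g_i$ (each $g_i$ generates a compact, hence relatively compact, subgroup sitting inside some compact open subgroup because $\comp(G)$ is the union of compact subgroups that are open in $\comp(G)$, cf.\ Remark~\ref{r:iso}), and let $V = V_1 + \dots + V_k$, which is again a compact open subgroup since a finite sum of compact open subgroups of an abelian group is a compact open subgroup. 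Then $\gen F \subseteq V$.

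Now $V$ is compact open, hence monothetic by hypothesis: there is $v\in V$ with $\gen v = V$. The remaining task is to descend from the generator of $V$ to a generator of the (possibly much smaller) closed subgroup $A \defi \gen F \le V$. Here I would invoke the structural fact underlying Proposition~\ref{p:ind-mon}: a compact monothetic group $V$ that is totally disconnected is a local product $\prod_p^{\mathrm{loc}}(V_p,V_p)$ — i.e.\ an honest product $\prod_p V_p$ of its $p$-primary components — with each $V_p$ of $p$-rank $\le 1$, so each $V_p$ is one of $\Z(p^n)$ ($n\le\infty$) or $\Z_p$. A closed subgroup $A$ of such a $V$ splits as $A = \prod_p A_p$ with $A_p \le V_p$ closed; and every closed subgroup of $\Z(p^n)$, $\Z(p^\infty)$, or $\Z_p$ is again of the same form and in particular again of $p$-rank $\le 1$, hence monothetic. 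Therefore $A$ is itself a compact totally disconnected group all of whose $p$-primary components have $p$-rank $\le 1$, so $A$ is monothetic by the criterion recalled in the proof of Lemma~\ref{l:monothetic} (embeddability of $\hat A$ into $\T$ reduces to embeddability of each $\hat A_p$ into $\Z(p^\infty)$). Pick $g\in A$ with $\gen g = A = \gen F$; this completes the proof.

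The main obstacle is the descent step: it is \emph{not} true that an arbitrary closed subgroup of a monothetic group is monothetic in general (as the paper itself notes, $\T^2$ is monothetic but $\gen{(t,0),(0,t)}$ is not). The argument genuinely uses total disconnectedness twice — once to split $V$, and hence $A$, as a product over primes, and once to ensure that the $p$-primary pieces $V_p$ have $p$-rank $\le 1$ (which is exactly what monotheticity of the compact totally disconnected group $V$ buys us), a property that is then inherited by the closed subgroups $A_p$. An alternative, perhaps cleaner, route for the descent is dual: apply Pontryagin duality, where $A = V/W$-type quotients on the character side, and a closed subgroup $A\le V$ corresponds to a quotient $\hat V \twoheadrightarrow \hat A$; since $\hat V$ is a torsion abelian group embeddable in $\Q/\Z$ (i.e.\ every $p$-component has $p$-rank $\le 1$), so is every quotient, whence $\hat A$ embeds in $\T$ and $A$ is monothetic. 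I would present whichever of these two formulations is shorter given the ambient notation, but the content is the same.
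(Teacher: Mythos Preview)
Your proof is correct and follows essentially the same approach as the paper: find a compact open subgroup $V$ containing the finite set $F$ (the paper uses directedness of the family of compact open subgroups, you use that a finite sum of compact open subgroups is compact open---equivalent observations), and then argue that the closed subgroup $\gen F\le V$ is monothetic. The only difference is expository: where you explicitly carry out the descent via $p$-primary decomposition (or, dually, via quotients of subgroups of $\Q/\Z$), the paper simply invokes the remark following Proposition~\ref{p:ind-mon} that a totally disconnected compact monothetic group is inductively monothetic---your argument is precisely a proof of that remark.
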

\begin{proof}
Let $F$ be a finite subset of $G$; we must show that
$\gen F$ is monothetic.
Let $\mathcal S$ be the  $\subseteq$-directed set of
all compact open (and therefore monothetic) subgroups of $G$.
Since $G=\bigcup{\mathcal S}$ (see Remark \ref{r:iso})
for each $x\in F$ there is a $C_x\in{\mathcal S}$ such that
$x\in C_x$. Since ${\mathcal S}$ is directed and $F$ is finite
there is an $K\in{\mathcal S}$ such that
$\bigcup_{x\in F}C_x\subseteq K$. Then $F\subseteq K$ and so
$\gen F\subseteq K$. Since $G$ is totally disconnected, the same
is true for $K$. By hypothesis, $K$ is monothetic, and so
the comment following Proposition \ref{p:ind-mon}
shows that $K$ is inductively
monothetic, whence $\gen F$ is monothetic.
\end{proof}

\begin{cor} \label{c:ind-mon-2} Let $G$ be an integrally
approximable group such that $\comp(G)$ is periodic.
Then $G/G_0\cong \comp(G)$ is inductively monothetic.
\end{cor}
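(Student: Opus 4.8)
The plan is to reduce the assertion to the two lemmas immediately preceding it. By Corollary \ref{focus-abelian} the group $G$ is abelian, and since we are in the nondiscrete setting of this section, Proposition \ref{p:vector-splitting-int-appr} applies and gives $G = \R \oplus \comp(G)$. In particular $G = G_0 + \comp(G)$; and because $\comp(G)$ is periodic it is totally disconnected, so $\comp(G)_0 = \{0\}$. Hence Remark \ref{r:iso} yields the stated isomorphism $G/G_0 \cong \comp(G)/\comp(G)_0 = \comp(G)$, and it remains only to prove that $\comp(G)$ is inductively monothetic.

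Set $H = \comp(G)$, a totally disconnected locally compact abelian group with $H = \comp(H)$. By Lemma \ref{l:ind-mon} it is enough to show that every compact open subgroup $K$ of $H$ is monothetic. Fix such a $K$. The key step is to pass to the subgroup $G' \defi \R \oplus K$ of $G = \R \oplus H$: since $K$ is open in $H$, the group $G'$ is open in $G$, and it is nonsingleton as it contains $\R$, so by Proposition \ref{p:class(appro-integral)}(OS) the group $G'$ is again integrally approximable.

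Next I would compute $\comp(G')$. Since $\comp$ commutes with finite direct products and $\comp(\R) = \{0\}$ (by Weil's Lemma), while $\comp(K) = K$ ($K$ being compact), one gets $\comp(G') = \comp(\R) \oplus \comp(K) = K$. Being a subgroup of the totally disconnected group $H$, the group $K$ is totally disconnected, and $K = \comp(K)$; thus $\comp(G') = K$ is periodic and compact. Now Lemma \ref{l:monothetic}, applied to the integrally approximable group $G'$, shows that $\comp(G') = K$ is monothetic. As $K$ was an arbitrary compact open subgroup of $H$, Lemma \ref{l:ind-mon} gives that $H = \comp(G)$ is inductively monothetic, which completes the argument.

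The main obstacle is the localization trick in the middle paragraph: one must recognize that although $\comp(G)$ itself need only be periodic (and not compact), one can still reach Lemma \ref{l:monothetic} by replacing $G$ with the open subgroup $\R \oplus K$, for which the union-of-compact-subgroups part collapses to the single compact group $K$. Beyond that, the only care needed is the routine bookkeeping: that $\comp$ respects the finite product $\R \oplus K$, and that periodicity of $\comp(G)$ forces $\comp(G)_0 = \{0\}$, so that the quotient $G/G_0$ is genuinely $\comp(G)$ rather than $\comp(G)/\comp(G)_0$.
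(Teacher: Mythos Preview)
Your proof is correct and follows essentially the same route as the paper: pick a compact open subgroup of $\comp(G)$, pass to the open subgroup $\R\oplus K$ via (OS), apply Lemma~\ref{l:monothetic} to see that $K$ is monothetic, and then invoke Lemma~\ref{l:ind-mon}. You have simply filled in more detail than the paper does, in particular the verification that $\comp(\R\oplus K)=K$ and the justification of the isomorphism $G/G_0\cong\comp(G)$ via Remark~\ref{r:iso}.
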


\begin{proof} Let $U$ be a compact-open subgroup of
$\comp(G)$. Then $G_0+U\cong \R\times U$ is an open subgroup of
$G=G_0+\comp(G)=\R\times\comp(G)$. Hence it is integrally
approximable by Proposition \ref{p:class(appro-integral)} (OG).
 So $U$ is monothetic by Lemma \ref{l:monothetic}. Now
Lemma \ref{l:ind-mon} shows that $\comp(G)$ is inductively
monothetic.
\end{proof}

Now we have a necessary condition on a locally compact group to
be integrally approximable:

\begin{thm} \label{th:necessary} Let $G$ be a nondiscrete integrally
approximable locally compact group. Then
\begin{itemize}
\item[\rm(a)] $G\cong \R\times \comp(G)$ and
\item[\rm(b)] $G/G_0\cong  \comp(G)/\comp(G)_0$ is inductively monothetic.
\end{itemize}
\end{thm}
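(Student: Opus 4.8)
The plan is to assemble this statement from results already proved, the only genuine step being a quotient reduction for part~(b).

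For part~(a) I would simply combine two facts: Corollary~\ref{focus-abelian} says that an integrally approximable group is abelian, and Proposition~\ref{p:vector-splitting-int-appr} then says that a \emph{nondiscrete} such group has the form $G=E\oplus\comp(G)$ with $E\cong\R$, that is, $G\cong\R\times\comp(G)$. In particular $G=G_0+\comp(G)$, whence $G_0=\R\times\comp(G)_0$ with $\comp(G)_0=\comp(G_0)$ by Proposition~\ref{p:vector-splitting}(c), and Remark~\ref{r:iso} gives the isomorphism $G/G_0\cong\comp(G)/\comp(G)_0$ asserted in~(b). So after these citations the whole theorem reduces to showing that $\comp(G)/\comp(G)_0$ is inductively monothetic.

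For part~(b) the temptation is to quote Corollary~\ref{c:ind-mon-2} for $G$ directly, but that corollary requires $\comp(G)$ to be periodic, which fails whenever $\comp(G)_0$ is nontrivial. I would therefore first pass to $G'\defi G/\comp(G)_0$. Since $\comp(G)_0=\comp(G_0)$ is compact, part~(QG) of Proposition~\ref{p:class(appro-integral)} keeps $G'$ integrally approximable; under $G\cong\R\times\comp(G)$ it becomes $\R\times P$ with $P=\comp(G)/\comp(G)_0$. I would then check that $P$ is periodic: it is totally disconnected, being a quotient by an identity component, and each of its elements is the image of an element of $\comp(G)$ — hence of a compact subgroup of $\comp(G)$ — so lies in a compact subgroup of $P$, giving $P=\comp(P)$. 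Since $(r,p)\in\R\times P$ generates a relatively compact subgroup iff $r=0$, we get $\comp(G')\cong P$, which is periodic, and Corollary~\ref{c:ind-mon-2} now applies to $G'$ and yields that $G'/G'_0\cong\comp(G')\cong P$ is inductively monothetic. Finally, since $\comp(G)_0$ is connected, $G'_0=G_0/\comp(G)_0\cong\R$, so $G'/G'_0\cong G/G_0\cong P=\comp(G)/\comp(G)_0$, which is exactly~(b).

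The argument is mostly bookkeeping; the one point that needs attention — and the nearest thing to an obstacle — is recognizing that Corollary~\ref{c:ind-mon-2} is not directly applicable to $G$, so that one must first divide out the maximal compact connected subgroup $\comp(G)_0$, and then verify both that integral approximability survives (immediate from~(QG)) and that the $\comp$ of the quotient has genuinely become periodic.
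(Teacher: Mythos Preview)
Your proposal is correct and follows essentially the same route as the paper: both arguments obtain (a) from Proposition~\ref{p:vector-splitting-int-appr}, then for (b) pass to the quotient $G/\comp(G)_0$ (the paper writes $N=\comp(G_0)=\comp(G)_0$), use (QG) to retain integral approximability, observe that $\comp$ of the quotient is now periodic, and apply Corollary~\ref{c:ind-mon-2}. Your write-up is in fact somewhat more careful than the paper's in verifying that $\comp(G')=P$ is periodic and in flagging explicitly why Corollary~\ref{c:ind-mon-2} cannot be applied to $G$ directly.
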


\begin{gather}
\begin{aligned}
\xymatrix{
&&G \ar@{-}[dr] \ar@{-}[dl]&&\\
&G_0  \ar@{-}[dr] \ar@{-}[dl] &&C\ar@{-}[dl]\\
\R\ar@{-}[dr]&&C_0 \ar@{-}[dl]\\
&0
}\end{aligned}
\label{Fig2}
\end{gather}
\begin{center}
{$C=\comp(G),\quad G=G_0+C=\R\oplus C.$}
\end{center}

\begin{rem} \label{r:detail} The Classification
 of locally  compact inductively monothetic  groups (Proposition \ref{p:ind-mon}) yields that the group
 $G/G_0$ is of type (3).
Indeed, it  is a local product of inductively monothetic $p$-Sylow
subgroups of type $\Z(p^n)$, $\Z(p^\infty)$, $\Z_p$, or $\Q_p$.
\end{rem}

\begin{proof} As is described
 in Proposition \ref{p:vector-splitting-int-appr} $G$
has a compact characteristic  subgroup
$N\defi\comp(G_0)=\comp(G)_0$, the unique largest compact connected
subgroup. So by Proposition \ref{p:class(appro-integral)} (QG),
 the quotient $G/N$ is also integrally approximable, and
$\comp(G/N)$ is totally disconnected and therefore is periodic.
As Corollary \ref{c:ind-mon-2} applies to $G/N$, its
factor $\comp(G/N)$ is inductively monothetic. However,
$G/N\cong \R\times \comp(G/N)=\R\times\comp(G)/N$ and
$G_0\cong \R\times N$ we see that $G/G_0\cong \comp(G/N)$.
Thus $G/G_0$ is inductively monothetic.
\end{proof}

It is noteworthy that there is no limitation on the size of
the compact connected abelian group $\comp(G)_0=\comp(G_0)$.
The locally compact abelian group $\comp(G)$ is an extension of
the compact group $\comp(G)_0$ by an inductively monothetic
group.

\bigskip
\section{Sufficient conditions}
\label{s:suff}

In this section we shall prove the following complement to
Theorem \ref{th:necessary} and thereby complete the proof of the main
theorem formulated in the abstract.

\begin{thm} \label{th:sufficient} Let $G=\R\times H$ for a
locally compact abelian group $H$ satisfying the following conditions:
\begin{itemize}
\item[\rm(a)] $H=\comp(H)$ and
\item[\rm(b)] $H/H_0\cong G/G_0$ is inductively monothetic.
\end{itemize}
Then $G$ is integrally approximable.
\end{thm}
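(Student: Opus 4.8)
The plan is to reduce the problem to increasingly concrete situations by exploiting the preservation properties collected in Proposition \ref{p:class(appro-integral)}, and then to handle the genuinely new case — a real line times a compact connected abelian group — by an explicit construction of approximating copies of $\Z$. First I would peel off the compact connected part: writing $N=\comp(G)_0=H_0$, the group $G/N\cong\R\times(H/N)$ where $H/N$ is periodic (totally disconnected with $\comp=$ itself) and still inductively monothetic; if I can show every such $G/N$ is integrally approximable, I want to lift this back to $G$. The lifting should go through the strict projective limit property (PL): $G$ is the strict projective limit of $G/M$ as $M$ runs over the compact subgroups of $N$ (these form a filter basis with trivial intersection, since $N$ is compact abelian hence pro-Lie and has small subgroups), and each $G/M\cong\R\times\big((H/M)\big)$ where now $(H/M)_0$ is a compact connected Lie group, i.e. a torus $\T^k$. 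So the two cases to settle directly are (I) $G=\R\times P$ with $P$ periodic inductively monothetic, and (II) $G=\R\times\T^k\times P'$ type pieces — but in fact it is cleaner to treat (I$'$) $G=\R\times C$ with $C$ compact connected abelian, since by Remark \ref{r:detail} and the local-product description the general $H$ is built from a compact connected piece and a periodic piece, and a further application of (DU) (directed unions) lets me assemble a local product from its finite sub-local-products.

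For case (I), $P$ periodic inductively monothetic: by (DU) it suffices to approximate $\R\times U$ for $U$ a compact open monothetic subgroup of $P$, because $P=\bigcup U$ is a directed union of such, so $G=\bigcup(\R\times U)$ is a directed union of open subgroups, and (OS) together with (DU) reduces $G\in\overline{\cf(G)}$ to each $\R\times U$ being integrally approximable. Now $U$ is compact monothetic, say $U=\gen{c}$, and I want subgroups $\Z\cong Z_n\le \R\times U$ converging to $\R\times U$ in the Chabauty topology. The candidates are $Z_n=\gen{(\tfrac1n,\,c)}=\Z\cdot(\tfrac1n,c)$. Fix a compact set $K\subseteq\R\times U$ and an identity neighborhood $W$; since $U$ is compact we may take $K=[-R,R]\times U$. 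I must find $n$ with $K\subseteq W+Z_n$: given $(x,u)\in K$, pick $m\in\Z$ with $|x-\tfrac{m}{n}|$ small (easy for $n$ large), and then I need $u-m\cdot c$ to be in the $U$-component of $W$; this is where monotheticity of $U$ enters — $\{k\cdot c : k\in\Z\}$ is dense in $U$, but I need the approximating integer $k$ to be simultaneously close to $nx$. This simultaneous-approximation issue is exactly the expected main obstacle.

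The way around the obstacle, and the heart of the argument, is an iterated-limit / diagonal construction. One should not expect a single sequence $Z_n$ to work; instead use the net structure of $\cg(G)$ and the fact (Remark \ref{countability} and the general theory of iterated limits) that the Chabauty topology makes iterated limits legitimate. Concretely: for a fixed tolerance on the $\R$-coordinate, choose $n$ large; then $\{m\in\Z : |\tfrac{m}{n}-x|<\varepsilon \text{ for some } x\in[-R,R]\}$ contains a full interval of integers of length $\gg 1$, and along any such arithmetic-progression-like set the points $m\cdot c$ are still dense in $U$ (a long enough block of consecutive multiples of $c$ is $\delta$-dense in the compact monothetic $U$, because the orbit closure of $c$ is all of $U$ and $U$ has no proper open subgroup containing $c$'s closure — indeed $\{k c: 0\le k\le N\}$ becomes $\delta$-dense for $N$ large by compactness and a standard Weyl-type/pigeonhole argument on the compact group $U$). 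Choosing $n$ large enough that such a block fits gives $Z_n\in\mathcal U(\R\times U;K,W)$. Finally, to promote case (I$'$), $G=\R\times C$ with $C$ compact connected: here $C$ need not be monothetic as a group but it is a quotient of $\T^{w(C)}$ via a surjection with dense-image dual inside $\Q/\Z$-... actually the cleanest route is to observe $C$ is a strict projective limit of tori $\T^k$ and reduce via (PL) to $G=\R\times\T^k$, which is monothetic-over-$\Z$ in the needed sense by Kronecker's theorem: $(\tfrac1n, \theta_1,\dots,\theta_k)$ with $1,\theta_1,\dots,\theta_k$ rationally independent generates a subgroup whose closure is $\R\times\T^k$ once we also let $n\to\infty$, and the same simultaneous-approximation block argument applies using Weyl equidistribution on $\T^k$. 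Assembling cases (I$'$) and (I) along the local product via (DU), and then the projective limit via (PL), completes the proof; the one delicate point throughout that requires care is verifying that the Chabauty neighborhood condition $K\subseteq W+Z$ can be met with $K$ a large box — reducing to compact $K$ inside the compact factor is routine, but the real-coordinate spreading of $K$ must be absorbed by taking $n$ large, and this is compatible with the density requirement precisely because the block-density threshold $N(\delta)$ in $U$ (or $\T^k$) can be fixed first and $n$ chosen afterwards.
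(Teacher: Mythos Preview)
Your core argument is correct and takes a genuinely different route from the paper's. Both proofs reduce, via (DU) and (PL) from Proposition~\ref{p:class(appro-integral)}, to showing that $\R\times U$ is integrally approximable for $U$ compact monothetic. At that point the paper makes one further reduction: since every compact monothetic group is a quotient of the Bohr compactification $\bZ$, (QG) reduces to the single case $G=\R\times\bZ$, and this is settled by Pontryagin--Chabauty duality (Proposition~\ref{Pon-Cha-duality}): one sets $Z_n=\Z\cdot(\tfrac1n,\id_\T)$, computes the annihilators $Z_n^\perp\subseteq\R\times\T_d$ explicitly, and checks $Z_n^\perp\to\{0\}$ directly. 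Your proof instead attacks $\R\times U$ head-on with the block-density observation: for a generator $c$ of $U$ there is, for each identity neighbourhood $V\subseteq U$, a bound $N(V)$ such that every block $\{kc:a\le k\le a+N(V)\}$ is $V$-dense in $U$; then for $n$ large enough that $2\varepsilon n>N(V)$, every $(x,u)\in[-R,R]\times U$ admits an $m$ with $|x-m/n|<\varepsilon$ and $u-mc\in V$, so $Z_n=\Z\cdot(\tfrac1n,c)\in\mathcal U(G;K,W)$. This is entirely correct (the $V$-density of a single initial block follows from compactness plus density of $\{kc:k\ge0\}$, and translates of a $V$-dense set are $V$-dense), and it buys you an elementary proof that avoids both the Bohr compactification and the Chabauty--duality machinery. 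The paper's route, by contrast, is shorter once those tools are in hand and isolates a single universal test case.

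Two smaller points. First, your reduction scheme is more tangled than it needs to be: the split into a ``periodic'' case (I) and a ``connected'' case (I$'$), followed by an ``assembling'' step, is unnecessary. If you apply (DU) first to pass from $H/H_0$ inductively monothetic to $H/H_0$ compact monothetic (so $H$ is compact), and then (PL) to make $H$ a compact abelian Lie group, you land directly at $H\cong\T^m\times\Z(n)$ (Lemma~\ref{l:lie-case}), which is already monothetic---no separate treatment of the torus and the finite/periodic part is required, and no splitting of $H$ as $H_0\times(H/H_0)$ needs to be justified. Second, the invocation of ``iterated limits'' is a red herring: your block-density argument produces, for each basic Chabauty neighbourhood $\mathcal U(G;K,W)$, a single $Z_n$ lying in it, so no diagonal or iterated-limit construction is actually used.
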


\subsection{Various reductions}
\label{ss:reduct}

We shall achieve the proof by reducing the problem step by step.

Firstly, every inductively monothetic group is the directed
union of monothetic subgroups by Proposition \ref{p:ind-mon}.
and so if $H$ satisfies (b) it is of the form
$H=\bigcup_{i\in I}H_i$ with a directed family of subgroups
$H_i\supseteq H_0$ such that $H_i/H_0$ is compact monothetic.
Then, by Proposition \ref{p:class(appro-integral)} (DU),
$\R\times H$ is integrally approximable if all $\R\times H_i$
are integrally approximable for $i\in I$.

Thus from here on,
in place of condition (b),
we shall assume that $H$ satisfies
\begin{itemize}
\item[(c)] $H/H_0$ is monothetic.
\end{itemize}
 After condition (c), $H$ is compact.

Every locally compact abelian group is a strict
projective limit  of Lie groups.
This applies to $H$. Clearly condition (a) holds for all
quotient groups. If $N$ is a compact normal subgroup of $H$
Then $(H/N)_0=H_0N/N$ and thus $(H/N)/(H/N)_0\cong H/H_0N$,
whence $(H/N)/(H/N)_0$ is a quotient group of $H/H_0$
and is therefore inductively monothetic.
Thus if we can show that all for all abelian Lie groups $H$
satisfying (a) and (c), the groups $\R\times H$
 are integrally approximable, then
 the Proposition \ref{p:class(appro-integral)}(PL)
will show that $\R\times H$ is integrally approximable.
Therefore we need to prove Theorem \ref{th:sufficient} for
a compact Lie group $H$ satisfying (a) and (c).

\begin{lem} \label{l:lie-case} Let $H$ be a compact abelian Lie group
satisfying $\mathrm{(a)}$ and $\mathrm{(c)}$.
Then there is are nonnegative integers $m$ and $n$
$$ H \cong \T^m\times \Z(n).$$
In particular, $H$ is monothetic.
\end{lem}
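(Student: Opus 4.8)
The plan is to read off the shape of $H$ from the structure theory of compact abelian Lie groups together with Pontryagin duality, and then to verify monotheticity directly. First I would note that condition (a) is automatic here: once $H$ is compact it is a compact subgroup of itself, so $H=\comp(H)$ holds trivially, and the only hypothesis doing any work is (c). Since $H$ is a compact abelian \emph{Lie} group, its character group $\hat H$ is a finitely generated discrete abelian group (see \cite{hofmorr}). By the fundamental theorem on finitely generated abelian groups, $\hat H\cong\Z^m\oplus F_0$ for some $m\ge 0$ and some finite abelian group $F_0$; dualizing, and using that a finite abelian group is (non-canonically) isomorphic to its own dual, this gives $H\cong\T^m\times F$ with $F$ finite abelian. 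As $\T^m$ is connected while $F$ is finite, hence totally disconnected, the subgroup $\T^m\times\{0\}$ is connected and open (its index $|F|$ being finite), so it coincides with the identity component $H_0$; therefore $H/H_0\cong F$.

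Now I would bring in condition (c): $H/H_0\cong F$ is monothetic, and a finite group is monothetic precisely when it is cyclic. Hence $F\cong\Z(n)$ with $n=\abs{F}$, and $H\cong\T^m\times\Z(n)$, which is the claimed form.

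It then remains to check the final assertion, that $\T^m\times\Z(n)$ is monothetic. I would exhibit an explicit topological generator: choose $x=(\alpha_1,\dots,\alpha_m)\in\T^m$ with $1,\alpha_1,\dots,\alpha_m$ linearly independent over $\Q$, so that $\gen x=\T^m$ by Kronecker's theorem, and set $g=(x,1+n\Z)$. Scaling the tuple $(1,\alpha_1,\dots,\alpha_m)$ by the nonzero integer $n$ keeps it $\Q$-independent, so $\gen{g^n}=\gen{(nx,0)}=\T^m\times\{0\}$, whence $\T^m\times\{0\}\subseteq\gen g$; since also $g\in\gen g$ and $\gen g$ is a group, subtracting the element $(x,0)\in\T^m\times\{0\}\subseteq\gen g$ from $g$ shows $(0,1+n\Z)\in\gen g$, and $\T^m\times\{0\}$ together with $(0,1+n\Z)$ generate $\T^m\times\Z(n)$. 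Thus $\gen g=\T^m\times\Z(n)$. (Alternatively one can use the dual criterion recalled in the proof of Lemma \ref{l:monothetic}: $\T^m\times\Z(n)$ is monothetic iff $\Z^m\oplus\Z(n)$ admits an injective homomorphism into $\T$, which it does because $\T$, viewed as an abstract abelian group, contains a copy of $\Q/\Z$ with a complementary $\Q$-vector space of infinite dimension.) This last step is the only genuinely delicate point, since a product of two monothetic compact groups need not be monothetic — $\Z(2)\times\Z(2)$ is not — and what rescues us is exactly the connectedness of the factor $\T^m$, which both arguments above exploit.
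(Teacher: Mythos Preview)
Your argument is correct and follows essentially the same route as the paper: deduce $H\cong\T^m\times\Z(n)$ from the structure of compact abelian Lie groups with cyclic component group (the paper simply cites \cite{hofmorr} for this step, whereas you unpack it via the finitely generated dual and the fundamental theorem), and then verify monotheticity. For the latter the paper uses only the dual criterion you mention parenthetically---embedding $\hat H\cong\Z^m\oplus\Z(n)$ into $\T_d\cong\R_d\oplus\Q/\Z$---so your explicit Kronecker generator is a more hands-on alternative that the paper does not give.
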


\begin{proof} $H$ is a compact abelian Lie group such that $H/H_0$
is cyclic. Then $H$ has the form asserted (see e.g.\ \cite{hofmorr},
Proposition 2,42 on p.~48 or Corollary 7.58 (iii) on p. 356).

We have seen in the proof of Lemma \ref{l:monothetic} that a compact group
is monothetic if and only if its character group can be injected
into the discrete circle group $\T_d=\R_d\oplus\Q/\Z$. Since
$\hat H \cong \Z^m\oplus \Z(n)$, this condition is satisfied.
\end{proof}

Thus for a proof of Theorem \ref{th:sufficient} it will suffice
to prove

\begin{lem} \label{l:R-plus-mon} If $H$ is monothetic, then
$\R\times H$ is integrally approximable.
\end{lem}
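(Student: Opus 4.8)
The plan is to produce an explicit net of integral subgroups of $G = \R \times H$ converging to $G$ in the Chabauty topology, exploiting the fact that $H$ is monothetic. Fix a generator $h \in H$, so that $\gen h = H$. Since $G$ is $\sigma$-compact and metrizable (as $H$ is compact, hence second countable being a quotient of... actually $H$ monothetic and compact need not be metrizable, so I would work with nets rather than sequences, or invoke Remark \ref{countability} only when applicable). For each pair $(k, W)$ with $k$ a large real number and $W$ a small identity neighborhood of $H$, I want to find an integer $m$ such that $m \cdot h$ is "$W$-close to" $h$ — more precisely, so that the finitely many multiples $h, 2h, \dots$ needed are approximated. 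The key observation is that since $\gen h$ is dense in $H$, the element $h$ is a limit point of the set $\{m h : m \in \N\}$; in fact, for any identity neighborhood $W$ of $H$ there are arbitrarily large integers $m$ with $(m-1)h \in W$, equivalently $mh \in h + W$.

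The central step is to define, for suitable large $m$, the integral subgroup
\[
S_m = \Z \cdot \left(\tfrac{1}{m},\, h\right) \subseteq \R \times H,
\]
which is isomorphic to $\Z$ because its $\R$-coordinate $\tfrac1m$ is nonzero (so the element has infinite order and generates a discrete subgroup). I claim that along an appropriate net, $S_m \to G$. To check this against the neighborhood base \eqref{eq:G_base}, fix a compact $K \subseteq \R \times H$ and an identity neighborhood $W = W_\R \times W_H$; I must show $K \subseteq W + S_m$ for $m$ chosen appropriately. Since $K$ is compact, it is contained in $[-N, N] \times H$ for some $N$. Given $(t, c) \in K$, I need an integer $j$ with $\tfrac{j}{m} \in t + W_\R$ and $jh \in c + W_H$. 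The first condition forces $j$ to lie in a window of length $m \cdot |W_\R|$ around $mt$, which for large $m$ contains many integers; the second condition asks that among those integers $j$, at least one has $jh$ close to $c$. This is where density of $\gen h = H$ enters: the set $\{j \in \Z : jh \in c + W_H\}$ is syndetic (relatively dense) in $\Z$ with a gap bound depending only on $W_H$ and not on $c$ — this is precisely the statement that a dense cyclic subgroup of a compact group meets every translate of a fixed neighborhood with bounded gaps, which follows from compactness by a standard covering argument. So once $m \cdot |W_\R|$ exceeds that uniform gap bound, every window of integers of that length contains a $j$ satisfying both conditions simultaneously, giving $(t,c) \in W + S_m$.

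The main obstacle is making the "bounded gaps" claim precise and uniform over $c \in H$, and simultaneously over all $(t,c) \in K$, so that a single $m$ works: one must extract from compactness of $H$ a finite $N_0$ such that every interval of $N_0$ consecutive integers $j$ yields some $jh \in c + W_H$, for every $c$. I would prove this by covering $H$ with finitely many translates $c_1 + W_H', \dots, c_r + W_H'$ where $W_H' + W_H' \subseteq W_H$, noting each $c_i + W_H'$ meets $\{jh : j \in \Z\}$ (by density), and then using a pigeonhole/uniform-recurrence argument on the orbit $(jh)_{j \in \Z}$ in the compact group $H$ — the return times to a fixed open set are syndetic. Once that lemma is in hand, the verification above goes through, and together with the reductions in Subsection \ref{ss:reduct} (via (DU), (PL), and Lemma \ref{l:lie-case}) this completes the proof of Theorem \ref{th:sufficient}, hence of the sufficiency direction of Main Theorem A. \qed
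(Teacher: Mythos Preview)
Your proof is correct and takes a genuinely different route from the paper's. The paper first reduces to the single universal case $H=\bZ$ (the Bohr compactification of $\Z$) via Lemma~\ref{l:reduction} and closure property (QG), and then proves the First Key Lemma~\ref{l:R-plus-b} by passing to the Pontryagin dual: with $Z_n=\Z\.(\frac1n,\id_\T)$ one computes $Z_n^\perp$ explicitly in $\hat G=\R\times\T_d$ and checks $Z_n^\perp\to\{0\}$, whence $Z_n\to G$ by Pontryagin--Chabauty duality (Proposition~\ref{Pon-Cha-duality}). Your argument instead stays in $G$ for an arbitrary compact monothetic $H$ and uses the elementary dynamical fact that the hitting times of a minimal group rotation to a fixed nonempty open set are syndetic with a gap bound uniform over all translates; this is exactly what is needed to place some $j\.(\frac1m,h)$ in each $W$-window around each point of $K$ once $m$ is large. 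Your route avoids both the reduction to $\bZ$ and the duality machinery, trading the clean algebraic computation in the dual for a compactness/recurrence lemma. One remark: your hesitation about metrizability is unnecessary, since your argument actually shows that for every pair $(K,W)$ there is an $m_0$ with $K\subseteq W+S_m$ for all $m\ge m_0$; hence the \emph{sequence} $(S_m)_{m\in\N}$ already converges to $G$ in $\cg(G)$, irrespective of whether $H$ is second countable.
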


We shall use the Bohr compactification
of the group $\Z$ of integers. This group is also called the {\it
universal monothetic group}.

Here we shall identify  $\hat{\bZ}$ with $\T_d$ (for $\T=\R/\Z$)
and consider the elements $\chi$ of $\bZ$ as characters of $\T_d$.
There is one distinguished character, namely the identity morphism
$\id_\T\colon\T_d\to\T$, and we map $\Z$ naturally and bijectively
onto a dense subgroup of
$\bZ$ via the map
\begin{equation}\label{defn:rho}
\rho\colon\Z\to\bZ,\quad
m\mapsto m\.\id.
\end{equation}

The following Lemma shows that for a proof of Lemma \ref{l:R-plus-mon}
it suffices to prove it for $H=\bZ$.

\begin{lem} \label{l:reduction}  The group $G=\R\times H$ is integrally approximable for any
 monothetic compact subgroup $H$ if and only if it is so for $H=\bZ$.
\end{lem}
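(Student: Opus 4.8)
The claim has two directions, but only one of them has any content. Since $\bZ$ is itself a monothetic compact group, the "only if" direction is a pure tautology: if $\R\times H$ is integrally approximable for \emph{every} monothetic compact $H$, then in particular it is so for $H=\bZ$. So the entire substance of the lemma is the "if" direction: assuming $\R\times\bZ$ is integrally approximable, deduce that $\R\times H$ is integrally approximable for every monothetic compact group $H$. The strategy is to exhibit $H$ as a continuous quotient of $\bZ$ by a compact (hence normal) subgroup, and then invoke the quotient-stability property (QG) of Proposition \ref{p:class(appro-integral)}.

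The key structural fact is that $\bZ$ is the \emph{universal} monothetic compact group: since $H$ is compact monothetic, there is a character $\Z\to H$ with dense image, equivalently a continuous homomorphism from $\bZ$ onto a dense subgroup of $H$; because $\bZ$ is compact and $H$ is Hausdorff, the image is closed and hence all of $H$. Thus we get a surjective continuous morphism $q\colon\bZ\to H$, and $N\defi\ker q$ is a closed, hence compact, subgroup of $\bZ$ with $\bZ/N\cong H$. Now form the product morphism $\id_\R\times q\colon\R\times\bZ\to\R\times H$; it is a surjective continuous morphism whose kernel is $\{0\}\times N$, a compact subgroup of $\R\times\bZ$. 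By hypothesis $\R\times\bZ$ is integrally approximable, so by Proposition \ref{p:class(appro-integral)} (QG) the quotient $(\R\times\bZ)/(\{0\}\times N)\cong\R\times H$ is integrally approximable as well. This completes the "if" direction and the proof.

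There is essentially no obstacle here — the lemma is a bookkeeping device that isolates the genuinely hard case $H=\bZ$ (to be treated in the sequel) from the formal reductions. The only point that deserves a word of care is the existence and surjectivity of the morphism $\bZ\to H$: one should either cite the universal property of the Bohr compactification of $\Z$ directly, or spell out the character-theoretic argument (a dense morphism $\Z\to H$ extends uniquely along $\rho\colon\Z\to\bZ$ by the universal property, and compactness of $\bZ$ forces the extension to be onto). Everything else is an immediate application of results already in hand.
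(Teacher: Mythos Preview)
Your proposal is correct and follows essentially the same route as the paper's proof: both observe that the ``only if'' direction is trivial, then for the ``if'' direction use the universal property of the Bohr compactification $\bZ$ to extend a dense morphism $\Z\to H$ to a surjective morphism $\bZ\to H$ with compact kernel, and finally apply Proposition~\ref{p:class(appro-integral)}~(QG) to the quotient $\R\times\bZ\to\R\times H$. One small terminological quibble: you write ``character $\Z\to H$'' where you mean simply a morphism (or homomorphism) with dense image; the word ``character'' is usually reserved for morphisms into $\T$.
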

\begin{proof} Obviously the latter condition is a necessary one for the
former, so we have to show that it is sufficient. Let $H$ be a
monothetic group. Then there is a morphism
$f\colon \Z\to H$ with dense image.
We have the  canonical dense morphism
$\rho\colon \Z\to\bZ$. By
the universal property of the Bohr compactification there is a unique
morphism $\pi\colon \bZ\to H$ such that $f=\pi\circ\rho$.
Since $f$ has a dense image, this holds for $\pi$, whence by the compactness
of $\bZ$, the morphism is a surjective morphism between compact groups and therefore
is a quotient morphism with a compact kernel.
Therefore there is a quotient morphism with compact
kernel
$\R\times \bZ\to \R\times H$. Hence
by Proposition \ref{p:class(appro-integral)} (QG), $\R\times H$ is
integrally approximable if $\R\times\bZ$ is integrally
approximable.
\end{proof}

Now the following lemma will complete the proof of
Theorem \ref{th:sufficient} and thereby conclude the section with
a proof of the main result of the article.

\begin{lem}[First Key Lemma] \label{l:R-plus-b}
The group $\R\times \bZ$
is approximable by a sequence of integral subgroups.
\end{lem}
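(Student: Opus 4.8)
The plan is to construct an explicit sequence of cyclic subgroups $S_k = \langle (r_k, \chi_k)\rangle \subseteq \R\times\bZ$ and verify that $S_k \to \R\times\bZ$ in the Chabauty topology, using the neighborhood base of $\R\times\bZ$ given by Equation~(\ref{eq:G_base}): I must show that for every compact $K\subseteq \R\times\bZ$ and every identity neighborhood $W$, eventually $K\subseteq W + S_k$. Since $\bZ$ is compact, it is harmless to take $K$ of the form $[-M,M]\times \bZ$ and $W$ of the form $(-\varepsilon,\varepsilon)\times V$ for a basic identity neighborhood $V$ in $\bZ$; such $V$ is determined by a finite set of characters, i.e.\ by a finite set $\{t_1,\dots,t_N\}\subseteq\T_d$ and a tolerance $\delta>0$, with $V = \{\chi\in\bZ : |\chi(t_i)| < \delta \text{ for } i=1,\dots,N\}$ (reading $|\cdot|$ as distance to $0$ in $\T$). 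So the real task is: given $M,\varepsilon,\delta$ and $t_1,\dots,t_N$, find $(r,\chi)$ with $\langle(r,\chi)\rangle$ discrete (hence $\cong\Z$) such that every point of $[-M,M]\times\bZ$ lies within $(-\varepsilon,\varepsilon)\times V$ of some integer multiple $n\.(r,\chi)$.

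First I would handle the $\bZ$-coordinate. The key point is that $\bZ$ is the Bohr compactification of $\Z$, so the image $\rho(\Z)$ is dense; moreover, and more usefully, for any finite set of characters $t_1,\dots,t_N$ and any $\delta$, the set of integers $m$ with $\rho(m)\in V$ — equivalently $|m t_i| < \delta$ in $\T$ for all $i$ — is \emph{syndetic} in $\Z$ (a standard simultaneous-Weyl / Bohr-set fact: finite intersections of Bohr-$0$ sets in $\Z$ are syndetic). Call its bounded gap $L$. This means: from \emph{any} starting integer we can reach, within $L$ steps, an integer whose $\rho$-image is $\delta$-close to $0$ in the relevant coordinates. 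Now take $r = \varepsilon' := \varepsilon/(L+1)$ (say), and set $\chi = \rho(1) = \id_\T$, so $S = \langle(\varepsilon',\id)\rangle = \{(n\varepsilon', \rho(n)) : n\in\Z\}$. This $S$ is discrete in $\R\times\bZ$ because the $\R$-coordinate is discrete, hence $S\cong\Z$. Given a target $(x,\eta)\in[-M,M]\times\bZ$: choose $n_0$ with $n_0\varepsilon'$ within $\varepsilon'$ of $x$; then by syndeticity pick $n$ with $|n-n_0|\le L$ and $\rho(n-n_0)$ already $\delta$-close to $0$ on $t_1,\dots,t_N$ — wait, that only fixes the drift in $\rho$, not the value $\eta$ itself. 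So the argument needs one more ingredient.

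The missing ingredient, and the step I expect to be the main obstacle, is matching the \emph{target group element} $\eta\in\bZ$ rather than just staying near $0$: I need the multiples $\rho(n)$ to come $V$-close to \emph{every} $\eta$, not merely to be equidistributed near the identity. This is exactly the density of $\rho(\Z)$ in $\bZ$, but I need it \emph{quantitatively and simultaneously with the $\R$-coordinate constraint}. The clean way is: density of $\rho(\Z)$ in the compact group $\bZ$ says the set of $n$ with $\rho(n)\in \eta + V$ is nonempty; combined with the syndeticity of the Bohr set $\{m : \rho(m)\in V\}$ (which is a subgroup-like "approximate period" set), one gets that $\{n : \rho(n)\in \eta+V\}$ is also syndetic with the \emph{same} gap bound $L$ (it is a translate of the Bohr set by any single solution). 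So: first find some $n_1$ with $\rho(n_1)\in\eta+V$; the full solution set is $n_1 + \{m:\rho(m)\in V\}$, which is $L$-syndetic; intersect this with the set of $n$ for which $n\varepsilon'$ is within $\varepsilon$ of $x$ — that latter set is an interval of integers of length about $2\varepsilon/\varepsilon' = 2(L+1) > L$, so the two sets meet. Picking $n$ in the intersection gives $n\.(\varepsilon',\id) = (n\varepsilon', \rho(n)) \in (x-\varepsilon,x+\varepsilon)\times(\eta+V) \subseteq (x,\eta) + W$, as required. Finally I would remark that the construction depends only on $(M,\varepsilon,\delta,\{t_i\})$, so running it over a countable cofinal family of $(K,W)$ — available since $\R\times\bZ$ is $\sigma$-compact with a countable base at the identity in each basic-neighborhood parameter, cf.\ Remark~\ref{countability} — produces the desired \emph{sequence} $S_k\to\R\times\bZ$, completing the proof.
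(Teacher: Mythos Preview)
Your approach is sound and genuinely different from the paper's, but two steps need repair.

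The paper argues via duality: it sets $Z_n=\Z\cdot(\tfrac1n,\id_\T)$, computes the annihilator $Z_n^\perp\subseteq\hat G=\R\times\T_d$ explicitly, shows by an elementary net argument (Lemma~\ref{convergence-trivial-subgp}) that $Z_n^\perp\to\{0\}$, and then invokes Pontryagin--Chabauty duality (Proposition~\ref{Pon-Cha-duality}) to conclude $Z_n\to G$. You instead work directly in $G$ via the syndeticity of Bohr sets in $\Z$; this avoids the duality machinery at the price of some combinatorics. Note that your subgroups $\Z\cdot(\varepsilon',\id)$ are, up to the choice of $\varepsilon'$, exactly the paper's $Z_n$.

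First repair: the assertion that $\{n:\rho(n)\in\eta+V\}$ is ``a translate of the Bohr set by any single solution'' is false as written. If $\rho(n_1)\in\eta+V$ and $\rho(m)\in V$, then $\rho(n_1+m)\in\eta+V+V$, not $\eta+V$ in general. The fix is routine: pass to a symmetric $V'$ with $V'+V'\subseteq V$, let $L$ be the gap bound of $\{m:\rho(m)\in V'\}$, and choose $n_1$ with $\rho(n_1)\in\eta+V'$; then $n_1+\{m:\rho(m)\in V'\}\subseteq\{n:\rho(n)\in\eta+V\}$ is $L$-syndetic and your interval argument goes through unchanged with $\varepsilon'=\varepsilon/(L+1)$.

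Second repair: your final sentence invokes Remark~\ref{countability} to obtain a countable cofinal family of neighborhoods $(K,W)$. But $\bZ$ has weight $2^{\aleph_0}$ (its discrete dual $\T_d$ has continuum many elements), so $\R\times\bZ$ is \emph{not} first countable and Remark~\ref{countability} does not apply; as it stands you have only produced a net, not a sequence. Fortunately your own argument already contains the cure: for fixed $(K,W)$ with associated gap bound $L$, \emph{every} $\varepsilon''\le\varepsilon/(L+1)$ works, since shrinking $\varepsilon''$ only lengthens the interval $\{n:|n\varepsilon''-x|<\varepsilon\}$. Hence the single sequence $Z_k=\Z\cdot(\tfrac1k,\id)$ eventually enters each $\mathcal U(G;K,W)$, giving the desired sequence without any countability hypothesis on $G$.
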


\msk

\subsection{Proving the First Key Lemma}
\label{ss:keylem}

The proof of the First Key Lemma requires some technical preparations
in which we use the duality of locally compact abelian groups.
In the process we need to consider the charachter group of
$\R\times\mathrm \bZ$. Here is a reminder of the determination of
the character group of a product:

\begin{lem}\label{dual-group-cartesian-product}
Let $A$ and $B$ be locally compact abelian groups.
Then there is an isomorphism
$\phi\colon\hat A\times\hat B\to(A\times B)\hat{\phantom w}$
such that

\centerline{$\phi(\chi_A,\chi_B)(a,b)=\chi_A(a)-\chi_B(b).$}
\end{lem}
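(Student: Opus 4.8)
The plan is to write down the obvious map and verify it is an isomorphism of topological groups; the statement is just the familiar fact that the dual of a finite product is the product of the duals, with a cosmetic minus sign on the second coordinate, so no deep input is needed. First I would define $\phi(\chi_A,\chi_B)$ to be the function $(a,b)\mapsto\chi_A(a)-\chi_B(b)$: this is continuous and additive in $(a,b)$, hence a genuine element of $(A\times B)\widehat{\phantom w}$, and it is visibly additive in $(\chi_A,\chi_B)$, so $\phi$ is a group homomorphism. For injectivity, if $\chi_A(a)-\chi_B(b)=0$ for all $(a,b)$, then specializing $b=0$ forces $\chi_A=0$ and specializing $a=0$ forces $\chi_B=0$. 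For surjectivity, given $\psi\in(A\times B)\widehat{\phantom w}$ I would set $\chi_A\defi\psi\circ\iota_A$ and $\chi_B\defi-\psi\circ\iota_B$, where $\iota_A\colon a\mapsto(a,0)$ and $\iota_B\colon b\mapsto(0,b)$ are the coordinate inclusions; since $(a,b)=\iota_A(a)+\iota_B(b)$ one computes directly that $\psi=\phi(\chi_A,\chi_B)$.

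The slightly more substantive point is that $\phi$ is a homeomorphism for the compact--open topologies. The key observation I would use is that every compact subset of $A\times B$ is contained in a product $K_A\times K_B$ of compact sets (take the images under the two coordinate projections), and conversely such products are compact. From this, uniform convergence of a net $\chi_A^{(j)}\to\chi_A$ on compacta of $A$ together with $\chi_B^{(j)}\to\chi_B$ on compacta of $B$ yields uniform convergence of $\phi(\chi_A^{(j)},\chi_B^{(j)})$ on compacta of $A\times B$, giving continuity of $\phi$; and the formula $\phi^{-1}(\psi)=(\psi\circ\iota_A,-\psi\circ\iota_B)$, together with the fact that $\iota_A$ and $\iota_B$ send compact sets to compact sets, shows $\phi^{-1}$ is continuous as well.

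I do not anticipate any real obstacle here. The only things to watch are the sign convention --- which is exactly why the displayed formula features $\chi_B$ rather than $-\chi_B$, and why the inverse carries a minus sign in its second coordinate --- and the interplay between uniform convergence and compacta of a product, which is routine. As an alternative to the hands-on argument one could invoke the standard duality for direct products of locally compact abelian groups (e.g.\ \cite{hofmorr}) and then observe that the isomorphism of the statement is that standard one post-composed with inversion on the $\widehat B$-factor.
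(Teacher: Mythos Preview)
Your argument is correct: the map is well defined, additive, bijective, and bicontinuous for the compact--open topologies, and your handling of the sign is accurate. There is nothing to fix.

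As for comparison with the paper: there is nothing to compare. The paper does not prove this lemma at all; it introduces it with the phrase ``Here is a reminder of the determination of the character group of a product'' and leaves it without proof, treating it as standard background (and indeed your final sentence, pointing to the general duality theorem in \cite{hofmorr} post-composed with inversion on the second factor, is exactly the spirit in which the paper invokes it). So your write-up supplies details that the authors deliberately omitted.
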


We apply this with $A=\R$ and $B=\bZ$.
For the simplicity of notation we shall denote
the coset $r+\Z\in\T$ of $r\in\R$  by $\overline r$.
We consider $\R$ also as the character group of $\R$ by
letting $r(s)=\overline{rs}\in\T$.
We also identify $\hat\T$ with $\Z$ by considering
$k\in \Z$ as the character defined by $k(\overline r)=\overline{kr}$.
Recall that we consider $\bZ$ as the character group of $\T_d$.
 In the spirit of Lemma \ref{dual-group-cartesian-product}, we
identify $\R\times \T_d$ with the character group $\hat G$ of
$G=\R\times \hat{\T_d}=\R\times\bZ$
by letting $(r,\overline s)$ denote the character of $G$ defined by
 $(r,\overline s)(x,\chi)=\overline{rx}-\chi(\overline s)\in\R/\Z$.
We recall that the identity function $\id_\T\colon\T_d\to\T$ is
a particular character of $\T_d$ and thus is an element of $\bZ$;
indeed  $\id_\T$ is the distinguished generator of $\bZ$.
Now we define
\begin{equation}
\label{eq:Zn}
Z_n=\Z\.(\frac 1 n,\id_\T)\in \cf(G), \quad G=\R\times\bZ.
\end{equation}
Accordingly, $(r,\overline s)\in \R\times\T_d$ belongs to the annihilator
$Z_n^\perp=(\frac 1 n,\id_\T)^\perp$
iff $\overline{\frac r n}-\overline s=0$, that is, iff
$\overline{\frac r n}=\overline s$. This means $\frac r n +\Z=s+\Z$,
and so

\begin{equation}
\label{eq:annih}
(r,\overline s)\in\R\times \T_d\mbox{\quad  is in $Z_n^\perp$ iff\quad }
\frac r n-s\in \Z.
\end{equation}

In an effort to show that $\lim_n Z_n^\perp=\{0\}$ in $\cg(\hat G)$
we consider the following

\begin{lem}[Convergence to the trivial subgroup]\label{convergence-trivial-subgp}
 Let $\Gamma$ be a
 locally compact group with identity $e$, and let $(H_n)_{n\in \N}$ be
a sequence of closed subgroups.  Then  the  following  statements are  equivalent:
\begin{enumerate}
  \item[$(a)$] For each  subnet $(H_{n_j})_{j\in J}$ of $(H_n)_{n\in \N}$
and each convergent
net $(h_{n_j})_{j\in J}$ with $h_{n_j}\in H_{n_j}$ and limit $h$ we have $h=e$.
  \item[$(b)$] $\lim_{n\in \N} H_n=\{e\}$ in $\cg(G)$.
\end{enumerate}
\end{lem}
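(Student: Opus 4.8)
The plan is to argue directly from the neighbourhood base of the trivial subgroup recorded in Equation~(\ref{eq:E_base}): in $\cg(\Gamma)$ the singleton $\{e\}$ has the basic neighbourhoods $\mathcal{U}(\{e\};K,W)=\{L\in\cg(\Gamma)\mid L\cap K\subseteq W\}$, with $K$ compact and $W$ an identity neighbourhood, and we may restrict to \emph{open} $W$ (replacing $W$ by $\interior W$ only shrinks the neighbourhood). Thus the statement $\lim_{n}H_n=\{e\}$ means precisely: for every compact $K$ and every open identity neighbourhood $W$ there is an $N$ with $H_n\cap K\subseteq W$ for all $n\ge N$. The whole proof is an unwinding of this, together with the compactness of $K$.

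\textbf{$(b)\Rightarrow(a)$.} Given a subnet $(H_{n_j})_{j\in J}$ of $(H_n)_{n\in\N}$ and a net $(h_{n_j})_{j\in J}$ with $h_{n_j}\in H_{n_j}$ and $h_{n_j}\to h$, I would suppose $h\ne e$ and derive a contradiction. Since $\Gamma$ is locally compact Hausdorff, pick disjoint open sets $U\ni h$ and $V\ni e$ and a compact neighbourhood $K_0$ of $h$, and set $K=K_0\cap\overline U$; then $K$ is a compact neighbourhood of $h$ with $e\notin K$ (because $\overline U\cap V=\emptyset$). Put $W=\Gamma\setminus K$, an open identity neighbourhood. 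By $(b)$ there is $N$ with $H_n\cap K\subseteq W$, hence $H_n\cap K=\emptyset$, for all $n\ge N$. But $h\in\interior K$ forces $h_{n_j}\in K$ for $j$ large, while the subnet property forces $n_j\ge N$ for $j$ large; for such $j$ we would get $h_{n_j}\in H_{n_j}\cap K=\emptyset$, a contradiction. Hence $h=e$.

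\textbf{$(a)\Rightarrow(b)$.} I would prove the contrapositive. If $H_n$ does not converge to $\{e\}$ in $\cg(\Gamma)$, then by the description of the neighbourhood base there are a compact $K$, an open identity neighbourhood $W$, and a cofinal set $J\subseteq\N$ with $H_n\cap K\not\subseteq W$ for every $n\in J$; choose $h_n\in(H_n\cap K)\setminus W$. As $K$ is compact, the net $(h_n)_{n\in J}$ has a convergent subnet $(h_{n_j})$ with limit $h\in K$; then $(H_{n_j})$ is a subnet of $(H_n)_{n\in\N}$, $h_{n_j}\in H_{n_j}$, and $h_{n_j}\to h$. Since $\Gamma\setminus W$ is closed and contains every $h_{n_j}$, we get $h\notin W$, so $h\ne e$ (as $e\in W$), contradicting $(a)$.

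There is no deep obstacle here; the argument is a direct manipulation of the Chabauty base for $\{e\}$ plus compactness of $K$. The only points that demand care are the reduction to open $W$ in the basic neighbourhoods and the subnet bookkeeping in $(a)\Rightarrow(b)$: one must check that the composite of ``$J$ cofinal in $\N$'' with ``convergent subnet of $(h_n)_{n\in J}$'' produces a genuine subnet of the original sequence $(H_n)_{n\in\N}$ along which the selected elements $h_{n_j}\in H_{n_j}$ converge, which is exactly the configuration that hypothesis $(a)$ addresses.
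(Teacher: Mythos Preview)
Your proof is correct and follows essentially the same route as the paper's: both directions are argued by contradiction via the basic neighbourhoods $\mathcal{U}(\{e\};K,W)$ of Equation~(\ref{eq:E_base}), using in $(b)\Rightarrow(a)$ a compact neighbourhood $K$ of $h$ with $e\notin K$ and $W=\Gamma\setminus K$, and in $(a)\Rightarrow(b)$ a compactness argument on $K\setminus W$ to extract a subnet converging to some $h\ne e$. The only cosmetic difference is that the paper passes the subnet directly inside the compact set $K\setminus U$, whereas you pass it inside $K$ and then invoke closedness of $\Gamma\setminus W$; both are fine.
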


\begin{proof}
$(a)\Rightarrow (b)$:\quad  We argue by contradiction.
Suppose that there is   a compact subset $K$ of $G$ and
 an open  neighborhood $U$ of  the  identity such that
for each $\alpha\in \N$ there
is an $n_\alpha\in \N$ with $\alpha \leq n_\alpha$ such that
$H_{n_\alpha}\not\in \mathcal{U}(\{e\}; K, U)$. That is,
$H_{n_\alpha}\cap  K \not\subseteq U$ by equation (\ref{eq:E_base}),  and  so
there is an $h_{n_\alpha}\in H_{n_\alpha}$ such that
$h_{n_\alpha}\in K\setminus U$. As  $K\setminus U$
is a compact subset of $G$,  the  net $(h_{n_\alpha})$
admits a subnet  converging to a point $a$ of  $K\setminus U$. Since
$e\not\in K\setminus U$, $a\not= e$, which is a contradiction.

$(b)\Rightarrow (a)$:   Let $(h_{n_j})_{j\in J}$  be  a net
converging to  $h$  and assume  $h_{n_j}\in H_{n_j}$ for  every $j\in J$.
Suppose    $h\ne e$. Now let $K$ be a compact neighborhood of $h$ not containing $e$.
We may assume that $h_{n_j}\in K$ for all $j\in J$.
As $\lim_{n\in \N} H_n=\{e\}$,  there  exists $N\in \N$  such  that  for  each
$n\ge N$ we have $H_n\in\mathcal U(\{e\};K,\Gamma\setminus K)$, that is,
  $H_n\cap K\subseteq\Gamma\setminus K$ by equation (\ref{eq:E_base}),  and this is a contradiction.
\end{proof}

In order to appreciate this lemma, consider the condition

\qquad $(a')$  {\em For each convergent
net $(h_{i})_{i\in I}$ with $h_{i}\in H_{i}$ and limit $h$ we have
$h=e$.}

The  following example will show   that  the  implication
$(a')\Rightarrow (b)$ fails.

\begin{example}
In $\cg(\R)$, let
$$H_n=\left\{
    \begin{array}{ll}
      n\Z, & \hbox{if $n$ is  even;} \\
&\\
      \frac{1}{n}\Z, & \hbox{if $n$ is  odd.}
    \end{array}
  \right.$$
Let  $(h_n)$  be  a  sequence  in  $\R$  converging
to  $h$ and  such  that, for  each  $n$,\ $h_n\in H_n$.
For  any  $n\in \N$, there  is  $k_n\in \Z$  such  that
$h_{2n}= 2n k_n$. As  the  subsequence $(h_{2n})$  converges
to  $h$, $h=0$. So $(a')$ is satisfied.
However,
as the  subsequence $(H_{2n})$  converges  to $\{0\}$
and the  subsequence $(H_{2n+1})$  converges  to $\R$, the
sequence $(H_n)$ is divergent and so $(b)$ fails.
\end{example}

\begin{lem} \label{l:null-convergence}
$\lim_{n\in\N} Z_n^\perp =\{0\}$.
\end{lem}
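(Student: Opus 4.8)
The plan is to verify condition $(a)$ of Lemma \ref{convergence-trivial-subgp} for the sequence $(Z_n^\perp)_{n\in\N}$ in $\cg(\hat G)$, where $\hat G = \R\times\T_d$ and $Z_n^\perp$ is described explicitly by Equation (\ref{eq:annih}). So I would take an arbitrary subnet, say indexed by $j\in J$ with underlying indices $n_j\to\infty$, pick elements $(r_j,\overline{s_j})\in Z_{n_j}^\perp$, assume they converge in $\R\times\T_d$ to some limit $(r,\overline s)$, and show $(r,\overline s) = (0,\overline 0)$. Since convergence in the product $\R\times\T_d$ means $r_j\to r$ in $\R$ and $\overline{s_j}\to\overline s$ in the \emph{discrete} group $\T_d$, the second coordinate is eventually constant: there is $j_0$ with $\overline{s_j}=\overline s$ for all $j\ge j_0$.

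The membership condition (\ref{eq:annih}) says $\frac{r_j}{n_j} - s_j\in\Z$, equivalently $\overline{s_j} = \overline{r_j/n_j}$ in $\T$. For $j\ge j_0$ this gives $\overline{r_j/n_j}=\overline s$, a \emph{fixed} element of $\T$. Now $r_j\to r$ is a bounded net (convergent nets in $\R$ are eventually bounded, say $|r_j|\le M$ for $j$ large), and $n_j\to\infty$, so $r_j/n_j\to 0$ in $\R$; hence $\overline{r_j/n_j}\to\overline 0$ in $\T$. But $\overline{r_j/n_j}=\overline s$ is constant for $j\ge j_0$, forcing $\overline s=\overline 0$. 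Then for $j\ge j_0$ we have $\frac{r_j}{n_j}\in\Z$ while $|r_j/n_j|\to 0$, so $r_j/n_j = 0$ eventually, i.e. $r_j=0$ eventually; passing to the limit gives $r=0$. Thus $(r,\overline s)=(0,\overline 0)$, which is condition $(a)$, and Lemma \ref{convergence-trivial-subgp} yields $\lim_{n\in\N} Z_n^\perp = \{0\}$.

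The only point requiring any care — and the reason the weaker condition $(a')$ would not suffice, as the Example preceding the lemma illustrates — is that one must handle arbitrary \emph{subnets}, not just the full sequence; but the argument above never used anything about the index set beyond $n_j\to\infty$, which holds for every subnet of $(n)_{n\in\N}$, so this causes no difficulty. I would also note that the discreteness of $\T_d$ in the second coordinate is what makes the "eventually constant" step legitimate; working instead in $\cg(\R\times\T)$ the limit computation would still go through via $r_j/n_j\to 0$, but the clean eventual-equality shortcut is a convenience of the topology on $\hat G$ we have set up. No serious obstacle is expected here; this lemma is the elementary payoff of the explicit annihilator formula (\ref{eq:annih}).
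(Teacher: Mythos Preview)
Your proposal is correct and follows essentially the same approach as the paper's proof: both verify condition~$(a)$ of Lemma~\ref{convergence-trivial-subgp} by taking a subnet with $n_j\to\infty$, using the annihilator description~(\ref{eq:annih}) together with $r_j/n_j\to 0$ to force $\overline s=0$, and then concluding that $r_j/n_j$ is an eventually vanishing integer so that $r=0$. Your presentation invokes the discreteness of $\T_d$ explicitly up front to get $\overline{s_j}$ eventually constant, whereas the paper phrases the same step via representatives, but this is only a cosmetic difference.
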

\begin{proof}
We shall apply Lemma \ref{convergence-trivial-subgp} and assume
that we have a  net $(n_j)_{j\in J}$ cofinal in $\N$ such that
$(r_j,\overline s_j)_{j\in J}$ is a convergent net with

\centerline{$(r_j,\overline{s_j})\in Z_{n_j}^\perp$ for all $j\in J$
and with $(r,\overline s)=\lim_{j\in J} (r_j,\overline{s_j})$.}

From Equation (\ref{eq:annih}) we know that
$\frac{r_j}{n_j}-s_j\in \Z$. Since $r_j\to r$,  $s_j\to s$,
and $n_j\to\infty$, we conclude  $s\in \Z$ and thus $\overline s=0$.
Further $s\in \Z$ and $r_j/n_j\to 0$ imply the existence
of a $j_0$ such that
$j_0\le j$ implies $s_j=s$. Then $r_j/n_j$ is an integer,
and so for large enough $j$
we have $r_j=0$ which implies $r=0$.
Thus $\lim_{j\in J}(r_j,\overline{s_j})=0$
and by Lemma \ref{convergence-trivial-subgp} this shows that
$\lim_{n\in\N}Z_n^\perp =\{0\}$ which we had to show.
\end{proof}

Now we are ready for  proof of the First Key Lemma:
There is a sequence $(Z_n)_{n\in \N}$ in $\cf(G)$,
$Z_n=\Z\.(\frac 1 n,\id_\T)$
for which $(Z_n^\perp)_{n\in \N}$ converges  to $\{0\}$
in $\cg(\hat G)$ by Lemma \ref{l:null-convergence}.
Now we apply Pontryagin-Chabauty Duality in the form
of Proposition \ref{Pon-Cha-duality} and conclude
\begin{equation}
\label{eq:sequential-appro}
\lim_{n\in\N}Z_n =G\mbox{ in }\cg(G)
\mbox{ for }G=\R\times\bZ.
\end{equation}
This concludes the proof of the First Key Lemma \ref{l:R-plus-b}
and thus also finishes the proof of Theorem \ref{th:sufficient}.

We can summarize the main result as follows:

\begin{thmA} A locally compact group $G$ is integrally
approximable if and only if it is either discrete, in which case it is isomorphic
to a nonsingleton subgroup of $\Q$, or else 
it is abelian and of the form $G\cong \R\times \comp(G)$
where $G/G_0\cong \comp(G)/\comp(G)_0$ is periodic
 inductively monothetic.
 \end{thmA}

We recall that the groups $\comp(G)_0$ range through all compact connected abelian groups
and that the periodic inductively monothetic groups were classified in
Proposition \ref{p:ind-mon} (3).

We mention in passing that Theorem A yields a characterisation of the the group
$\R$ in the class of locally compact groups. For this purpose let us call
a topological group {\it compact-free} if it does not contain a nonsingleton
compact subgroup.

\begin{cor} \label{c:compactfree} For a locally compact group $G$ the
following conditions are equivalent:
\begin{enumerate}
\item $G\cong\R$ or else is isomorphic to a nonsingleton subgroup of the 
      discrete group $\Q$.
\item $G$ is compact-free and integrally approximable.
\end{enumerate} 
\end{cor}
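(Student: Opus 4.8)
The plan is to read this corollary off from Main Theorem A, the point being that \emph{compact-freeness} collapses the structure permitted there down to exactly the two stated alternatives.

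\textbf{Direction $(1)\Rightarrow(2)$.} I would handle the two cases separately. If $G\cong\R$, then $\R$ is integrally approximable by Example~\ref{e:R}, and since the only compact subgroup of $\R$ is $\{0\}$ (every closed subgroup of $\R$ is $\{0\}$, some $\alpha\Z$, or $\R$), the line is compact-free. If instead $G$ is isomorphic to a nonsingleton subgroup of the discrete group $\Q$, then $G$ is integrally approximable by (the proof of) Theorem~\ref{th:discrete-case}; moreover $G$ is discrete and torsion-free, so any compact subgroup of $G$ is finite and hence trivial, i.e.\ $G$ is compact-free.

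\textbf{Direction $(2)\Rightarrow(1)$.} Assume $G$ is compact-free and integrally approximable. By Main Theorem A there are precisely two possibilities. Either $G$ is discrete and isomorphic to a nonsingleton subgroup of $\Q$ --- which is alternative (1) --- or $G$ is abelian with $G\cong\R\times\comp(G)$. In the latter case, $\comp(G)$ is by definition the union of all compact subgroups of $G$; since $G$ is compact-free, every such subgroup is $\{0\}$, whence $\comp(G)=\{0\}$ and $G\cong\R$. In either case (1) holds.

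Once Main Theorem A is available there is essentially no obstacle here: the only step requiring a word of care is the verification, in the direction $(1)\Rightarrow(2)$, that both $\R$ and a nonsingleton discrete subgroup of $\Q$ are genuinely compact-free, so that the equivalence is not vacuous on one of its sides.
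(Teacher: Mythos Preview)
Your proof is correct and follows exactly the route the paper intends: the corollary is stated without proof because it is meant to be read off directly from Main Theorem A, and that is precisely what you do. The only additions you supply are the routine checks that $\R$ and nonsingleton discrete subgroups of $\Q$ are compact-free, which the paper leaves tacit.
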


A   similar characterization using the property of being
compact-free  was suggested by Chu in \cite{chu}.

\section{Complement: Groups approximable by real  subgroups}

We classified locally compact groups $G$ for which every neighborhood of
$G\in \cg(G)$ contains a subgroup $H$ of $G$ isomorphic to $\Z$. We called
such groups {\it integrally approximable}. Now we
shall do the same for groups $G$ with the same property except that
$\Z$ is replaced by $\R$. We need a name for these groups that are
approximated by subgroups of (real) {\it numbers}. For this
purpose let us denote by $\cR(G)$ the subspace of $\cg(G)$ containing all
 subgroups isomorphic to the  group $\R$ of all real numbers, the
{\it real vector group of dimension} $1$.

\begin{defn} \label{numeral-subgroups}
A locally compact group $G$ is said to be
\textit{numerally approximable}
if $G\in\overline{\cR(G)}$.
\end{defn}

Trivially, $\R$ is numerally approximable.
The theory and classification of numerally approximable groups is
in most ways simpler than that of integrally approximable groups.
The basic aspects are completely analogous to the former and
that allows us now to proceed more expeditiously.

\begin{example} \label{p:vector-groups-again}
For $n\in\N$ and $G=\R^n$, the following statements are equivalent:
\begin{itemize}
\item[(1)] $G$ is numerally approximable.
\item[(2)] $n=1$.
\end{itemize}
\end{example}
\begin{proof} Trivially, (2) implies (1). Conversely,
if (1) is satisfied, an inspection of the proof of (1)\implies(2)
for Example \ref{p:vector-groups} shows that it applies to
the next to the last sentence, where $\Z$ needs to be replaced
by $\R$ to applie literally.
\end{proof}

\begin{lem}\label{encadrement}
Let  $G$  be  a locally  compact group  and
$(A_i)_{i\in I}$, $(B_i)_{i\in I}$  two  nets
converging to  $A$  and  $B$  respectively.
If $A_i\subseteq B_i$ holds eventually, then  $A\subseteq B$.
\end{lem}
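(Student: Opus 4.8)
The plan is to argue by contradiction, using only the explicit neighbourhood base (\ref{eq:Chabauty_base}) of the Chabauty topology together with the local compactness of $G$. Suppose $A\not\subseteq B$ and fix $a\in A\setminus B$. The mechanism is the following: each approximant $A_i$ must contain a point $a_i$ close to $a$; by hypothesis $a_i$ lies in $B_i$ as well; since $B_i\to B$, the point $a_i$ must then be close to $B$; and letting the approximation become finer forces $a$ itself into $\overline B=B$, which is the desired contradiction.

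To make this precise I would first fix an arbitrary symmetric open identity neighbourhood $W=W^{-1}$ with compact closure --- available since $G$ is locally compact --- and set $K=\overline{Wa}$, a compact set containing the open neighbourhood $Wa$ of $a$, so that in particular $a\in A\cap K$ and $Wa\subseteq K$. Because $A_i\to A$, eventually $A_i\in\mathcal{U}(A;K,W)$, hence $A\cap K\subseteq WA_i$; since $a\in A\cap K$ this produces some $a_i\in A_i$ with $a\in Wa_i$, whence $a_i\in W^{-1}a=Wa\subseteq K$. Eventually also $A_i\subseteq B_i$, so $a_i\in B_i\cap K$; and because $B_i\to B$, eventually $B_i\in\mathcal{U}(B;K,W)$, giving $B_i\cap K\subseteq WB$ and therefore $a_i\in WB$. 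Fixing an index $i$ beyond which all three of these eventual conditions hold and combining them yields $a\in Wa_i\subseteq W\cdot WB=W^2B$.

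Finally, as $W$ ranged over all symmetric relatively compact identity neighbourhoods, and every identity neighbourhood contains some such $W^2$, this shows $a\in VB$ for every identity neighbourhood $V$. Since $B$ is a closed subgroup, $\bigcap_V VB=\overline B=B$ --- the elementary fact that $x\in VB$ for every $V$ forces $x\in\overline B$. Hence $a\in B$, contradicting the choice of $a$, so $A\subseteq B$.

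The one delicate point --- the nearest thing to a genuine obstacle --- is selecting a single compact test set $K$ that works in both convergences at once: it must contain $a$ so that $a\in A\cap K$ can be extracted from $A_i\to A$, and it must contain the approximants $a_i$ so that $a_i\in B_i\cap K$ can be fed into $B_i\to B$. The choice $K=\overline{Wa}$ with $\overline W$ compact achieves both simultaneously, and it is precisely local compactness that makes such a $W$, and hence such a $K$, available. Everything else is routine manipulation of the defining relations $H\cap K\subseteq WL$ and $L\cap K\subseteq WH$ in (\ref{eq:Chabauty_base}).
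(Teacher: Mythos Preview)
Your proof is correct and follows essentially the same contradiction strategy as the paper's: pick $a\in A\setminus B$, use $A_i\to A$ to produce approximants $a_i\in A_i\subseteq B_i$ near $a$, and use $B_i\to B$ to force $a$ close to $B$. The paper organizes this slightly more economically by fixing once and for all a relatively compact identity neighbourhood $U$ with $\overline{U}a\cap B=\emptyset$ (using the closedness of $B$ at the outset) and obtaining the contradiction in a single pass, whereas you let $W$ vary and invoke closedness only at the end via $\bigcap_V VB=B$; both arrangements are equally valid.
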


\begin{proof}  By way of contradiction, suppose  that $A$  is  not  a
subgroup  of  $B$.  Let  $x\in A\setminus  B$  and  $U$ a
relatively  compact  open  neighborhood of  $e$  such  that
$\overline U x\cap B =\emptyset$. As $B_i \to B$, there
exists  $i_0\in I$  such  that  for  any $i\ge  i_0$  we
have $\overline{U} x\cap
B_i=\emptyset$  and  so $Ux \cap  A_i =\emptyset$,
which is  a  contradiction because
the set if all closed subgroups meeting the open set $Ux$ is
an open neighborhood of $A$.
\end{proof}

In particular, this lemma implies that the limit of
a net of connected closed subgroups of a locally compact group $G$
is contained in the identity component $G_0$ of $G$, whence
a group $G$ which is approximable by real subgroups is necessarily
connected.  Using also Proposition \ref{space-abelian-closed},
we conclude:

\begin{lem}\label{l:realappro:connected}
Every  numerally approximable locally  compact  group
 is abelian and connected.
\end{lem}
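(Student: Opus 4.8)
The plan is to separate the two conclusions and to read each one off from exactly one of the two preliminary results just proved, Proposition \ref{space-abelian-closed} and Lemma \ref{encadrement}. For commutativity I would argue exactly as in Corollary \ref{focus-abelian}: since $\R$ is abelian we have $\cR(G)\subseteq\cab{G}$, and Proposition \ref{space-abelian-closed} says $\cab{G}$ is closed in $\cg(G)$, so $\overline{\cR(G)}\subseteq\cab{G}$. Numeral approximability means $G\in\overline{\cR(G)}$, hence $G\in\cab{G}$, i.e.\ $G$ is abelian.

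For connectedness I would fix a net $(S_j)_{j\in J}$ in $\cR(G)$ with $S_j\to G$ in $\cg(G)$, which exists by Definition \ref{numeral-subgroups}. Each $S_j$ is isomorphic to $\R$, hence is a connected subset of $G$ containing the identity, and therefore $S_j\subseteq G_0$. Now apply Lemma \ref{encadrement} with $A_j=S_j$ and with the constant net $B_j=G_0$, which trivially converges to $G_0$: since $A_j\subseteq B_j$ for all $j$ and $A_j\to G$, $B_j\to G_0$, the lemma yields $G\subseteq G_0$, whence $G=G_0$ and $G$ is connected. (This is the argument sketched in the sentence preceding the statement, made explicit.)

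There is essentially no obstacle here; the only points that deserve a word are the two elementary facts invoked — that a connected subgroup through $e$ lies in $G_0$, and that a constant net converges to its value in the Chabauty topology — both of which are immediate. The real content was already packaged into Proposition \ref{space-abelian-closed} and Lemma \ref{encadrement}, so the proof is just an assembly of these, and I would keep it to a few lines.
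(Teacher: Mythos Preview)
Your proof is correct and follows exactly the paper's own argument: abelianness via $\cR(G)\subseteq\cab{G}$ and Proposition \ref{space-abelian-closed}, connectedness via Lemma \ref{encadrement} applied to $S_j\subseteq G_0$. The paper compresses this into the sentence preceding the lemma, and you have simply unpacked it faithfully.
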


In view of Proposition \ref{p:vector-splitting} we may rephrase this
as follows:

\begin{thm} \label{th:necessary-numeral} Let $G$ be a numerally
approximable locally compact group. Then $\comp(G)$ is
compact and connected and
$$G\cong \R\times \comp(G).$$
\end{thm}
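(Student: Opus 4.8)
The plan is to feed the structural information already in hand into Proposition~\ref{p:vector-splitting} and then to pin down the euclidean dimension by a quotient argument, in complete parallel with Example~\ref{p:vector-groups-again}.

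By Lemma~\ref{l:realappro:connected} the group $G$ is abelian and connected, so Proposition~\ref{p:vector-splitting} applies: write $G = E \oplus H$ with $E \cong \R^n$ and $H$ as described there, and put $C = \comp(G)$. The subgroup $G_1 = G_0 + C$ is open by part~(d), and an open subgroup of a connected group is the whole group, so $G = G_1 \cong \R^n \times C$ topologically, again by~(d). A product $\R^n \times C$ is connected precisely when $C$ is; hence $C$ is connected, so $C = C_0 = \comp(G)_0 = \comp(G_0)$, which by part~(c) is compact and connected. Thus $\comp(G)$ is already known to be compact and connected, and only the equality $n = 1$ remains.

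For that I would pass to the quotient $G/C \cong \R^n$ modulo the compact subgroup $C$. The argument that proved property~(QG) of Proposition~\ref{p:class(appro-integral)} transfers verbatim to real subgroups: the map $A \mapsto \pi(A)\colon \cg(G) \to \cg(G/C)$ is continuous, and if $Z \cong \R$ then $Z \cap C = \{0\}$ because a compact subgroup of $\R$ is trivial, whence $\pi(Z) \cong \R$; so $\pi$ maps $\cR(G)$ into $\cR(G/C)$ and carries a net of real subgroups converging to $G$ to a net of real subgroups converging to $G/C$. Therefore $\R^n \cong G/C$ is numerally approximable, and Example~\ref{p:vector-groups-again} forces $n = 1$. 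Hence $G \cong \R \times \comp(G)$ with $\comp(G)$ compact and connected, which is the assertion.

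The proof is short because the substantive work was done in Lemma~\ref{l:realappro:connected}; the only point that deserves any care is checking that the continuity of $A \mapsto \pi(A)$ used in~(QG) is valid for quotients modulo compact (not necessarily open) subgroups, which is exactly the form in which it was used in the integral case. If one wishes to avoid invoking even that, the dimension count can be carried out directly inside $\R^n \times C$: a nontrivial one-parameter subgroup has the form $t \mapsto (tv,\gamma(t))$ with $v \neq 0$, because a real subgroup cannot lie in the compact group $\{0\} \times C$, and its projection to $\R^n$ is the line $\R v$; since two standard basis vectors of $\R^n$ cannot both lie within euclidean distance $\tfrac12$ of a single line, the contradiction of Example~\ref{p:vector-groups} reappears as soon as $n \geq 2$.
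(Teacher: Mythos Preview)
Your argument is correct and follows precisely the route the paper intends: Lemma~\ref{l:realappro:connected} gives connectedness and commutativity, Proposition~\ref{p:vector-splitting} then forces $G\cong\R^n\times C$ with $C$ compact connected, and the dimension $n=1$ comes from Example~\ref{p:vector-groups-again} after passing to the quotient $G/C$. The paper compresses all of this into the single phrase ``In view of Proposition~\ref{p:vector-splitting} we may rephrase this as follows,'' leaving both the identification $C=\comp(G)$ and the reduction to $n=1$ implicit; you have simply written out what the paper expects the reader to supply, including the check that property~(QG) of Proposition~\ref{p:class(appro-integral)} carries over to real subgroups (which it does, since $\R$ has no nontrivial compact subgroup and the image of a closed subgroup under a quotient by a compact normal subgroup is closed).
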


\vskip-20pt

\begin{gather}
\begin{aligned}
\xymatrix{
&G \ar@{-}[dr] \ar@{-}[dl]&&\\
\R\ar@{-}[dr]  && C\ar@{-}[dl] \\
&0
}\end{aligned}
\label{Fig3}
\end{gather}
 \begin{center}
 {$C=\comp(G),\quad G=\R\oplus C.$}
\end{center}

\bsk

The second part of the structure theorem for numerally approximable
groups, saying that every group $\R\times C$ with any compact connected abelian group $C$
is numerally approximable is proved in a reduction procedure similar to the one
we used for integrally approximable groups.

\begin{thm} \label{th:sufficient-numeral} Let $G=\R\times C$ for
a compact connected abelian group $C$. Then $G$ is numerally approximable.
\end{thm}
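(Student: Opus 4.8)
I would mirror the proof of the integral Theorem~\ref{th:sufficient}, replacing $\Z$ by $\R$ and passing to the dual side via Pontryagin--Chabauty duality. The first ingredient is the analogue of Proposition~\ref{p:class(appro-integral)}(PL): the class of numerally approximable locally compact groups is closed under the formation of strict projective limits $G=\lim_{N\in\mathcal N}G/N$ modulo compact normal subgroups $N$. Its proof is word for word the one given there, with $\R$ in place of $\Z$; the only adjustment is that, given a real subgroup $L_N$ of $G/N$, one must produce a real subgroup $L$ of $G$ with $\pi_N(L)=L_N$. Since $\pi_N^{-1}(L_N)$ is an extension of $\R$ by the compact group $N$ and every such extension splits, one takes for $L$ the image of a continuous splitting $\R\to\pi_N^{-1}(L_N)$. (Only (PL) is needed below; the remaining items of Proposition~\ref{p:class(appro-integral)} have equally verbatim analogues if wanted.)

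\textbf{Reduction to tori.} By the structure theory of compact groups (see e.g.\ \cite{hofmorr}), the compact connected abelian group $C$ is a strict projective limit $C=\lim_{N}C/N$ of its Lie quotients modulo compact normal subgroups $N$, and each $C/N$, being a connected compact abelian Lie group, is a torus $\T^{m(N)}$. Hence $G=\R\times C$ is the strict projective limit of the groups $G/(\{0\}\times N)\cong\R\times\T^{m(N)}$ modulo the compact subgroups $\{0\}\times N$, so by the preservation step it suffices to show that $\R\times\T^{m}$ is numerally approximable for every $m\in\N$ --- indeed, by a \emph{sequence} of real subgroups, the exact counterpart of the First Key Lemma~\ref{l:R-plus-b}.

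\textbf{The Key Lemma.} Fix $\omega_1,\dots,\omega_m\in\R$ linearly independent over $\Q$, and for $n\in\N$ set
\begin{equation*}
L_n=\{\,(x,\overline{n\omega_1x},\dots,\overline{n\omega_mx})\mid x\in\R\,\}\subseteq\R\times\T^m .
\end{equation*}
Being the graph of a continuous homomorphism $\R\to\T^m$, $L_n$ is a closed subgroup, and the first-coordinate projection restricts to a homeomorphism $L_n\to\R$; thus $L_n\in\cR(\R\times\T^m)$. Using Lemma~\ref{dual-group-cartesian-product} I identify $\widehat{\R\times\T^m}$ with $\R\times\Z^m$, a pair $(r,k)$, $k=(k_1,\dots,k_m)$, acting by $(r,k)(x,\overline{y_1},\dots,\overline{y_m})=\overline{rx-\sum_ik_iy_i}$. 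A direct computation then gives
\begin{equation*}
L_n^{\perp}=\{\,(n\textstyle\sum_ik_i\omega_i,\;k)\mid k\in\Z^m\,\}\cong\Z^m .
\end{equation*}
Now $\lim_{n}L_n^{\perp}=\{0\}$ in $\cg(\R\times\Z^m)$: by Lemma~\ref{convergence-trivial-subgp}, suppose $(n_j)_j$ is cofinal in $\N$ and $(\rho_j,k_j)\in L_{n_j}^{\perp}$ converges to $(\rho,k)$; since $\Z^m$ is discrete, $k_j=k$ eventually, so $\rho_j=n_j\sum_ik_i\omega_i$ eventually, and if $k\neq0$ then $\sum_ik_i\omega_i\neq0$ by the $\Q$-linear independence of the $\omega_i$, forcing $|\rho_j|\to\infty$, a contradiction; hence $k=0$ and then $\rho=0$. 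Finally Pontryagin--Chabauty duality (Proposition~\ref{Pon-Cha-duality}) converts $\lim_nL_n^{\perp}=\{0\}=(\R\times\T^m)^{\perp}$ into $\lim_nL_n=\R\times\T^m$ in $\cg(\R\times\T^m)$, which proves the Key Lemma and, together with the two reductions, the theorem.

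\textbf{Expected main obstacle.} The two reductions are routine transcriptions of the integral case. The real content is the Key Lemma, and its crucial point is the choice of the one-parameter subgroups $L_n$: the condition that $\omega_1,\dots,\omega_m$ be $\Q$-linearly independent (equivalently, $\sum_ik_i\omega_i\neq0$ for all $k\in\Z^m\setminus\{0\}$) is precisely what makes the annihilators escape to infinity and collapse to $\{0\}$. A secondary care point is the duality bookkeeping of Lemma~\ref{dual-group-cartesian-product} --- the identification $\widehat{\R\times\T^m}\cong\R\times\Z^m$ and the sign in the pairing --- though signs are irrelevant for the annihilator. One could instead route the reduction through the Bohr compactification $\bR$, exactly as the integral proof uses $\bZ$, replacing the tori by the single group $\R\times\bR$; handling $\R\times\T^m$ directly is shorter, however, since a general compact connected abelian group need not be a quotient of $\bR$.
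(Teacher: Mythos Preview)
Your proof is correct, but it takes a genuinely different route from the paper's. The paper reduces in two steps: first it uses (DU) and the fact that every compact connected abelian group is a directed union of compact connected \emph{monothetic} subgroups to assume $C$ monothetic; then it observes that such a $C$ is a quotient of the Bohr compactification $\bR$ (since $\hat C$ is torsion-free of rank $\le 2^{\aleph_0}$ and hence embeds in $\R_d$), and invokes (QG) to reduce to the single Second Key Lemma that $\R\times\bR$ is numerally approximable, proved by the same Pontryagin--Chabauty annihilator argument you use. You instead use only (PL), writing $C$ as a projective limit of tori and proving a family of key lemmas for $\R\times\T^m$; your observation that a general compact connected abelian $C$ need not be a quotient of $\bR$ is correct and explains why the paper needs the preliminary (DU) step. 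Your approach trades the universal object $\bR$ for the concrete tori $\T^m$, giving a more elementary and explicit computation (and your choice of $\Q$-independent frequencies $\omega_i$ is exactly the right device), at the cost of having to handle all $m$ rather than a single group; the paper's approach is more uniform but requires the extra structure theory about directed unions of monothetic subgroups. Your adaptation of (PL) via splitting the extension $0\to N\to\pi_N^{-1}(L_N)\to\R\to0$ is sound: by the vector-group splitting theorem (Proposition~\ref{p:vector-splitting}) one has $\pi_N^{-1}(L_N)\cong\R\times N$, providing the required real lift.
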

\begin{proof}
Every compact connected group is a directed union of  compact connected
monothetic groups (see e.g. \cite{hofmann-tran-amer}, Theorem I or \cite{hofmorr},
Theorem 9.36(ix), pp.~479f.). Thus $G$ is the directed union  of
subgroups $\R\times M$ where $M$ is compact connected monothetic.
The closure lemma Proposition \ref{p:class(appro-integral)} (DU)
is easily seen to apply to numerally approximable groups in place of
integrally approximable groups. Therefore
it is no loss of generality to assume that $C$ is compact connected
monothetic. Then it is a quotient of $\bR$, the Bohr compactification
of $\R$. (Indeed $\hat C$ is a discrete torsion free group of
rank $\le 2^{\aleph_0}$ and thus is a subgroup of $\R_d$ (the discrete
reals), and thus $C$ is a quotient of $\hat{\R_d}\cong \bR$.)
Since the closure lemma Proposition \ref{p:class(appro-integral)} (QG)
again applies to numerally approximable groups in place of integrally
approximable groups, the proof will be
complete if it is shown that $\R\times\bR$ is numerally approximable.
This will be done in the Second Key Lemma that follows.
\end{proof}

The proof of the Theorem is therefore reduced to showing that
one special group in numerally approximated:

\begin{lem}[Second Key Lemma] \label{l:key-lemma-2}
The group $G=\R\times\bR$ is numerally
approximable.
\end{lem}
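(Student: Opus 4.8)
The plan is to mimic the proof of the First Key Lemma (Lemma~\ref{l:R-plus-b}) exactly, working on the dual side via Pontryagin--Chabauty Duality (Proposition~\ref{Pon-Cha-duality}). First I would fix identifications of the character group of $G=\R\times\bR$. Recall $\bR=\widehat{\R_d}$ is the Bohr compactification of $\R$, so its character group is $\R_d$, the reals with the discrete topology, with the distinguished character being the identity inclusion $\iota\colon\R_d\to\R\subseteq\T$ (or rather its composite to $\T$). By Lemma~\ref{dual-group-cartesian-product} applied to $A=\R$ and $B=\bR$, the character group $\hat G$ is $\R\times\R_d$, where $(r,s)\in\R\times\R_d$ acts on $(x,\chi)\in\R\times\bR$ by $(r,s)(x,\chi)=\overline{rx}-\chi(s)\in\T$. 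In place of the integral subgroups $Z_n=\Z\.(\tfrac1n,\id_\T)$ used before, I would take the \emph{real} one-parameter subgroups
\begin{equation*}
R_n=\R\.(\tfrac 1 n,\iota)\in\cR(G),\qquad G=\R\times\bR,
\end{equation*}
where $\iota\in\bR$ is the distinguished generator (the identity character of $\R_d$). The annihilator computation is the continuous analogue of Equation~(\ref{eq:annih}): $(r,s)\in R_n^\perp$ iff $(r,s)\big(t\.(\tfrac1n,\iota)\big)=0$ for all $t\in\R$, i.e.\ iff $\overline{\tfrac{rt}{n}}-\iota(ts)=\overline{\tfrac{rt}{n}-ts}=0$ for all $t\in\R$, which (since this must hold for all real $t$, not merely integer $t$) forces $\tfrac r n - s=0$, i.e.\ $s=\tfrac r n$. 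Hence $R_n^\perp=\{(r,\tfrac r n):r\in\R\}$, a copy of $\R$ sitting as a graph in $\R\times\R_d$ — but note it is \emph{not} a one-parameter subgroup of $\hat G$ topologically, since the second coordinate carries the discrete topology; it is a closed subgroup isomorphic to $\Z^{(\text{continuum})}$-free but that does not matter, only its Chabauty limit does.

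Next I would show $\lim_{n\in\N}R_n^\perp=\{0\}$ in $\cg(\hat G)$, using the convergence criterion Lemma~\ref{convergence-trivial-subgp}. Take any cofinal net $(n_j)_{j\in J}$ in $\N$ and a convergent net $(r_j,s_j)_{j\in J}$ with $(r_j,s_j)\in R_{n_j}^\perp$, say $(r_j,s_j)\to(r,s)$ in $\R\times\R_d$. Convergence in the discrete factor $\R_d$ means $s_j$ is eventually equal to its limit $s$; convergence in the $\R$ factor means $r_j\to r$. But $(r_j,s_j)\in R_{n_j}^\perp$ says $s_j=\tfrac{r_j}{n_j}\to 0$ since $r_j$ is bounded and $n_j\to\infty$. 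An eventually constant net converging to $0$ is eventually $0$, so eventually $s_j=0$, hence $r_j=n_j s_j=0$ eventually, so $r=0$ and $s=0$. Thus the only subnet-limit is $0$, and Lemma~\ref{convergence-trivial-subgp} gives $\lim_n R_n^\perp=\{0\}$.

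Finally, applying Pontryagin--Chabauty Duality (Proposition~\ref{Pon-Cha-duality}), which says $H\mapsto H^\perp$ is a homeomorphism $\cg(G)\to\cg(\hat G)$, from $\lim_n R_n^\perp=\{0\}=G^\perp$ we conclude $\lim_{n\in\N}R_n=G$ in $\cg(G)$. Since each $R_n\in\cR(G)$, this exhibits $G=\R\times\bR$ as numerally approximable, in fact by a sequence of real subgroups. I do not anticipate a genuine obstacle here: the one subtlety, and the only place the argument differs substantively from the integral case, is the discreteness of the $\R_d$ factor in $\hat G$ — one must remember that convergence there is eventual equality, which is precisely what makes the annihilator computation give $s=\tfrac rn$ (rather than merely $\tfrac rn - s\in\Z$) and what makes the eventually-constant sequence $s_j$ collapse to $0$. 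Once that is kept straight, the argument is a routine transcription of the First Key Lemma's proof.
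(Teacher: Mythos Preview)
Your proposal is correct and follows essentially the same route as the paper: the paper defines $R_n$ as the graph of $n\cdot f$ for the canonical one-parameter subgroup $f\colon\R\to\bR$, invokes a small lemma saying the annihilator of a graph is the graph of the adjoint to obtain $R_n^\perp=\{(r,r/n):r\in\R\}\subseteq\R\times\R_d$, and then runs exactly your convergence argument via Lemma~\ref{convergence-trivial-subgp} and Pontryagin--Chabauty duality. Your $R_n=\R\.(\tfrac1n,\iota)$ is the same subgroup in a different parametrization (set $r=t/n$); the only cosmetic caveat is that the notation ``$\R\.(\tfrac1n,\iota)$'' presupposes an $\R$-action on $\bR$, which you implicitly supply by reading $t\.\iota$ as the character $s\mapsto\overline{ts}$---exactly the paper's $f(t)$.
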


Before we prove this Second  Key Lemma, we review the duality aspects of
the present situation.

 From Lemma \ref{dual-group-cartesian-product} we recall
that the dual $\hat G$ of $G$ may be identified with $\R\times\R_d$
where, as before we identify $\hat \R$ and $\R$.
 Let $f\colon\R\to\bR$ the canonical one-parameter
subgroup of $\bR$, namely the dual of $\id_\R\colon\R_d\to\R$.
For each natural number $n\in\N$, the morphism
$n\.f$ defined by $(n\.f)(r)=n\.f(r)$ (in the additively
written) abelian group $\bR$) is the dual of the
morphism $n\.\id_\R\colon \R_d\to\R$ which is
just multiplication by $n$.
 We let  the subgroup $R_n\le \R\times\bR$
be the graph of $n\.f$, that is,
$$R_n= \set{(r,n\.f(r))}{r\in\R}.$$
Clearly, $R_n$ is isomorphic  to $\R$, since
the projection of a graph of a morphism onto its domain
is always an isomorphism.

 We claim that $G=\lim_n R_n$ in $\cg(G)$; this claim
will finish the proof of the Second Key Lemma.
 We shall prove the claim
by showing that $\lim_n R_n^\perp=\{(0,0)\}$ in
$\cg(\hat G)$.
This will prove the claim by Pontryagin-Chabauty-Duality
in the form of Proposition \ref{Pon-Cha-duality}.

We need information on the annihilator of a graph:

\begin{lem}\label{GraphAdjoint} Let $A$ and $B$ be locally compact groups and
$f\colon A\to B$ a morphism. Define $\Gamma=\set{(a,f(a))}{a\in A}
\subseteq A\times B$ to denote the graph of $f$. We identify
$(A\times B)\hat{\phantom w}$
with $\hat A\times \hat B$ (via $\rho$ as in Lemma \ref{dual-group-cartesian-product}).
Then the graph $\set{(\hat f(b),b)}{b\in B}\subseteq A\times B$ of the
adjoint morphism $\hat f\colon \hat B\to \hat A$
is the annihilator $\Gamma^\perp$ of $\Gamma$.
\end{lem}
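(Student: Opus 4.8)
The plan is to unwind both sides of the asserted equality directly from the definitions. By Lemma \ref{dual-group-cartesian-product} we regard $(A\times B)\hat{\phantom w}$ as $\hat A\times\hat B$ via $\phi$, so that a pair $(\chi_A,\chi_B)\in\hat A\times\hat B$ corresponds to the character $(a,b)\mapsto\chi_A(a)-\chi_B(b)$. Hence $(\chi_A,\chi_B)$ lies in $\Gamma^\perp$ if and only if this character vanishes on every point of the graph, i.e.\ $\chi_A(a)-\chi_B(f(a))=0$ in $\R/\Z$ for all $a\in A$. This first step is purely a matter of spelling out what ``annihilator of $\Gamma$'' means under the identification $\phi$.

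Next I would bring in the definition of the adjoint (dual) morphism: $\hat f\colon\hat B\to\hat A$ is given by $\hat f(\chi)=\chi\circ f$, so that $\chi_B(f(a))=\bigl(\hat f(\chi_B)\bigr)(a)$ for every $a\in A$. Substituting this into the condition obtained in the first step turns it into $\chi_A(a)=\bigl(\hat f(\chi_B)\bigr)(a)$ for all $a\in A$, which is exactly the equality of characters $\chi_A=\hat f(\chi_B)$ in $\hat A$. Therefore $(\chi_A,\chi_B)\in\Gamma^\perp$ precisely when $\chi_A=\hat f(\chi_B)$, and this says that $(\chi_A,\chi_B)$ is a point of the graph $\set{(\hat f(\chi),\chi)}{\chi\in\hat B}$ of $\hat f$. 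Reading the chain of equivalences both ways gives the set equality $\Gamma^\perp=\set{(\hat f(\chi),\chi)}{\chi\in\hat B}$.

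I do not expect a genuine obstacle here; the statement is a bookkeeping identity and the argument above is complete modulo routine verification. The only two points that deserve a word of care are: (i) the minus sign in $\phi(\chi_A,\chi_B)(a,b)=\chi_A(a)-\chi_B(b)$ is harmless, because we are testing when this quantity is \emph{zero}, so $\chi_A(a)-\chi_B(f(a))=0$ is equivalent to $\chi_A(a)=\chi_B(f(a))$; and (ii) the graph of the continuous homomorphism $\hat f$ is automatically a closed subgroup of $\hat A\times\hat B$, so it is legitimate to compare it, as an element of $\cg(\hat A\times\hat B)$, with the annihilator $\Gamma^\perp$, which is likewise always closed.
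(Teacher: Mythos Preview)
Your argument is correct and follows exactly the same route as the paper's proof: unwind the annihilator condition via the identification $\phi$, recognize $\chi_B\circ f=\hat f(\chi_B)$, and conclude $\chi_A=\hat f(\chi_B)$. The paper's proof is just a one-line compression of your computation, and your remarks about the minus sign and closedness are harmless extras.
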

\begin{proof} An element $(\chi_A,\chi_B)\in (A\times B)^\times$ is in
$\Gamma^\perp$ if  and  only  if, for any $a\in A$,
$$0= (\chi_A,\chi_B)(a,f(a))
=\chi_A(a)-(\chi_B\circ f)(a)=(\chi_A-\hat f(\chi_B))(a),$$
that is $\chi_A=\hat f(\chi_B)$.
\end{proof}

Applying this lemma to the graph $R_n$ we see that
\begin{equation} \label{annihilation}
 R_n^\perp=\{(nr,r):r\in\R\}=
\{(r,\frac r n):r\in\R\}
\subseteq \R\times\R_d.\end{equation}
Note that $R_n^\perp=\graph(r\mapsto \frac r n)$.

\begin{lem} \label{l:null-convergence-2} We  have $\lim_n R_n^\perp=\{(0,0)\}$ in $\cg(\hat G)$.
\end{lem}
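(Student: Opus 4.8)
The plan is to apply Lemma \ref{convergence-trivial-subgp} to the sequence $(R_n^\perp)_{n\in\N}$ of closed subgroups of $\hat G = \R\times\R_d$, exactly as was done for $Z_n^\perp$ in Lemma \ref{l:null-convergence}. Concretely, by \eqref{annihilation} we have $R_n^\perp = \set{(r,\tfrac r n)}{r\in\R} \subseteq \R\times\R_d$, so a typical element of $R_n^\perp$ is determined by a single real parameter $r$. To invoke Lemma \ref{convergence-trivial-subgp}(a)$\Rightarrow$(b), I would take a cofinal subnet $(n_j)_{j\in J}$ of $\N$ and a convergent net $\big((r_j, s_j)\big)_{j\in J}$ with $(r_j,s_j)\in R_{n_j}^\perp$ for all $j$, say with limit $(r,s)$ in $\R\times\R_d$, and show $(r,s)=(0,0)$.

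The key point is that the topology on the second factor $\R_d$ is \emph{discrete}, so convergence $s_j\to s$ in $\R_d$ forces $s_j = s$ eventually, i.e. there is $j_0\in J$ with $s_j = s$ for all $j\ge j_0$. On the other hand, membership in $R_{n_j}^\perp$ says $s_j = r_j / n_j$, hence for $j\ge j_0$ we have $r_j = n_j\cdot s$. Now the first-coordinate convergence $r_j\to r$ in the ordinary topology of $\R$, together with $n_j\to\infty$, forces $s=0$: if $s\ne 0$ then $|r_j| = n_j\,|s|\to\infty$, contradicting convergence of $(r_j)$. Hence $s=0$, so $\overline s$-type obstructions vanish, and then $r_j = n_j\cdot 0 = 0$ for all $j\ge j_0$, whence $r=\lim_j r_j = 0$. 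Thus $(r,s)=(0,0)$, condition (a) of Lemma \ref{convergence-trivial-subgp} holds, and therefore $\lim_{n\in\N} R_n^\perp = \{(0,0)\}$ in $\cg(\hat G)$, which is what we had to show.

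I do not anticipate a genuine obstacle here: the argument is the ``real'' mirror image of Lemma \ref{l:null-convergence}, and is in fact slightly cleaner, because the second coordinate lives in the discrete group $\R_d$ rather than in $\T_d$, so there is no need to track cosets modulo $\Z$ — discreteness alone pins down $s_j$ eventually. The only mild care needed is to be explicit that the two coordinates carry different topologies (the usual topology on $\hat\R=\R$, the discrete topology on $\R_d$) when passing to the limit, and to use $n_j\to\infty$ (which is legitimate since the subnet is cofinal in $\N$) to kill the surviving real parameter.
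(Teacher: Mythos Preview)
Your proof is correct and follows essentially the same route as the paper's own argument: invoke Lemma~\ref{convergence-trivial-subgp}, use discreteness of $\R_d$ to make $s_j$ eventually constant, then use $r_j=n_j s$ together with $n_j\to\infty$ and convergence of $r_j$ in $\R$ to force $s=0$ and hence $r=0$. The only differences are cosmetic (the paper names the eventual value $t$ rather than $s$), so there is nothing to add.
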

\begin{proof} As in the proof of Lemma \ref{l:null-convergence} we invoke
Lemma \ref{convergence-trivial-subgp} and consider
a  net $(n_j)_{j\in J}$ cofinal in $\N$ such that
$(r_j,s_j)_{j\in J}$ is a convergent net in $\hat G=\R\times\R_d$
such that according to Equation (\ref{annihilation}) we have
\begin{itemize}
\item[(a)]  $(r_j,s_j)\in R_{n_j}^\perp=\{(r,\frac r n_j): r\in\R\}$
 for all $j\in J$, and
\item[(b)] $(r,s)=\lim_{j\in J} (r_j,s_j)$ in $\hat G=\R\times\R_d$.
\end{itemize}
We must show that this implies $r=s=0$; then
Lemma \ref{convergence-trivial-subgp} completes the proof of
the lemma.

Now (a) implies $s_j=r_j/n_j$ for all $j\in J$, and (b) yields,
firstly, that $r=\lim_j r_j$ in $\R$ and, secondly, that in view
of the discreteness of $\R_d$ the net of the $r_j/n_j=s_j$ in $\R_d$ are
eventually constant, say $=t$ for $j>j_0$ for some $j_0$.
For these $j$ we now have $r_j=tn_j$, and so $r=\lim_j tn_j$.
Since the $n_j$ increase beyond all bounds, this implies $r=t=0$,
and so $r_j=0$ for $j>j_0$. Accordingly, $s_j=0$ for $j>j_0$,
and thus $s=\lim_j s_j=0$ as well. This completes the proof.
\end{proof}

By our earlier remarks, this shows $\lim_n R_n=G$ and thus completes
the proof of the Second Key Lemma \ref{l:key-lemma-2} and thereby also
completes the proof of Theorem \ref{th:sufficient-numeral}.

We can summarize the material on numerally approximable groups as follows:

\msk

\begin{thmB} A locally compact group $G$ is
numerally approximable if and only if it is of the form $G\cong\R\times C$
with a compact connected abelian group $C$.
\end{thmB}

\msk

By Pontryagin Duality, the groups  $C$ range through a class equivalent to
the class of all torsion free abelian (discrete) groups.

\msk

A comparison of Main Theorems A and B allow us to draw the following
conclusion:

\begin{cor} \label{c:final} If a locally compact group is
numerally approximable, then it is integrally approximable, while the reverse
is not generally true.
\end{cor}

\begin{rem} Locally compact groups of the form $\R\times C$ for a
compact connected group $C$ have been called {\it two-ended}
by Freudenthal (\cite{freud}).
\end{rem}

\section{An alternate proof  of Corollary \ref{c:final}}

Corollary 6.10 was proved above in a roundabout fashion. We therefore present
a different direct way to arrive at the same conclusion.

\subsection{Iterated Limit Theorem}

The Iterated Limit Theorem for nets deals with  the following
data:

Let $I$ be a directed set and $(J_i)_{i\in I}$ a family of
directed sets indexed by $I$. Assume that for each $i\in I$
we are given a converging net $(x_{ij})_{j\in J_i}$ in a topological
space $X$.
The limits $r_i=\lim_{j\in J_i}x_{ij}$, $i\in I$ form a net $(r_i)_{i\in I}$
which may or may not converge. If it does, we have what is sometimes called
an iterated limit
$$ r=\lim_{i\in I}\lim_{j\in J_i} x_{ij}.$$

There is no harm in our assuming that the sets $J_i$ are pairwise disjoint.
(If necessary we can replace each $J_i$ by $\{i\}\times J_i$!). Now
$D\defi\dot\bigcup_{k\in I}J_k$ is a disjoint union of the fibers $J_i$
of the fibration $\pi\colon D\to I$, $\pi(d)=i$ iff $d\in J_i$.
A function $\sigma\colon I\to D$ is a {\it  section} if
$\pi\circ \sigma=\id_I$, the identity function of $I$. The
set of sections is  $\prod_{i\in I} J_i$.
The set $D$ is partially ordered lexicographically:
$$d\le d'\hbox{ if either }\pi(d)< \pi(i'),
\hbox{ or }\pi(i)=\pi(i')\hbox{ and  }d\le d'
\hbox{ in }J_{\pi(d)}.\leqno(1)$$

These data taken together result in
a net $(x_d)_{d\in D}$ which converges fiberwise,
and the limits
along the fibers form a convergent net.

\ssk

In applications such as ours it is desirable to construct from the given
data a subnet
$(y_p)_{p\in P}$   of the net $(x_d)_{d\in D}$ in $X$
for some directed set $P$ such that
$$ r=\lim_{p\in P} y_p.$$

\begin{lem} Let $P=I\times\prod_{k\in I}J_k$. For
$p=(i,\sigma)\in P$
we define $d_p =\sigma(i)\in D$.
Then
$p\mapsto d_p:P\to D$ is a cofinal function between directed sets,
that is, for each $d\in D$ there is a $p_0\in P$ such that $p_0\le p$ implies
$d\le d_p$.
\end{lem}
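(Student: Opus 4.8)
The plan is to verify directly that the map $p \mapsto d_p$ is cofinal by unwinding the definitions of the lexicographic order on $D$ and the product order on $P$. Given a target element $d \in D$, I need to produce a $p_0 \in P$ such that $p_0 \le p$ forces $d \le d_p$ in $D$. The natural choice is $p_0 = (i_0, \sigma_0)$ where $i_0 = \pi(d)$ is the fiber containing $d$, and $\sigma_0$ is any section of $\pi\colon D \to I$ whose value at $i_0$ dominates $d$, i.e.\ $\sigma_0(i_0) \ge d$ in the directed set $J_{i_0}$ (such a value exists because $J_{i_0}$ is directed, and $\sigma_0$ can be extended to a section by picking arbitrary values on the other fibers).

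**Next I would** carry out the verification. Suppose $p = (i,\sigma) \ge p_0 = (i_0,\sigma_0)$ in $P = I \times \prod_{k\in I} J_k$. By definition of the product order this means $i \ge i_0$ in $I$ and $\sigma \ge \sigma_0$ in $\prod_{k\in I} J_k$, the latter meaning $\sigma(k) \ge \sigma_0(k)$ in $J_k$ for every $k \in I$. Now consider $d_p = \sigma(i)$, which lies in the fiber $J_i$, so $\pi(d_p) = i$. There are two cases. If $i > i_0 = \pi(d)$, then by clause (1) defining the order on $D$ we immediately get $d < d_p$, hence $d \le d_p$. If $i = i_0$, then $d_p = \sigma(i_0) \ge \sigma_0(i_0) \ge d$ in $J_{i_0}$, so $\pi(d) = \pi(d_p)$ and $d \le d_p$ in $J_{\pi(d)}$, which is exactly the second alternative in clause (1), giving $d \le d_p$. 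In either case $d \le d_p$, which is what cofinality requires.

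**The main thing to be careful about** is a small subtlety rather than a genuine obstacle: the definition of the order on $D$ as printed uses the variables $i$ and $i'$ in clause (1) where $d$ and $d'$ are presumably meant (writing $\pi(d) < \pi(i')$ etc.), so I would read it in the evident corrected form $d \le d'$ iff either $\pi(d) < \pi(d')$, or $\pi(d) = \pi(d')$ and $d \le d'$ in the fiber $J_{\pi(d)}$. One should also confirm that $P$ is indeed a directed set (it is, being a finite product of directed sets under the coordinatewise order — $I$ is directed and each $J_k$ is directed, and $\prod_{k\in I} J_k$ inherits directedness pointwise), so that "cofinal function between directed sets" is a meaningful assertion. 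With those conventions fixed, the proof is the short two-case argument above and requires nothing beyond the directedness of the index sets.
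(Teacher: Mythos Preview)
Your proof is correct and follows essentially the same approach as the paper: choose $p_0=(\pi(d),\sigma_0)$ with $\sigma_0(\pi(d))\ge d$ and arbitrary values elsewhere, then split into the two lexicographic cases $i>\pi(d)$ and $i=\pi(d)$. The only cosmetic difference is that the paper simply takes $\sigma_0(\pi(d))=d$ (which suffices, since $d\in J_{\pi(d)}$), whereas you allow $\sigma_0(\pi(d))\ge d$; your added remarks on the typo in the order clause and on the directedness of $P$ are accurate and helpful.
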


\begin{proof}  Let $d\in D$, that is, $d\in J_{\pi(d)}$.
 We have to find a
$p_0=(i,\sigma)\in P$ such that $p_0\le p=(k,\tau)$ in $P$
implies $\sigma(i)\le \tau(k)$ in $D$.

Now for $k\ne i$ use the Axiom of Choice to  select an arbitrary $j_k\in J_k$.
Define $\sigma\in\prod_{i'\in I}J_{i'}$ by
$$\sigma(k)=\begin{cases}\sigma(\pi(d)), & \mbox{if }  k=\pi(i)\\
                j_k. & \mbox{otherwise.}
              \end{cases}
$$
Set $p_0=(\pi(d),\sigma)$. Now if $p_0\le p=(k,\tau)$ with
$\pi(d)\le k$ and $\sigma\le \tau$
we claim that $d=\sigma(\pi(d)))\le d_p=\tau(k))$. Indeed,
 if $\pi(d)<k$ then $d\le \tau(k)=d_p$ by the order on $D$, and if
$i=k$ then $d=\sigma(\pi(d)) \le \tau(\pi(d))$ since $\sigma\le \tau$.
This completes the proof.
\end{proof}

\msk

The subnet is constructed so as to be convergent if the iterated
limit exists and to have the same limit.
This is the so-called
{\it Iterated Limit Theorem} in whose formulation we use the
notation introduced above.
\begin{thm}\label{IteratedLimitTheorem}
Assume that the  net $(x_d)_{d\in D}$ converges fiberwise
and that the fiberwise limits converge as well.
Then it has a subnet $(y_p)_{p\in P}$,
$y_p=x_{d_p}$, $d_{i,\sigma}=\sigma(i)$,
such that
$$\lim_{p\in P}y_p=\lim_{i\in I}\lim_{d\in J_i}x_d.$$
\end{thm}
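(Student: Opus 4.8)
The plan is to verify directly, from the definition of convergence of a net in a topological space, that the subnet $(y_p)_{p\in P}$ produced by the preceding lemma converges to $r := \lim_{i\in I}\lim_{d\in J_i} x_d$. Let $N$ be an arbitrary open neighborhood of $r$ in $X$. I must exhibit a $p_1\in P$ such that $p\ge p_1$ implies $y_p\in N$.

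First I would use the outer convergence: since $r=\lim_{i\in I} r_i$ with $r_i=\lim_{d\in J_i}x_d$, pick an open neighborhood $N'$ of $r$ with $\overline{N'}\subseteq N$ if one wishes (or simply work with two nested neighborhoods), and choose $i_0\in I$ so that $i\ge i_0$ implies $r_i\in N'$. Next, for the fiberwise convergence: for each such $i$ there is a $j(i)\in J_i$ with $d\ge j(i)$ in $J_i$ implying $x_d\in N'$ — here a single nested neighborhood is enough if I just ask for $x_d\in N$ for $d$ large in $J_i$, using that $r_i\in N$ and $N$ open. The delicate point is that I need this simultaneously for the one index $i$ that will actually be read off by a given section, so I package the choices: using the Axiom of Choice define $\sigma_0\in\prod_{k\in I}J_k$ by letting $\sigma_0(k)$ be (for $k\ge i_0$) an element of $J_k$ past which $x_d\in N$, and arbitrary otherwise. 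Set $p_1=(i_0,\sigma_0)\in P$.

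Now suppose $p=(k,\tau)\ge p_1$, i.e. $k\ge i_0$ in $I$ and $\sigma_0\le\tau$ in $\prod_{k\in I}J_k$ (coordinatewise). Then $d_p=\tau(k)$, and since $k\ge i_0$ and $\tau(k)\ge\sigma_0(k)$ with $\sigma_0(k)$ chosen past the threshold for fiber $J_k$, we get $y_p=x_{d_p}=x_{\tau(k)}\in N$. This is exactly what is needed, so $\lim_{p\in P}y_p=r$. I would also remark that by the cofinality statement of the preceding lemma $(y_p)_{p\in P}$ is genuinely a subnet of $(x_d)_{d\in D}$, so the theorem is a bona fide iterated-limit statement and not merely a statement about a freshly constructed net.

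The main obstacle is bookkeeping with the two interleaved directed sets rather than anything deep: one must be careful that the threshold $j(k)$ for the fiber $J_k$ depends on $k$, and that the section $\sigma_0$ is required to dominate these thresholds only for $k\ge i_0$ (which is all that the order on $P$ and the formula $d_{(k,\tau)}=\tau(k)$ ever exposes). Choosing $\sigma_0$ by the Axiom of Choice on the remaining coordinates, exactly as in the lemma above, handles this cleanly. No compactness or metrizability is used; the argument is purely formal and works in an arbitrary topological space $X$.
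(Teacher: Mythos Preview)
Your argument is correct. The paper itself does not give a proof of this theorem at all: its ``proof'' consists solely of the citation ``For a proof see \cite{kelley}, p.~69.'' What you have written is essentially the standard direct verification (and is in fact the argument one finds in Kelley): use the outer convergence to find $i_0$ with $r_i\in N$ for $i\ge i_0$, then for each such $i$ use openness of $N$ and the fiberwise convergence to choose a threshold $\sigma_0(i)\in J_i$, assemble these (via choice on the remaining coordinates) into a section $\sigma_0$, and take $p_1=(i_0,\sigma_0)$. Your check that $p=(k,\tau)\ge p_1$ forces $x_{\tau(k)}\in N$ is exactly right.

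One cosmetic remark: the aside about picking $N'$ with $\overline{N'}\subseteq N$ is unnecessary and, as you yourself note, not used; in an arbitrary topological space such an $N'$ need not exist, so it is cleaner to drop that sentence entirely and work with $N$ alone throughout. The actual argument you carry out does precisely this, so nothing is wrong---just streamline the exposition.
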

\begin{proof}
For a proof see \cite{kelley}, p.~69.
\end{proof}
In this remarkable fact about nets and their convergence it
is noteworthy that the index set $P$ is vastly larger
than the already large index set $D$.

\msk

A frequent special case is that the index sets $J_i$ all agree with
one and the same  index set $J$ in which case we have
$D=I\times J$,  $J_i=\{i\}\times J$,  and $P=I\times J^I$.

\msk

Now  we  present an alternate proof  of Corollary \ref{c:final}.

\begin{proof}  Let $G$  be a
numerally aproximable locally compact group  and  let $(R_j)_{j\in J}$  be  a net such  that
$G=\lim_{j\in J} R_j$ and $R_j\cong \R$.  For each $j\in J$ we have $R_j=\lim_{n\in \N} Z_{(j,n)}$ for a
sequence $Z_{(j, n)}\cong \Z$. Therefore
\begin{equation*}
G=\lim_{j\in J}\lim_{n\in\N} Z_{(j,n)}.
\end{equation*}
Then there exists a subnet $(Z_p)_{p\in P}$
of the net $(Z_{(j, n)})_{(j,n)\in J\times \N}$
such that
\begin{equation*}
G=\lim_{p\in P} Z_{p}.
\end{equation*}
by the
Theorem of the Iterated Limit.
\end{proof}

A review of the iterated limit theorem together with an application of it in
the context of our present topic may be of independent interest.

\bibliographystyle{amsplain}

\end{document}